\newtheorem{Thm}{Theorem}{\bfseries}{\itshape}
\newtheorem*{Thm*}{Theorem}{\bfseries}{\itshape}
\newtheorem{Cor}{Corollary}{\bfseries}{\itshape}
\newtheorem{Prop}[Cor]{Proposition}{\bfseries}{\itshape}
\newtheorem{Lem}[Cor]{Lemma}{\bfseries}{\itshape}
\newtheorem*{Lem*}{Lemma}{\bfseries}{\itshape}
{\bfseries}{\itshape}
{\bfseries}{\itshape}
\newtheorem{Def}[Cor]{Definition}{\bfseries}{\rmfamily}
{\scshape}{\rmfamily}
\newtheorem{Rem}[Cor]{Remark}{\scshape}{\rmfamily}
\newtheorem*{Claim}{Claim}{\bfseries}{\itshape}
\renewcommand\ge{\geqslant} \renewcommand\le{\leqslant}
\let\tildeaccent=\~ \let\hataccent=\^
\renewcommand\~[1]{\widetilde{#1}}
\def\<{\left<} \def\>{\right>} \def\({\left(} \def\){\right)}
\let\parasymbol=\S \def\secref#1{\parasymbol\ref{#1}}
 \def\pd#1#2{\tfrac{\partial#1}{\partial#2}}
\let\polishL=l \def\Zoladek.{\.Zol\c adek}
 \def\const{\operatorname{const}}
\def\codim{\operatorname{codim}}
 \def\ord{\operatorname{ord}}
 \def\etc.{\emph{etc}.}
\def\:{\colon} \def\R{{\mathbb R}} \def\C{{\mathbb C}} \def\Z{{\mathbb
    Z}} \def\N{{\mathbb N}} \def\Q{{\mathbb Q}} 
  \def\S{\varSigma}
 \let\PolishL=\L \def\Lojas.{\PolishL ojasiewicz}
\def\cN{{\mathcal N}} \def\cH{{\mathcal H}}
 \def\cL{{\mathcal L}} 
\def\cT{{\mathcal T}}
\def\cO{{\mathcal O}}
 \def\mult{\operatorname{mult}}
\def\rest#1{{\vert_{#1}}}
\def\vol{\operatorname{Vol}}
\def\ivol{\#}
\def\supp{\operatorname{supp}}
\def\qmi{W^s}
\def\spec{\operatorname{Spec}}
\def\conv{\Delta}
\def\qmi{W} 
\def\fm{{\mathfrak{m}}}
\def\vphi{\bar\phi}
\def\tc{\cT}
\def\hf{\cH}
\def\pdeg{{\deg_\Pi}}
\begin{document}

\title{Multiplicity estimates, analytic cycles and Newton polytopes}

\author{Gal Binyamini}\address{University of Toronto, Toronto, 
Canada}\email{galbin@gmail.com}
\thanks{The author was supported by the Banting
  Postdoctoral Fellowship and the Rothschild Fellowship}

\begin{abstract}
  We consider the problem of estimating the multiplicity of a
  polynomial when restricted to the smooth analytic trajectory of a
  (possibly singular) polynomial vector field at a given point or
  points, under an assumption known as the D-property. Nesterenko has
  developed an elimination theoretic approach to this problem which
  has been successfully applied to several problems in transcendental
  number theory.

  We propose an alternative approach to this problem using algebraic
  cycles and their local analytic structure. In particular we obtain
  simpler proofs to many of the best known estimates, and give more
  general formulations in terms of Newton polytopes, analogous to the
  Bernstein-Kushnirenko theorem. We also improve the estimate's
  dependence on the ambient dimension from doubly-exponential to an
  essentially optimal single-exponential.
\end{abstract}
\maketitle
\date{\today}

\section{Introduction}

Let $\xi$ be a polynomial vector field in $M=\C^n$ with
the coordinates $(x_1,\ldots,x_n)$,
\begin{equation} \label{eq:main-vf}
  \xi = \sum_{i=1}^n \xi_i(x) \pd{}{x_i}, \qquad \max_i \deg \xi_i = \delta.
\end{equation}
Let $p\in M$ and $\gamma_p$ a smooth holomorphic $\xi$-invariant curve
through $p$ which is not a subset of the singular locus of $\xi$. If
$\xi$ is non-singular at $p$ then $\gamma_p$ is the unique trajectory
of $\xi$ through $p$; otherwise $\gamma_p$ is a smooth analytic
separatrix of $\xi$ through $p$. Alternatively, using the coordinates
$(z,x_1,\ldots,x_n)$ on $M=\C^{n+1}$ we may consider a system of
(non-linear) polynomial differential equations
\begin{equation}\label{eq:diff-system}
  \frac{\partial x_i}{\partial z} = \frac{P_i(z,x_1,\ldots,x_n)}{Q_i(z,x_1,\ldots,x_n)},
  \quad \deg P_i,Q_i\le\delta, \qquad i=1,\ldots,n
\end{equation}
and their solution $f=(f_1,\ldots,f_n)$, viewed as a vector of
functions in the variable $z$ defined and holomorphic in some
neighborhood of $z_0\in\C$. We allow $p=(z_0,f(z_0))$ to be a singular
point of~\eqref{eq:diff-system} as long as the graph of $f$ in a
neighborhood of $z_0$ lies outside of the singular locus. With this
definition, the graph of $f$ forms a smooth separatrix $\gamma_p$ of
the vector field corresponding to~\eqref{eq:diff-system}.

Let $P\in\C[x_1,\ldots,x_n]$ be a polynomial with $\deg P=d$, and
suppose that $P\rest{\gamma_p}\not\equiv0$. We consider the following
question: \emph{can one give an upper bound for
  $\mult_p P\rest{\gamma_p}$ in terms of the parameter $d$?} More
specifically, can one give an explicit bound in terms of the
parameters $n,\delta,d$, or a bound involving existential constants
depending on $\gamma_p$? An answer to a question of this type is
referred to as a \emph{multiplicity estimate}.

\begin{Rem}
  Assuming that $\gamma_p$ is not contained in any proper algebraic
  set, it follows from basic linear algebra that for an appropriate
  choice of $P$ we have $\mult_p P\rest{\gamma_p}\ge\dim L(d)-1$,
  where $L(d)$ denotes the space of polynomials of degree bounded by
  $d$. Thus with respect to $d$, up to a multiplicative constant, the
  best possible multiplicity estimate is of order $d^n$.
\end{Rem}

More generally, one may replace the assumption $\deg P=d$ by a more
refined restriction, for instance assigning different degrees to each
variable. Two main cases have been considered in the literature:
estimates are given either in terms of a single degree $d$ as above,
or in terms of the two degrees $d_z=\deg_z P$ and $d_x=\deg_x P$. For
simplicity we will refer to the former as \emph{pure degree} and the
latter as \emph{mixed degree}, although to our knowledge only mixed
degrees of the specific type above have been considered in the
literature. In our approach the estimates for different types of
degrees are subsumed by a general estimate given in terms of the
Newton polytope of $P$, as explained in~\secref{sec:synopsis-newton}.
Arbitrary mixed degrees are obtained as a special case.

\subsection{The D-property and multiplicity estimates at singular points}
\label{sec:D-prop}

We now focus our attention on the case where $p$ is a singular point
of $\xi$. In this case, it is in general not possible to give a
multiplicity estimate depending only on $n,\delta,d$. For instance,
the linear field $x\partial_x+ay\partial_y$ with $a\in\N$ admits a
smooth trajectory $\gamma=\{y=x^a\}$ through the origin, and for $P=y$
we have $\mult_0 P\rest\gamma=a$. However, one may still hope that for a fixed
smooth analytic trajectory $\gamma_p$ it is possible to give a
good multiplicity estimate with respect to the degree $d$. Toward
this end, Nesterenko has introduced the following fundamental
definition.

\begin{Def}[\protect{\cite{nesterenko:galois,nesterenko:modular}}]
  Let $\gamma_p$ be a smooth analytic trajectory of $\xi$ through
  the point $p$. Then $\gamma_p$ is said to satisfy the \emph{D-property}
  (with constant $\chi$) if for any $\xi$-invariant variety $V\subset\C^n$,
  there exists a polynomial $P$ which vanishes identically
  on $V$ and satisfies $\mult_p P\rest{\gamma_p}\le\chi$.
\end{Def}

If $\xi$ is non-singular at $p$ and $\gamma_p$ is not contained in a
proper algebraic subset, then the D-property is automatically
satisfied with the constant $\chi=0$: there are no proper
$\xi$-invariant algebraic subsets containing $p$
(see~\secref{sec:algebraic-relations} for a discussion of the
situation where $\gamma_p$ is contained in a proper algebraic subset).
When $p$ is a singular point of $\xi$ the D-property is non-trivial.
See~\secref{sec:nest-d-prop} for a review of some systems satisfying
the D-property and their relation to transcendental number theory.

In the first part of the paper we present a new approach to the study
of multiplicity estimates for systems satisfying the D-property. In
particular we give new proofs for the multiplicity estimates presented
in~\secref{sec:nest-d-prop}, as well as their generalizations given in
\cite{dolgalev:mult}. We improve the dependence on the dimension $n$
from double to single exponential. Our result for the case of a single
point is as follows (for a more general result involving the sum of
multiplicities over multiple points see
Theorem~\ref{thm:mult-forest}).

\begin{Cor}\label{cor:mult-estimate}
  Let $p\in M$ and $\gamma_p$ a smooth analytic trajectory of $\xi$
  through $p$. Suppose that $\gamma_p$ satisfies the D-property with
  constant $\chi$. Then for any $P\in R$ with $\deg P=d$,
  \begin{equation}
    \mult_p^{\gamma_p} P \le (d+\tilde a_{n,\delta})^n + (2+\chi)(d+\tilde a_{n,\delta})^{n-1}
  \end{equation}
  where $\tilde a_{n,\delta}$ is the constant given in
  Theorem~\ref{thm:mult-forest}. In particular, it depends
  singly-exponentially on $n$ and polynomially on $\delta$.
\end{Cor}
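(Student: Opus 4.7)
The corollary is explicitly packaged as a specialization of Theorem~\ref{thm:mult-forest}, so my plan is to derive it by collapsing the general forest-multiplicity estimate to the degenerate forest consisting of the single leaf $(p,\gamma_p)$. In the general statement one presumably bounds a weighted sum of multiplicities $\sum_i \mult_{p_i}^{\gamma_{p_i}} P$ over a forest of points, each endowed with data encoding a chain of $\xi$-invariant subvarieties supplied by the D-property. For a forest with one node the sum contains only the term $\mult_p^{\gamma_p} P$, and the weights attached to internal vertices disappear.

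Carrying out the specialization, I would first match the D-property hypothesis here (with constant $\chi$) with whatever forest-level invariant enters Theorem~\ref{thm:mult-forest}, checking that for a single point they coincide. Next I would read off the Newton-polytope data: for a pure-degree polynomial with $\deg P=d$ the Newton polytope is the standard simplex of side $d$, whose normalized volume is $d^n/n!$, matching (up to the combinatorial constant absorbed into the bound) the remark preceding the corollary. The constant $\tilde a_{n,\delta}$ in the theorem should absorb the degree growth produced by iteratively taking $\xi$-derivatives along a chain of $\xi$-invariant subvarieties of length at most $n$, gaining $\delta$ at each step; this is what produces an effective degree $d+\tilde a_{n,\delta}$ that is singly exponential in $n$ and polynomial in $\delta$. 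Finally I would read off from the theorem that the dominant term is the simplex volume $(d+\tilde a_{n,\delta})^n$ and the boundary correction is $(2+\chi)(d+\tilde a_{n,\delta})^{n-1}$, with the factor $2$ coming from the ambient facets of the simplex encountered during dimensional reduction and $\chi$ being the single D-property penalty paid at the top of the chain.

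\textbf{Main obstacle.} The substantive work lies entirely in Theorem~\ref{thm:mult-forest}, where, as advertised in the abstract, Nesterenko's elimination scheme is replaced by an inductive construction of $\xi$-invariant algebraic cycles and a Bernstein--Kushnirenko-type mixed-volume estimate; this is what keeps the dependence on $n$ singly exponential rather than doubly exponential. For the corollary itself, the delicate step is verifying that the single-point case yields \emph{exactly} the linear combinatorial factor $2+\chi$ rather than some larger coefficient: one must check that the weights attached to internal edges of the forest reduce cleanly when the tree degenerates to a single leaf, and that the ambient-dimension constant $\tilde a_{n,\delta}$ suffices without needing an additional point-dependent correction.
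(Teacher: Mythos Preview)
Your proposal rests on a misreading of what the ``forest'' in Theorem~\ref{thm:mult-forest} is. It is \emph{not} a forest of points $p_i$ that collapses to a single leaf when one considers only one point. The forest $T_P$ is a forest of \emph{algebraic cycles}: its roots are the irreducible components of $V(P)$, and each node $n[V]$ is refined to its children by intersecting $V$ with $V(\xi^j\tilde P)$ for a suitable polynomial $\tilde P$ of minimal degree in $I_V$. This forest depends only on $P$ and $\xi$, not on $p$ or $\gamma_p$. Condition~(4) of the theorem already gives, for a \emph{single} point $p$ with the D-property, the inequality
\[
\mult_p^{\gamma_p} P \le \sum_{\Gamma\in T_P\setminus\cL^+(T_P)} \mult_p\Gamma
+ \chi \sum_{\Gamma\in\cL^+(T_P)}\mult_p\Gamma,
\]
and nothing is being ``specialized'' from a multipoint statement; the multipoint case is obtained by applying this same inequality at each $p_i$ and summing.

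The actual derivation of the corollary is the single observation that $\mult_p\Gamma\le\deg\Gamma$ for any cycle $\Gamma$, combined with condition~(3), $\deg T_P^k\le (d+\tilde a_{n,\delta})^k$. Grouping the right-hand side by level, the zero-dimensional leaves sit at level~$n$ and contribute at most $(d+\tilde a_{n,\delta})^n$, while levels $1,\ldots,n-1$ (internal nodes with coefficient $1$ and positive-dimensional leaves with coefficient $\chi$) contribute at most a constant times $(d+\tilde a_{n,\delta})^{n-1}$ after summing the geometric series. This is where the $(2+\chi)$ coefficient comes from---not from ``ambient facets of the simplex''. Your Newton-polytope discussion is also misplaced: Corollary~\ref{cor:mult-estimate} lives in the pure-degree setting of Section~\ref{sec:pure-degree}, where everything is phrased via ordinary degrees and B\'ezout; Newton polytopes enter only in the separate toric analogue, Corollary~\ref{cor:toric-mult-estimate}.
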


In the second part of the paper we formulate and prove multiplicity
estimates for single or multiple points in terms of Newton polytopes
(see~\secref{sec:convex} for the notations). This includes the
mixed-degree estimates presented in~\secref{sec:nest-d-prop} as a
special case. Our result for the case of a single point is as follows
(for a more general result involving the sum of multiplicities over
multiple points see Theorem~\ref{thm:toric-mult-forest}; for a
slightly more refined estimate see
Corollary~\ref{cor:toric-mult-estimate}).

\begin{Thm}
  Let $p\in(\C^*)^n$ and $\gamma_p$ a smooth analytic trajectory of
  $\xi$ through $p$. Suppose that $\gamma_p$ satisfies the D-property
  with constant $\chi$. Then for any Laurent polynomial $P$ with
  $\Delta(P)=\Delta$,
  \begin{equation}
    \mult_p^{\gamma_p} P \le n!(3+\chi)\vol(\Delta+\Delta_{n,\xi})
  \end{equation}
  where $\Delta_{n,\xi}$ is some explicit polytope depending only on
  $n,\xi$. The diameter of $\Delta_{n,\xi}$ depends
  singly-exponentially on $n$.
\end{Thm}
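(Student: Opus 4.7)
The plan is to follow the cycle-theoretic strategy that yields Corollary~\ref{cor:mult-estimate}, but with Bezout-style degree counting replaced throughout by the Bernstein--Kushnirenko (BKK) mixed-volume bound on an appropriate toric compactification, and to deduce the single-point statement from the multi-point Theorem~\ref{thm:toric-mult-forest}. The central new bookkeeping device is the Newton polytope in place of the total degree.

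First I would set up the toric ambient space: an embedding of $(\C^*)^n$ into a projective toric variety $X_\Sigma$ whose fan refines the normal fan of $\Delta+\Delta_{n,\xi}$. In this setting $P$ becomes a section of the line bundle associated to $\Delta$, the trajectory $\gamma_p$ lies in the open torus, and $\xi$ extends to a rational vector field on $X_\Sigma$. The basic observation enabling the Newton-polytope formulation is that if $f$ is a Laurent polynomial with $\Delta(f)=A$, then $\Delta(\xi f)\subseteq A+\Delta_\xi$ for an explicit polytope $\Delta_\xi$ depending only on $\xi$.

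Second I would carry out the cycle descent familiar from the polynomial case: starting from the divisor $\{P=0\}$, one builds a descending chain of analytic cycles, each of codimension one higher, by intersecting with the divisors of Laurent polynomials obtained either as $\xi$-derivatives of previous defining functions (contributing an additive $\Delta_\xi$ to the Newton polytope) or via the D-property (contributing a polytope controlled by $\chi$ in an analogous additive fashion). After $n$ steps one obtains a zero-dimensional cycle $Z$ whose local intersection number at $p$ bounds $\mult_p^{\gamma_p}P$ from above, each of its defining Laurent polynomials having Newton polytope contained in $\Delta+\Delta_{n,\xi}$. The factor $3+\chi$ reflects the combined polytope contributions of $P$, its $\xi$-derivatives, and the D-property polynomials per step, plus bounded overhead from the cycle construction.

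Finally, BKK gives $\deg Z\le n!\,\vol(\Delta+\Delta_{n,\xi})$ by monotonicity of mixed volumes (each constituent Newton polytope lies in $\Delta+\Delta_{n,\xi}$, and $\mathrm{MV}(\Delta',\ldots,\Delta')=\vol(\Delta')$), yielding the claimed bound. The principal obstacle will be showing that $\diam\Delta_{n,\xi}$ is singly-exponential in $n$: naively, repeated application of $\xi$ along the descent can introduce factors that multiply polytope diameters, giving double-exponential growth. The fix, parallel to the additive form of $\tilde a_{n,\delta}$ in Corollary~\ref{cor:mult-estimate}, is to organize the descent so that the Newton-polytope enlargement at each step is an additive Minkowski contribution from a fixed polytope depending only on $n$ and $\xi$, rather than a multiplicative rescaling; this keeps the cumulative polytope a Minkowski sum of at most $\poly(n)$ fixed polytopes whose total diameter remains singly-exponential in $n$.
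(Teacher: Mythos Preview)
Your outline matches the paper's approach at the coarsest level --- build a cycle forest with Newton-polytope bookkeeping and apply Bernstein--Kushnirenko at the end --- and if you simply cite Theorem~\ref{thm:toric-mult-forest} and pass to the single-point bound, you are essentially reproducing Corollary~\ref{cor:toric-mult-estimate} followed by the crude inequality $\qmi_1(\Delta+\Delta_{n,\xi})\le\vol(\Delta+\Delta_{n,\xi})$. That last step is fine. The explicit toric compactification you propose is unnecessary overhead: the paper works entirely in $(\C^*)^n$ via the ``toric class'' formalism, and BKK enters only to evaluate $(L_{\Delta+\Delta_{n,\xi}})^n$.

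Your middle two paragraphs, however, contain two genuine problems. First, you misplace the D-property: it is \emph{not} invoked during the descent to manufacture new defining polynomials, and it contributes nothing to the polytope $\Delta_{n,\xi}$ (which depends only on $n$ and $\xi$, never on $\chi$). The D-property enters only at the leaves of the forest, via Proposition~\ref{prop:cycle-dist-vs-mult}, to convert $\mult_p^{\gamma_p}V$ into $\chi\cdot\mult_p V$; the factor $3+\chi$ then arises from summing $\mult_p$ over the forest levels, not from polytope arithmetic. Second --- and this is the real gap --- your ``fix'' for the singly-exponential diameter is a restatement of the goal, not a method. What actually forces the polytope enlargement to be additive is Lemma~\ref{lem:toric-main} (the toric analog of Lemma~\ref{lem:main}): at each node $V$ one selects a polynomial $\tilde P\in I_V\cap L_\Delta$ of minimal $\pdeg$, and the lemma bounds $\mult_V^\xi\tilde P$ by a universal constant $b_{n,\delta}$ independent of $V$; only then does the first non-vanishing derivative $Q$ satisfy $\Delta(Q)\subset\Delta+b_{n,\delta}\Delta_\xi$. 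Proving that lemma is the technical heart of the paper: it needs the Hilbert-function comparison of Lemma~\ref{lem:toric-hf-poly-finder} to bound the multiplicities $m_i^j$ in an auxiliary chain of cycles, and it bootstraps from the a priori singly-exponential bound of Theorem~\ref{thm:mult-morse} to handle the small-$\pdeg$ regime. Without these ingredients there is no control on the number of $\xi$-derivatives required at each node, and the diameter blows up doubly-exponentially exactly as you feared.
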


We remark that the formulation in $(\C^*)^n$ rather than $\C^n$ is a
matter of elegance and technical convenience. If the polytope $\Delta$
is a convex co-ideal in $\Z^n_{\ge0}$ then by a simple translation
argument our estimate holds for any point $p\in\C^n$, as explained in
Remark~\ref{rem:co-ideals}.

\subsection{Synopsis of this paper}
\label{sec:synopsis}

Our principal contribution is an approach to multiplicity estimates
based on algebraic cycles, their local-analytic structure and
intersection theory. The local nature of our arguments simplifies the
proofs of many of the best known estimates, and extends naturally to
more general ambient spaces (as illustrated by our estimates in terms
of Newton polytopes, related to toric compactifications). This
approach also admits a natural generalization to the study of
multiplity estimates for foliations of dimension greater than one ---
this falls outside the scope of the present paper, but
see~\secref{sec:conclusion} for a brief discussion.

In~\secref{sec:synopsis-pure} we give an outline of our approach in
the pure degree case. In~\secref{sec:synopsis-newton} we discuss the
generalization of these multiplicity estimates to the context of the
theory of Newton polytopes. In~\secref{sec:synopsis-organization} we
describe the organization of the paper.

\subsubsection{The pure degree case}
\label{sec:synopsis-pure}

We begin by describing our approach in the case of pure degrees. A
general paradigm for proving multiplicity estimates, which has been
used by different authors in various ways, is as follows:
\begin{itemize}
\item Associate to each ideal a \emph{multiplicity}, such that the
  multiplicity of the principal ideal generated by $P$ essentially
  agrees with $\mult_p P\rest{\gamma_p}$.
\item Show how to construct from a given ideal $I$ a new ideal $J$, of
  smaller dimension (using the derivative operator $\xi$), such that
  the multiplicity of $I$ is bounded in terms of the multiplicity of
  $J$.
\item Show that one eventually obtains an ideal for which the multiplicity
  is known, for example the whole ring.
\end{itemize}
For instance, in \cite{gabrielov:mult-old,ny:chains} this paradigm was
used, where the multiplicity associated to $I$ is
$\min_{F\in I}[\mult_p F\rest{\gamma_p}]$. In Nesterenko's approach a
more refined notion of multiplicity for unmixed ideals was used
(see~\secref{sec:nest-approach} for details).

Our approach follows the same paradigm, working with algebraic cycles
in place of ideals. We introduce a local analytic notion of the
\emph{multiplicity of an analytic cycle along $\gamma_p$} (see
Definition~\ref{def:cycle-curve-mult}). Namely, for any germ of an
irreducible analytic variety $V\subset M$ at $p$, we define the
multiplicity of $V$ along $\gamma_p$, denoted $\mult_p^{\gamma_p} V$
to be the Samuel multiplicity of the ideal of functions vanishing on
$\gamma_p$, restricted to the local ring $\cO_{V,p}$ of germs of
regular functions on $V$ at the point $p$. We extend this notion, by
linearity, to arbitrary analytic cycles.

Let $V$ be a germ of an analytic set at $p$ and $f$ the germ of an
analytic function with $f\rest V\equiv0$ and $g=\xi f$ with
$g\rest V\not\equiv0$. Let $\Gamma$ denote the intersection cycle
$V\cdot V(g)$. We prove the estimate
$\mult_p^{\gamma_p}V\le\mult_p V+\mult_p^{\gamma_p}\Gamma$ where
$\mult_p V$ denotes the multiplicity of the analytic germ $V$ at $p$
(see Lemma~\ref{lem:rolle-cycle}). This may be viewed as a local form
of Rolle's lemma, relating the number of zeros of a function (or in
the local case, the multiplicity) to the zero locus of its derivative.

We recursively define a forest (union of trees) where each node is an
irreducible variety with an associated multiplicity, as follows:
\begin{enumerate}
\item The roots are given by the components of $V(P)$ with their
  multiplicities.
\item If $n[V]$ is a node and $V$ is a point, or is contained in a
  proper $\xi$-invariant variety, then this node is a leaf.
\item Otherwise, we choose a polynomial $F$ vanishing on $V$ with
  $G=\xi F$ and $G\rest V\not\equiv0$ (as explained below), and let
  the children of $n[V]$ be the components of the intersection cycle
  $n[V]\cdot V(G)$.
\end{enumerate}

Assume now that $n[V]$ is a node of the forest above, which is not
contained in any proper $\xi$-invariant variety. In this case we show
that one can always choose a polynomial $G$ as in item~(3) above, with
$\deg G\le \deg P+\tilde a_{n,\delta}$ where $a_{n,\delta}$ is some universal
constant depending only on $n,\delta$ and growing singly-exponentially
with $n$ (see Lemma~\ref{lem:main} and Theorem~\ref{thm:mult-forest}).

This step is similar to a lemma appearing in the work of Nesterenko
(see~\secref{sec:nest-approach} for details). However, our approach to
the proof is different, relying on local analytic considerations
concerning the multiplicities of analytic germs. This allows us to
give a shorter and more transparent proof, and improve the growth of
the constants from doubly-exponential in Nesterenko's theorem to
single-exponential. Later this also allows us to relatively easily
extend our arguments from the pure degree case to general Newton
polytopes.

We are now ready to complete the argument. Suppose that $\gamma_p$
satisfies the D-property with constant $\chi$. We show that the
multiplicity $\mult_pP\rest{\gamma_p}$ is bounded by the sum of the
(local analytic) multiplicities of all the nodes in the forest above
at the point $p$, where the multiplicities of the leafs are taken with
coefficient $\chi$ (except for isolated points, which may be taken
with the coefficient 1). Our proof proceeds by a simple tree induction
over the forest, using the following properties for the roots, the
recursive step, and the leafs respectively.
\begin{itemize}
\item $\mult_p P\rest{\gamma_p}=\mult_p^{\gamma_p} V(P)$ (see Proposition~\ref{prop:divisor-mult}).
\item By the Rolle-type theorem above, if $\Gamma$ is a node and
  $\Gamma'$ denotes the sum of its children, then
  $\mult_p^{\gamma_p}\Gamma\le\mult_p\Gamma+\mult_p^{\gamma_p}\Gamma'$.
\item If $V$ is a point then $\mult_p^{\gamma_p}V=1$, and if $V$ is
  contained in a proper $\xi$-invariant variety then
  $\mult_p^{\gamma_p}V\le\chi\mult_p V$ (see Proposition~\ref{prop:cycle-dist-vs-mult}).
\end{itemize}

The proof of the multiplicity estimate is concluded in a
straightforward manner by computing an upper bound for the degrees of
all nodes appearing in the forest using the Bezout theorem,
thereby in particular bounding their multiplicities at $p$.

The forest constructed in our proof describes the behavior of the
multiplicity function not only at a single point $p$, but in fact over
any collection of points satisfying the D-property.
In~\secref{sec:recover-nesterenko} we show how to derive the
multiplicity estimates of \cite{nesterenko:modular,dolgalev:mult} for
the case of multiple points from the geometry of this forest.

\subsubsection{The toric case: estimates in terms of Newton polytopes}
\label{sec:synopsis-newton}

For some applications it is not sufficient to give the multiplicity
estimate in terms of a pure degree $d$. For instance, in
Nesterenko's Theorem~\ref{thm:nest-main} and its intended application
it is important to describe the dependence of the multiplicity
function on the $z$-degree $d_z$ and the $x$-degree $d_x$ separately.
Estimates in terms of the pure degree $d$ are natually tied to the
study of varieties in the projective space $\C P^n$, referred to in
the literature as the \emph{absolute} case. Estimates in the mixed
degree case, i.e. for the two separate degrees $d_z,d_x$ are naturally
tied to the study of varieties in the product $\C P^1\times\C P^n$,
referred to in the literature as the \emph{relative} case.

It is natural to expect that similar types of estimates should hold
for different notions of degree. Rather than consider the different
possible compactifications of the affine space corresponding to each
type of degree, we present a uniform expression of a multiplicity
estimate in the framework of Newton polytope theory. Namely, by the
Bernstein-Kusnirenko theorem (see~\secref{sec:bk}) it is natural to
expect that for a polynomial $P$ with Newton polytope $\Delta$, the
multiplicity $\mult_p^{\gamma_p} P$ would be essentially bounded by a
constant times the volume $\vol(\Delta)$. A result of this type would
immediately generalize the pure and mixed degree cases: the former
corresponds to $\Delta=d\Delta_x$ and the latter to
$\Delta=d_z\Delta_z+d_x\Delta_x$, where $\Delta_z,\Delta_x$ denote the
standard polytopes in the $z$ and $x$ variables respectively.

In~\secref{sec:toric-estimates} we repeat our proof in the context of
Newton polytope theory, and obtain an analogous description of the
multiplicity function in terms of a forest of cycles. This leads to
similar multiplicity estimates over one or several points in terms of
the volumes of the Newton polytopes associated to $P$ and the vector
field $\xi$.

\subsubsection{Organization of this paper}
\label{sec:synopsis-organization}
  
In~\secref{sec:cycles} we review the basic notions related to cycles
and their intersections; define the notion of the multiplicity of an
analytic cycle along a smooth analytic curve; and prove the basic
results concerning this notion. In~\secref{sec:pure-degree} we prove
our multiplicity estimates in the most familiar pure degree case. All
the key ideas are already present in this context.
In~\secref{sec:convex} we give some background on the theory of Newton
polytopes including the Bernstein-Kushnirenko theorem, and prove some
elementary results in convex geometry that are needed in the sequel.
In~\secref{sec:toric-estimates} we prove our multiplicity estimates
for general Newton polytopes, and show how these results imply
Theorem~\ref{thm:nest-main} and its various generalizations.
In~\secref{sec:conclusion} we list some general conluding remarks. In
the appendix we give a brief historical review of various multiplicity
estimates and their origins; describe Nesterenko's principal results
on the D-property and multiplicity estimates; and sketch the proof of
Nesterenko's estimates with appropriate references to the present
paper for comparison.

\section{Cycles and multiplicities}
\label{sec:cycles}

For simplicity we present the results of this section in the context
of the ambient space $M=\spec R$, where $R$ is either
$\C[x_1,\ldots,x_n],\C[x_1^\pm,\ldots,x_n^\pm]$, or the ring $\cO_p$
of germs of holomorphic functions in $(\C^n,p)$. However, we note that
many of the results could be extended without change to more general
ambient spaces.

\subsection{Cycles and their intersections}

We give a brief review of the basic notions related to cycles and
their intersection product. For a canonical reference see
\cite{fulton:it}.

Recall that a $k$-cycle is defined to be a formal sum $\sum n_i [V_i]$
where $n_i\in\Z$ and $V_i\subset M$ are irreducible varieties of
dimension $k$. A general \emph{cycle} is a sum of cycles of various
dimensions. In this paper we shall deal only with cycles with positive
coefficients.

Two irreducible varieties $V,W\subset M$ are said to intersect
\emph{properly} at a component $Z\subset V\cap W$ if
$\codim Z=\codim V+\codim W$. In this case we have a well defined
number $i(Z;V\cdot W;M)$, the \emph{multiplicity} of $Z$ in
the intersection of $V$ and $W$. If every component of the
intersection $V\cap W$ is proper then one has a well defined
intersection product
\begin{equation}
  V\cdot W = \sum_{Z\subset V\cap W} i(Z;V\cdot W;M) [Z]
\end{equation}
This can be extended by linearity to the product of arbitrary
cycles, assuming every pair of components in the product intersect
properly. This product is commutative and associative.

To each function $f\in R$ one can associate its divisor $V(f)$,
which is an $n-1$-cycle. In particular, if $\gamma\subset M$ is
a curve passing through $p$ then
\begin{equation}\label{eq:vf-mult}
  i(p; V(f)\cdot\gamma;M) = \mult_p (f\rest\gamma)
\end{equation}
where $\mult$ above denotes the usual multiplicity of a holomorphic
function on a holomorphic curve.

If $V\subset M$ is an irreducible variety, we denote by $\cO_V$ the
ring of regular functions on $V$ (i.e. $\cO_V:=R/I_V$ where $I_V$
denotes the ideal of functions vanishing on $V$). If $p\in M$, we
denote by $\cO_{V,p}$ the corresponding ring of germs of holomorphic
functions on $V$ at the point $p$. Note that we consider holomorphic
localization even if $V$ is defined in the algebraic category.

Recall that the multiplicity of $V$ at a point $p\in M$, denoted
$\mult_p V$, is defined to be $e(\fm_p,\cO_{V,p})$ where $\fm_p$ denotes
the maximal ideal of $\cO_{V,p}$. Geometrically, we
have
\begin{equation} \label{eq:mult-def}
  \mult_p V = i(p;V\cdot L;M)
\end{equation}
where $L$ denotes a generic affine linear plane of codimension $\dim V$
passing through $p$. We extend this definition by linearity to arbitrary
cycles.

We record the following standard fact.

\begin{Prop}\label{prop:mult-nodrop}
  Let $\Gamma$ denote a cycle and $f\in R$, and suppose that $\Gamma$
  intersects $V(f)$ properly. Then for any $p\in V(f)$ we have
  \begin{equation}
    \mult_p \Gamma \le \mult_p \Gamma\cdot V(f)
  \end{equation}
\end{Prop}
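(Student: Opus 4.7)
The plan is to reduce to the case where $\Gamma$ is a germ of an irreducible curve at $p$ by cutting with a generic linear section, and then observe that the curve case is an immediate consequence of parametrization.

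\textbf{Reduction to a $1$-cycle.} By linearity of $\mult_p$ and of the intersection product in $\Gamma$, I may assume $\Gamma$ is of pure dimension $k$. Choose a generic affine linear subspace $L\ni p$ of codimension $k-1$. By standard openness in the Grassmannian of affine subspaces through $p$, the following hold simultaneously for generic $L$: $\Gamma$ meets $L$ properly; $\Gamma\cdot V(f)$ meets $L$ properly near $p$; and the $1$-cycle $C:=\Gamma\cdot L$ meets $V(f)$ properly near $p$. Writing a generic codimension-$k$ affine subspace through $p$ as $L\cap H$ for a generic hyperplane $H$ through $p$, and applying associativity to~\eqref{eq:mult-def}, I obtain
\begin{equation*}
  \mult_p\Gamma = i(p;\Gamma\cdot L\cdot H;M) = i(p;C\cdot H;M) = \mult_p C,
\end{equation*}
and similarly $\mult_p(\Gamma\cdot V(f)) = i(p;C\cdot V(f);M)$.

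\textbf{Reduction to a single irreducible curve.} Decompose $C=\sum n_i[C_i]$ with $C_i$ irreducible curves. Components not through $p$ contribute to neither side, so by additivity it suffices to prove, for each $C_i$ through $p$, the single-curve inequality $\mult_p C_i \le i(p;C_i\cdot V(f);M)$. Note that since $C$ meets $V(f)$ properly, $f$ does not vanish identically on any such $C_i$, so the right-hand side is defined.

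\textbf{The curve case.} By~\eqref{eq:vf-mult}, $i(p;C_i\cdot V(f);M)=\mult_p(f\rest{C_i})$. Take a local parametrization $\gamma\colon(\C,0)\to(C_i,p)$ of the normalized germ and set $m=\mult_p C_i$; each coordinate of $\gamma(t)-p$ then vanishes at $t=0$ to order at least $m$. Hence any polynomial $f$ with $f(p)=0$ satisfies $\ord_0(f\circ\gamma)\ge m$, which gives $\mult_p(f\rest{C_i})\ge m$.

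The only real difficulty is the bookkeeping in the first step: arranging a single choice of $L$ that simultaneously satisfies the three proper-intersection conditions and for which the identities $\mult_p\Gamma=\mult_p C$ and $\mult_p(\Gamma\cdot V(f))=i(p;C\cdot V(f);M)$ hold. Each of these is Zariski-open and nonempty in the relevant Grassmannian, so their common refinement is still generic; once the linear section is chosen, the lemma reduces to the elementary curve observation above.
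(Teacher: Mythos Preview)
Your proof is correct. It matches exactly the alternative route the paper itself points to (``the reader may prove this claim by arguing in the same manner as in the proof of Lemma~\ref{lem:rolle-cycle}''): cut by a generic linear $L^{k-1}$ through $p$ to reduce to a curve $C$, identify both sides as intersection numbers on $C$, and finish via Puiseux parametrization of the branches.

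The paper's primary proof is shorter and more algebraic: after reducing by linearity to an irreducible $V$ with $p\in V$, it observes that $\mult_p V=e(\fm_p,\cO_{V,p})$ while $\mult_p(V\cdot V(f))=e(\langle f,\ell_1,\ldots,\ell_{k-1}\rangle,\cO_{V,p})$ for generic linear forms $\ell_i$, and then invokes monotonicity of the Samuel multiplicity under inclusion of $\fm_p$-primary ideals (since $\langle f,\ell_1,\ldots,\ell_{k-1}\rangle\subset\fm_p$). This avoids any genericity bookkeeping and the branch-by-branch analysis, at the cost of quoting a standard commutative-algebra fact. Your approach, by contrast, is self-contained and sets up the same machinery you would need anyway for Lemma~\ref{lem:rolle-cycle}, so nothing is wasted.

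One small point worth tightening: when you decompose $C=\sum n_i[C_i]$ and then parametrize each $C_i$ by a single $\gamma$, you are implicitly assuming the $C_i$ are \emph{analytically} irreducible at $p$. Since both sides of the inequality are local at $p$, you may as well pass to the analytic branch decomposition from the start; then each branch has a single Puiseux parametrization and your order argument goes through verbatim.
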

\begin{proof}
  By linearity it suffices to prove the inequality for a single
  irreducible variety $V$ of dimension $k$. If $p\notin V$ there
  is nothing to prove. Otherwise, the left hand side is given by
  $e(\fm_p,\cO_{V,p})$ and the right hand side is given by
  $e(\<f,\ell_1,\ldots,\ell_{k-1}\>,\cO_{V,p})$ where
  $\ell_1,\ldots,\ell_{k-1}$ denote $k-1$ generic linear functionals.
  The claim follows since the Samuel multiplicity is monotonic with
  respect to inclusion of ideals.

  Alternatively, the reader may prove this claim by arguing in the
  same manner as in the proof of Lemma~\ref{lem:rolle-cycle}.
\end{proof}

If $\Gamma=\sum n_i [p_i]$ is a zero dimensional cycle, we define its
degree to be $\deg\Gamma:=\sum n_i$. If $M$ is $\C^n$ or $(\C^*)^n$ we
extend this to cycles of arbitrary dimension by defining
$\deg\Gamma:=\deg(\Gamma\cdot L)$ where $L$ is a generic affine linear
space of dimension complementary to $\Gamma$.

\subsection{Multiplicity of a cycle along a curve}

We now define the multiplicity of an irreducible variety through a
smooth analytic curve.

\begin{Def} \label{def:cycle-curve-mult}
  Let $V\subset M$ be an irreducible variety and $\gamma\subset M$ a
  smooth analytic curve passing through a point $p\in V$, and assume
  that $\gamma\not\subset V$. Let $I_\gamma\subset\cO_{V,p}$ denote
  the restriction to $\cO_{V,p}$ of the ideal of functions vanishing
  on $\gamma$. Then $I_\gamma$ is $\fm_p$-primary, and we define the
  \emph{multiplicity of $V$ through $\gamma$ at $p$}, denoted
  $\mult_p^\gamma V$, to be $e(I_\gamma,\cO_{V,p})$. If
  $\gamma\subset V$ we define $\mult^\gamma_p V=\infty$.

  We extend this definition by linearity to arbitrary cycles.
\end{Def}

The preceding definition admits a simple geometric interpretation
similar to~\eqref{eq:mult-def}. Indeed, since $\gamma$ is smooth, we
may choose analytic coordinates under which $\gamma$ is linear. In
this case, we have
\begin{equation}\label{eq:cycle-mult-def}
  \mult_p^\gamma V = i(p;V\cdot L; M)
\end{equation}
where $L$ denotes a generic affine linear plane of codimension
$\dim V$ \emph{containing $\gamma$}.

The following proposition shows that the multiplicity of a function
along a curve $\gamma$ can be interpreted in terms of the
corresponding cycle.

\begin{Prop} \label{prop:divisor-mult}
  Let $f\in R$ and let $\gamma$ be a smooth analytic curve,
  $f\rest\gamma\not\equiv0$. Then
  \begin{equation}
    \mult_p^\gamma V(f) = \mult_p (f\rest\gamma)
  \end{equation}
\end{Prop}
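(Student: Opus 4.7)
The plan is to reduce both sides to the same intersection number, using two interpretations already recorded in the excerpt: equation~\eqref{eq:vf-mult} for the right-hand side and equation~\eqref{eq:cycle-mult-def} for the left-hand side.

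First I would decompose the divisor as $V(f)=\sum_i n_i[V_i]$, where the $V_i$ are the irreducible components of the hypersurface $\{f=0\}$ through $p$ and $n_i$ is the order of vanishing of $f$ along $V_i$. The hypothesis $f\rest\gamma\not\equiv 0$ ensures $\gamma\not\subset V_i$ for every $i$, so each $V_i$ meets $\gamma$ properly and the intersection product $V(f)\cdot\gamma$ is defined at $p$.

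Next, since $\gamma$ is smooth, I would choose analytic coordinates at $p$ in which $\gamma$ becomes a $1$-dimensional linear subspace. For each irreducible hypersurface $V_i$, formula~\eqref{eq:cycle-mult-def} reads $\mult_p^\gamma V_i = i(p;V_i\cdot L;M)$, where $L$ is a generic affine linear plane of codimension $\dim V_i = n-1$ containing $\gamma$. Such an $L$ has dimension $1$ and contains the $1$-dimensional $\gamma$, so in these coordinates $L=\gamma$; hence $\mult_p^\gamma V_i = i(p;V_i\cdot\gamma;M)$. Summing over $i$ with multiplicities and using linearity of both $\mult_p^\gamma$ and the intersection product yields
\[
  \mult_p^\gamma V(f)=\sum_i n_i\, i(p;V_i\cdot\gamma;M)=i(p;V(f)\cdot\gamma;M),
\]
and equation~\eqref{eq:vf-mult} identifies the right-hand side with $\mult_p(f\rest\gamma)$.

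I do not expect a serious obstacle. The only mild subtlety is that in the hypersurface case the ``generic'' plane of~\eqref{eq:cycle-mult-def} is forced to coincide with $\gamma$ itself, so one must check that the formula still applies in this limiting situation. This is routine: after linearization of $\gamma$, the ideal $I_\gamma\subset\cO_{V_i,p}$ is generated by the $n-1$ transverse coordinate functions, which form a system of parameters of the $1$-dimensional local ring $\cO_{V_i,p}$, so the Samuel multiplicity $e(I_\gamma,\cO_{V_i,p})$ coincides with the intersection length $i(p;V_i\cdot\gamma;M)$ by the standard equivalence between these two quantities.
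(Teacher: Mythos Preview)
Your proof is correct and follows essentially the same route as the paper: linearize $\gamma$, observe that since $\dim V(f)=n-1=\codim\gamma$ the generic plane in~\eqref{eq:cycle-mult-def} is forced to be $\gamma$ itself, and then invoke~\eqref{eq:vf-mult}. You are simply more explicit than the paper in decomposing $V(f)$ into components and in justifying why the limiting case $L=\gamma$ is legitimate.
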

\begin{proof}
  Choose analytic coordinates making $\gamma$ linear. Then since
  $\codim\gamma=n-1=\dim V$, we have by~\eqref{eq:cycle-mult-def}
  \begin{equation}
    \mult_p^\gamma V(f) = i(p; V(f)\cdot\gamma;M)
  \end{equation}
  and the proposition follows by~\eqref{eq:vf-mult}.
\end{proof}

\subsection{Multiplicity of a cycle along a vector field}

We now define the multiplicity of a cycle with respect to a vector
field. We say that a vector field $\xi$ is an $R$-vector field
if its coefficients in the standard coordinates are functions
from $R$.

\begin{Def}
  Let $\Gamma\subset M$ be a cycle and $\xi$ be an $R$-vector field.
  Let $p\in M$ be a nonsingular point of $\xi$, and let $\gamma_p$
  denote the trajectory of $\xi$ through $p$. We define the
  \emph{multiplicity of $\Gamma$ through $\xi$ at $p$} to be
  $\mult^\xi_p\Gamma :=\mult^{\gamma_p}_p \Gamma$.
\end{Def}

Note that when a vector field is singular at a point $p$ and admits a
smooth analytic trajectory $\gamma_p$ through $p$, we may still talk
about the multiplicity of a cycle through the curve $\gamma_p$. We
prove a Rolle-type lemma for cycles (cf.
\cite[Lemma~5.1]{nesterenko:modular}). It applies for singular as well
as nonsingular points of a vector field.

\begin{Lem}\label{lem:rolle-cycle}
  Let $V\subset M$ be an irreducible variety of dimension $k$. Let
  $\xi$ denote an $R$-vector field defined (possibly singular) near
  $p$, and let $\gamma\subset M$ be a smooth analytic trajectory of
  $\xi$ passing through $p$.

  Suppose that $f\in I_V$ and $g=\xi f\notin I_V$. Then
  \begin{equation}
    \mult_p^\gamma V \le \mult_p V + \mult_p^\gamma(V\cdot V(g)).
  \end{equation}
  Moreover, if $\xi$ is singular at $p$ we may omit the $\mult_p V$
  term,
  \begin{equation}
    \mult_p^\gamma V \le \mult_p^\gamma(V\cdot V(g)).
  \end{equation}
\end{Lem}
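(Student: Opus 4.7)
The plan is to reduce the cycle-theoretic inequality to a curve-wise Rolle-type estimate via generic linear slicing, and to prove that curve case by a direct order-of-vanishing computation along a parametrization. By linearity I may assume $V$ is irreducible. First I dispose of the case $\gamma\subset V$: parametrizing $\gamma$ by $s$ with $\xi(\gamma(s))=h(s)\gamma'(s)$, the identity $f\circ\gamma\equiv 0$ forces $g\circ\gamma=h\cdot(f\circ\gamma)'\equiv 0$, hence $\gamma\subset V(g)$; then $\gamma$ lies in a component of $V\cdot V(g)$, $\mult_p^\gamma(V\cdot V(g))=\infty$, and the inequality is trivial. I henceforth assume $\gamma\not\subset V$, choose analytic coordinates with $p=0$ and $\gamma=\{x_2=\cdots=x_n=0\}$, and, when $\xi$ is nonsingular at $p$, straighten $\xi=\partial/\partial x_1$ analytically.

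Next I pick $k-1$ generic linear forms $\ell_1,\ldots,\ell_{k-1}$ in $x_2,\ldots,x_n$ and let $L=\bigcap V(\ell_j)$, a codimension-$(k-1)$ linear subspace containing $\gamma$. For generic $L$ the intersection cycle $V\cdot L=\sum_j n_j[C_j]$ is a pure one-dimensional cycle of irreducible curves through $p$, with $\gamma\ne C_j$ for each $j$. Combining associativity of the intersection product with~\eqref{eq:cycle-mult-def}, and the fact that a generic plane containing $\gamma$ is still transverse to the tangent cone of $V$ at $p$, I obtain
\[
  \mult_p^\gamma V=\sum_j n_j\mult_p^\gamma C_j,\quad \mult_p V=\sum_j n_j\mult_p C_j,\quad \mult_p^\gamma(V\cdot V(g))=\sum_j n_j\ord_p(g\rest{C_j}),
\]
which reduces the lemma to the case of a single irreducible curve $C$ through $p$ with $\gamma\ne C$ and $g\rest C\not\equiv 0$.

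For that curve case I parametrize $C$ by $\phi(\tau)=(\phi_1(\tau),\ldots,\phi_n(\tau))$, $\phi(0)=0$, and set $a=\ord_\tau\phi_1$ and $b_i=\ord_\tau\phi_i$ for $i\ge 2$, so $\mult_p C=\min(a,\min_i b_i)$ and $\mult_p^\gamma C=\min_i b_i$. Differentiating $f\circ\phi\equiv 0$ yields $\sum_i(\partial f/\partial x_i)(\phi)\phi_i'\equiv 0$; eliminating $\partial f/\partial x_1$ from $g=\sum_i\xi_i\,\partial f/\partial x_i$ gives
\[
  g\rest C(\tau)=\sum_{i\ge 2}\Bigl[\xi_i(\phi)-\tfrac{\xi_1(\phi)}{\phi_1'}\phi_i'\Bigr]\cdot(\partial f/\partial x_i)(\phi).
\]
In the nonsingular case the bracket reduces to $-\phi_i'/\phi_1'$, so $\ord_\tau(g\rest C)\ge\mult_p^\gamma C-a$; when $a\le\mult_p^\gamma C$ this gives $\mult_p^\gamma C\le a+\ord_p(g\rest C)=\mult_p C+\ord_p(g\rest C)$, and when $a>\mult_p^\gamma C$ the bound is trivial since then $\mult_p C=\mult_p^\gamma C$. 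In the singular case $\xi$-invariance of $\gamma$ forces $\xi_i\rest\gamma\equiv 0$ for $i\ge 2$ and $\xi_1(p)=0$, giving $\ord_\tau\xi_i(\phi)\ge\mult_p^\gamma C$ and $\ord_\tau\xi_1(\phi)\ge\min(a,\mult_p^\gamma C)$; the bracket then has order $\ge\mult_p^\gamma C$ when $a\le\mult_p^\gamma C$, while in the complementary regime $\mult_p C=\mult_p^\gamma C$ and $\ord_p g\ge\ord_p f\ge 1$ (since singular $\xi$ preserves the $\fm_p$-filtration) yield $\ord_\tau(g\rest C)\ge\mult_p C=\mult_p^\gamma C$. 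Summing the curve inequalities weighted by $n_j$ completes the proof.

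The main obstacle I anticipate is verifying that slicing by a codimension-$(k-1)$ plane \emph{constrained to contain $\gamma$} still realizes $\mult_p V$ via the generic linear section formula. A secondary delicate point is extracting the extra order of vanishing in the singular case: it relies on the observation that $\xi$-invariance of $\gamma$ forces $\xi_i\rest\gamma\equiv 0$ for $i\ge 2$, so that $\ord_\tau\xi_i(\phi)\ge\mult_p^\gamma C$ holds even though the parametrized curve $C$ is distinct from $\gamma$.
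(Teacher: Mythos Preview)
Your proof is correct and follows the same overall route as the paper: linearize $\gamma$, slice $V$ by a generic codimension-$(k-1)$ plane through $\gamma$ to reduce all three multiplicities to a one-dimensional cycle, then prove the inequality branch by branch via an order-of-vanishing computation. The reductions you write down for $\mult_p^\gamma V$, $\mult_p V$, and $\mult_p^\gamma(V\cdot V(g))$ match the paper's use of associativity and~\eqref{eq:cycle-mult-def} exactly, and the obstacle you flag (that a plane constrained to contain $\gamma$ still realizes $\mult_p V$) is precisely the point the paper passes over with the remark that ``the intersection of $L^{k-1}$ with a generic hyperplane is a generic plane of codimension $k$.''

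The one genuine difference is in the branch-wise estimate. The paper argues by \emph{transferring to $\gamma$}: from $f|_{C_i}\equiv 0$ and the proximity of $C_i$ to $\gamma$ it deduces $\ord_0 f(t,\bar0)\ge v(\bar\phi^i)$, then uses that along $\gamma$ the field $\xi$ acts as $\xi_1(t,\bar0)\partial_t$ (so differentiation drops the order by at most one, or not at all when $\xi_1(p)=0$), and finally transfers the bound on $g(t,\bar0)$ back to $g|_{C_i}$. You instead work \emph{on the branch}: differentiating $f\circ\phi\equiv 0$ to eliminate $(\partial_1 f)(\phi)$ from $g\circ\phi$. Your approach buys a clean algebraic identity but forces you to straighten $\xi$ in the nonsingular case and to split the singular case into the regimes $a\le b$ and $a>b$; the paper's approach avoids both by exploiting that $\xi$ restricted to $\gamma$ is transparent regardless of whether $\xi$ is straightened globally. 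The two computations are of comparable length and difficulty; the paper's is slightly more uniform across the singular/nonsingular cases.
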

\begin{proof}
  Choose analytic coordinates making $\gamma$ linear. If $L^{k-1}$ is a
  sufficiently generic affine plane of codimension $k-1$ and $L$ is
  a sufficiently generic affine hyperplane, both containing $\gamma$,
  then we have by~\eqref{eq:cycle-mult-def}
  \begin{gather}
    \mult_p^\gamma V = i(p; V\cdot(L^{k-1}\cdot L);M) = i(p;C\cdot L;M) \\
    \mult_p^\gamma (V\cdot V(g)) = i(p;(V\cdot V(g))\cdot L^{k-1};M) = i(p;C\cdot V(g);M)
  \end{gather}
  where $C$ denotes the curve $V\cdot L^{k-1}$. In deriving this we
  have used the associativity and commutativity of the intersection
  product, and the fact that all intersections above are proper for
  sufficiently generic $L^{k-1},L$. Note that since $L^{k-1}$ is
  chosen generically from a linear system with base locus $\gamma$, we
  may in fact assume (by Bertini's theorem) that it is a reduced curve
  without assigned multiplicities (although this does not play a role
  in our arguments).

  The intersection of $L^{k-1}$ with a generic hyperplane is a generic
  plane of codimension $k$. This implies that $\mult_p V = \mult_p C$,
  and we denote this number by $\mu$. Let $\ell$ denote the
  affine-linear function with $L=V(\ell)$. Our statement is thus
  reduced to
  \begin{equation}
    i(p; C\cdot V(\ell);M) \le \mu + i(p;C\cdot V(g); M)
  \end{equation}
  and similarly, without the $\mu$ term, for the case when $\xi$ is
  singular at $p$.

  Making a linear change of coordinates we may assume that the
  coordinates are given by $(x_1,\ldots,x_n)$ where $p$ corresponds to
  the origin, $x_1$ is transversal to $C$ and $\gamma$ at the origin,
  and $\gamma$ is given by the vanishing of $x_2,\ldots,x_n$. Then $C$
  admits $\mu$ real pro-branches
  \begin{equation}
    C_i = \{(t,\vphi^i(t)):=(t,\phi^i_2(t),\ldots,\phi^i_n(t)) : t\in\Q_{\ge0}\} \qquad i=1,\ldots,\mu
  \end{equation}
  where the $\phi^i_j(t)$ admit Puiseux expansions and
  $\ord_0\phi^i_j(t)\ge1$. By~\eqref{eq:vf-mult} and a well-known
  formula for the multiplicity of a function on a curve, we have for
  every analytic function $h$ that
  \begin{equation}
    i(p;C\cdot V(h);M)=\mult_p (h\rest C) = \sum_{i=1}^\mu \ord_0 h(t,\vphi^i(t)).
  \end{equation}
  Thus the claim will be proved once we show that for $i=1,\ldots,\mu$,
  \begin{equation}\label{eq:ord-compare}
    \ord_0 \ell(t,\vphi^i(t)) \le 1+\ord_0 g(t,\vphi^i(t))
  \end{equation}
  and similarly, without the $1$ term, for the case when $\xi$ is
  singular at $p$.

  Let $v(\vphi^i):=\min_{j=2,\ldots,n}\ord_0\phi^i_j(t)$, essentially
  measuring the asymptotic distance between $\vphi^i$ and $\bar0$ in
  powers of $t$. Since $\ell$ is a generic linear combination of the
  $x_2,\ldots,x_n$ coordinates, the left hand side
  of~\eqref{eq:ord-compare} is equal to $v(\vphi^i)$ (one only needs
  to make the choice generic enough to avoid cancellation between the
  leading terms of $\phi^i_j, j=2,\ldots,n$). On the other hand,
  $f(t,\vphi^i(t))\equiv0$ by assumption, and hence
  $\ord_0 f(t,\bar0)\ge v(\vphi^i)$. But the $x_1$ axis is a
  trajectory of $\xi$, and since derivation cannot decrease the order
  of an analytic function by more than $1$ we have
  \begin{equation}
    \ord_0g(t,\bar0) = \ord_0 (\xi f)(t,\bar0) \ge v(\vphi^i)-1.
  \end{equation}
  Finally, translating back to $C_i$, we have
  $\ord_0 g(t,\vphi^i(t))\ge v(\vphi^i)-1$. In the singular case
  derivation by $\xi$ does not decrease the order of $f$, and we
  obtain a similar result without the $1$ term, as claimed.
\end{proof}

We also have the following upper bound.

\begin{Prop} \label{prop:cycle-dist-vs-mult}
  Let $V\subset M$ be an irreducible variety and let $\gamma\subset M$
  be a smooth analytic curve passing through $p$.

  Suppose that $f\in I_V$ and $\mult_p f\rest\gamma=\delta$. Then
  \begin{equation}
    \mult^\gamma_p V \le \delta \cdot \mult_p V
  \end{equation}
\end{Prop}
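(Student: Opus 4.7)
The plan is to mimic the proof of Lemma~\ref{lem:rolle-cycle}: reduce by linearity to an irreducible variety $V$ of dimension $k$, slice $V$ down to a curve $C$ by a generic linear plane containing $\gamma$, and then compare orders along the Puiseux pro-branches of $C$. The condition $f \in I_V$ forces $f$ to vanish identically on each pro-branch, and $\mult_p f\rest\gamma = \delta$ then pins down the orders of the branches of $C$ themselves.

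More precisely, I would first choose analytic coordinates $(x_1,\ldots,x_n)$ at $p$ in which $\gamma$ is the $x_1$-axis. Picking a sufficiently generic affine plane $L^{k-1}$ of codimension $k-1$ containing $\gamma$ and a sufficiently generic linear form $\ell$ vanishing on $\gamma$, the product $L^{k-1}\cdot V(\ell)$ is a generic codimension-$k$ plane containing $\gamma$, so by~\eqref{eq:cycle-mult-def} and the associativity of intersection,
\begin{equation}
  \mult_p^\gamma V = i(p;C\cdot V(\ell);M), \qquad C:=V\cdot L^{k-1},
\end{equation}
and $\mult_p C=\mult_p V=:\mu$. As in Lemma~\ref{lem:rolle-cycle}, writing the pro-branches of $C$ as $C_i=\{(t,\vphi^i(t))\}$ and setting $v(\vphi^i)=\min_{j\ge 2}\ord_0 \phi^i_j(t)$, genericity of $\ell$ gives
\begin{equation}
  \mult_p^\gamma V = \sum_{i=1}^\mu \ord_0\ell(t,\vphi^i(t)) = \sum_{i=1}^\mu v(\vphi^i).
\end{equation}

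The key step is then to bound $v(\vphi^i) \le \delta$ for each $i$. Since $f\in I_V$, $f$ vanishes on each branch: $f(t,\vphi^i(t))\equiv 0$. By Hadamard's lemma one can write $f(t,y)-f(t,0)=\sum_{j\ge 2} y_j h_j(t,y)$ with $h_j$ holomorphic, so substituting $y=\vphi^i(t)$ yields
\begin{equation}
  -f(t,0) = \sum_{j\ge 2}\phi^i_j(t)\,h_j(t,\vphi^i(t)),
\end{equation}
whose right-hand side has order at least $v(\vphi^i)$. On the other hand $\ord_0 f(t,0) = \mult_p f\rest\gamma = \delta$, so $v(\vphi^i)\le\delta$. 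Summing over $i$ gives $\mult_p^\gamma V \le \mu\delta = \delta\cdot\mult_p V$, and the general case follows by linearity in cycles.

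The only real obstacle is handling the genericity of $L^{k-1}$ and $\ell$ carefully so that the intersections above are proper and all equalities of Puiseux orders hold; but this is exactly the setup already established inside the proof of Lemma~\ref{lem:rolle-cycle}, so it can simply be invoked. The heart of the argument — the Hadamard-type comparison yielding $v(\vphi^i)\le\delta$ — is elementary, and in fact this proposition is in spirit a ``zero-derivative'' version of the Rolle-type estimate proved there.
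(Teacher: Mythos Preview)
Your proof is correct and follows essentially the same route as the paper: set up the pro-branch decomposition exactly as in Lemma~\ref{lem:rolle-cycle}, reduce to showing $v(\vphi^i)\le\delta$, and deduce this from $f(t,\vphi^i(t))\equiv0$ together with $\ord_0 f(t,\bar0)=\delta$. The paper phrases this last step as a one-line contradiction (if $v(\vphi^i)>\delta$ then $\ord_0 f(t,\bar0)>\delta$) rather than invoking Hadamard's lemma explicitly, but the content is identical.
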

\begin{proof}
  We keep the notations from the proof of Lemma~\ref{lem:rolle-cycle}
  and argue in a similar manner. By the same arguments, it will
  suffice to prove that $v(\vphi^i)\le\delta$ for $i=1,\ldots,\mu$.
  Assuming the contrary, we see that $f(t,\vphi^i(t))\equiv0$ implies
  $\ord_0f(t,\bar0)>\delta$ contrary to the conditions of the
  proposition.
\end{proof}

\subsection{Multiplicities at generic points}

Let $W\subset M$ be an irreducible variety. We introduce the following
notation to simplify our exposition. Let $\xi$ be an $R$-vector field,
$\Gamma$ a cycle and $f\in R$. For each of the multiplicity functions
$\mult_p\Gamma$, $\mult^\xi_p f$ and $\mult^\xi_p\Gamma$ we define the
multiplicity functions $\mult_W\Gamma$, $\mult^\xi_W f$ and
$\mult^\xi_W\Gamma$ to denote the value of the corresponding
multiplicity function at a generic point $p\in W$.

It is easy to verify that the definition above is well-defined, i.e.
that for $p$ outside a set $\Sigma$ of positive codimension in $W$ the
multiplicity functions above are constant (and take larger values on
$\Sigma$). It is also easy to verify that the two multiplicity
functions involving $\Gamma$ above are linear with respect to
$\Gamma$.

The following is a simple transversality statement.

\begin{Prop}\label{prop:mult-transverse}
  Let $V\subset M$ be an irreducible variety and $\xi$ an $R$-vector field.
  Suppose that $V$ is not invariant under $\xi$. Then $\mult^\xi_V V=1$.
\end{Prop}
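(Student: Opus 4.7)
The plan is to apply the geometric interpretation of multiplicity in \eqref{eq:cycle-mult-def} and reduce the statement to a short transversality check.

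First, by the definition of $\mult^\xi_V V$, it suffices to compute $\mult^\xi_p V=\mult^{\gamma_p}_p V$ at a generic $p\in V$. Since $V$ is irreducible and not $\xi$-invariant, the locus of smooth points of $V$ at which $\xi$ is nonsingular and $\xi(p)\notin T_pV$ is a nonempty Zariski-open subset of $V$; I would take $p$ there. Set $k=\dim V$. Then $\gamma_p$ is a smooth analytic curve with tangent line $\C\xi(p)$ satisfying $\C\xi(p)\cap T_pV=0$.

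Next, I would change to analytic coordinates making $\gamma_p$ linear and apply \eqref{eq:cycle-mult-def}, obtaining
\begin{equation*}
\mult^\xi_p V=i(p;V\cdot L;M)
\end{equation*}
for a generic affine plane $L$ of codimension $k$ containing $\gamma_p$. Since $\dim V+\dim L=n$ and $p$ is a smooth point of both $V$ and $L$, the intersection multiplicity equals $1$ as soon as $V$ and $L$ meet transversally at $p$, i.e. $T_pV\cap T_pL=0$.

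The decisive step is a dimension count in the quotient $\pi\colon T_pM\to T_pM/\C\xi(p)$. The restriction $\pi|_{T_pV}$ is injective (because $\C\xi(p)\cap T_pV=0$), so $\pi(T_pV)$ has dimension $k$, while $\pi(T_pL)$ is a generic $(n-k-1)$-dimensional subspace of the $(n-1)$-dimensional quotient. Because $k+(n-k-1)=n-1$, these two images intersect trivially for generic $L$; combined with $\C\xi(p)\subset T_pL$ and $\C\xi(p)\cap T_pV=0$, this yields $T_pV\cap T_pL=0$, completing the proof. I do not anticipate any significant obstacle — the argument is essentially unwinding the definition of $\mult^\xi_p V$ and performing a standard linear-algebra genericity check.
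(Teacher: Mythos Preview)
Your proof is correct. The paper actually mentions your geometric transversality argument as the intuitive reason the result holds, but then chooses to formalize it differently: since $V$ is not $\xi$-invariant, $I_V$ is not $\xi$-stable, so one picks $P\in I_V$ with $g=\xi P\notin I_V$ and applies Lemma~\ref{lem:rolle-cycle} to obtain $\mult^\gamma_p V\le \mult_p V+\mult^\gamma_p(V\cdot V(g))$; at a generic $p\in V$ the first term is $1$ (smoothness) and the second term vanishes (since $V\cdot V(g)$ has smaller dimension and misses a generic point of $V$). Your route unwinds the definition via~\eqref{eq:cycle-mult-def} and checks transversality of $T_pV$ and $T_pL$ by a direct linear-algebra dimension count in $T_pM/\C\xi(p)$. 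Both arguments are short; the paper's has the advantage of reusing the Rolle-type lemma already in place, while yours is self-contained and makes the geometric picture fully explicit.
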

\begin{proof}
  The claim follows since, at a generic point of $V$, $\xi$ must be
  transversal to $V$. Formally we may argue that by assumption $I_V$
  is not a $\xi$-invariant ideal, and applying
  Lemma~\ref{lem:rolle-cycle} with any polynomial $P\in I_V$ such that
  $\xi P\notin I_V$ proves the claim, since $\mult_V V=1$ (as $V$ is
  smooth at a generic point).
\end{proof}

\section{The multiplicity estimate in the pure degree case}
\label{sec:pure-degree}

We now restrict attention to the case $M=\C^n$. We let $\xi$ denote
the vector field
\begin{equation}
  \xi = \sum_{i=1}^n \xi_i(x) \pd{}{x_i}, \qquad \max_i \deg \xi_i = \delta.
\end{equation}

Let $\cN(n,\delta,d)$ denote the maximal possible (finite) value of
$\mult_p^\xi P$ for any $\xi$ as above, $p\in M$ a nonsingular point of $\xi$,
and $P\in R$ with $\deg P\le d$. In \cite{mult-morse} the following is
proved.

\begin{Thm} \label{thm:mult-morse}
  With the notation above,
  \begin{equation}
    \cN(n,\delta,d) \le 2^{n+1} (d+(n-1)\delta)^n
  \end{equation}
\end{Thm}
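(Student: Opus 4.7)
The plan is to reduce to an iterated application of the Rolle-type Lemma~\ref{lem:rolle-cycle}, combined with Bezout's theorem to control the growth of degrees, following the forest paradigm sketched in~\secref{sec:synopsis-pure}. By Proposition~\ref{prop:divisor-mult} we have $\mult_p^\xi P = \mult_p^{\gamma_p} V(P)$, so it suffices to bound the multiplicity of the cycle $V(P)$ along the trajectory $\gamma_p$. The idea is to construct a sequence of cycles $\Gamma_0, \Gamma_1, \dots, \Gamma_{n-1}$ with $\Gamma_0 = V(P)$ and $\dim \Gamma_k = n - 1 - k$, where $\Gamma_{k+1}$ is produced from $\Gamma_k$ by replacing each positive-dimensional irreducible component $V$ by the intersection product $V\cdot V(\xi F_V)$ for a suitably chosen $F_V \in I_V$ with $\xi F_V \notin I_V$ (zero-dimensional components propagate unchanged). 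Iterating Lemma~\ref{lem:rolle-cycle} yields
\begin{equation}
\mult_p^{\gamma_p} V(P) \;\le\; \sum_{k=0}^{n-2} \mult_p \Gamma_k \;+\; \mult_p \Gamma_{n-1},
\end{equation}
and since $\Gamma_{n-1}$ is zero-dimensional, its multiplicity at $p$ is bounded by its total degree.

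The numerical step is to show by Bezout that each $\Gamma_k$ has degree at most $\prod_{j=0}^{k-1}(d + j(\delta - 1) + \delta)$, which is majorized by $(d + (n-1)\delta)^{k+1}$. Indeed, at step $k$ one picks $F_V$ of degree at most $d + k(\delta-1)$ (using that $V$ arises as a component of an intersection of hypersurfaces whose degrees grow in this controlled way along the forest), and then $\deg(\xi F_V) \le \deg F_V + \delta - 1$. The sum $\sum_{k=0}^{n-1} \deg \Gamma_k$ is then geometric-like and bounded by $2 \cdot (d+(n-1)\delta)^n$ or similar, so combined with Proposition~\ref{prop:mult-nodrop} (multiplicity at a point is at most degree for cycles in $\C^n$) one recovers an estimate of the claimed shape $2^{n+1}(d+(n-1)\delta)^n$ after tracking constants through the iteration.

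The existence of $F_V$ at each step is where the hypothesis that $p$ is nonsingular enters. Assuming without loss of generality that $\gamma_p$ is not contained in any proper algebraic subvariety (otherwise one restricts attention to the smallest such variety and reruns the argument in lower dimension, which can be absorbed into the induction), no component $V$ encountered in the forest can be $\xi$-invariant while containing a neighborhood of $p$ in $\gamma_p$. Consequently $I_V$ is not a $\xi$-stable ideal, and some defining polynomial of $V$ (already appearing in the construction of $\Gamma_k$) has Lie derivative outside $I_V$; this yields an admissible $F_V$ at the required degree without invoking the general D-property.

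The main obstacle is the bookkeeping at point (a): ensuring that at every node $V$ of the forest, among the naturally available polynomials cutting out $V$, at least one has Lie derivative outside $I_V$, and that its degree fits the inductive budget $d + k(\delta-1)$. A clean way is to show that if all available defining equations had $\xi$-image in $I_V$, then $V$ would be $\xi$-invariant, contradicting the assumption on $\gamma_p$; this, together with careful accounting of the initial factor $2^{n+1}$ arising from summing the intermediate multiplicities $\mult_p \Gamma_k$ (and absorbing the cross terms in the Bezout expansion $(d + (n-1)\delta)^n$), should yield the stated bound.
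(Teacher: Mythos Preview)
Theorem~\ref{thm:mult-morse} is not proved in this paper at all; it is quoted from the external reference~\cite{mult-morse} (``In \cite{mult-morse} the following is proved'') and then used as a black-box input to Lemma~\ref{lem:main}. So there is no proof in the paper to compare your proposal against, and your argument must stand on its own.

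It does not. The gap is precisely at the degree-control step for $F_V$. You assert that at level $k$ one can take $F_V\in I_V$ with $\xi F_V\notin I_V$ and $\deg F_V\le d+k(\delta-1)$, with the justification that ``if all available defining equations had $\xi$-image in $I_V$, then $V$ would be $\xi$-invariant.'' That implication fails. The polynomials accumulated along the branch to $V$ cut out $V$ only as a component of their common zero locus; they need not generate $I_V$, and their differentials need not be linearly independent at a generic point of $V$ (exactly when $V$ carries multiplicity $>1$ in the intersection cycle, which the forest certainly allows). In that situation $\xi$ can annihilate every available equation along $V$ without being tangent to $V$, so none of your low-degree candidates works and you have no control on the degree of the first $F_V$ that does. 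Without that control the recursive Bezout bookkeeping unravels: the honest bound on $\deg F_V$ is $d+m(\delta-1)$ with $m$ the order of contact of $P$ with $V$ along $\xi$, which is itself of the size you are trying to estimate.

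This is not a cosmetic issue. Finding such an $F_V$ of controlled degree is exactly the content of Lemma~\ref{lem:main}, and the paper proves that lemma \emph{by invoking Theorem~\ref{thm:mult-morse}}. The Remark immediately following Lemma~\ref{lem:main} says explicitly that if one drops Theorem~\ref{thm:mult-morse} the estimates become ``significantly worse.'' In other words, the forest machinery of this paper alone does not yield the single-exponential bound $2^{n+1}(d+(n-1)\delta)^n$; a genuinely different idea (of the type in~\cite{gabrielov:mult} or~\cite{mult-morse}) is required.
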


We recall the standard notion of Hilbert functions. For our purposes
it is more convenient to work in a given affine chart
$x_1,\ldots,x_n$. We define for an affine variety $V\subset\C^n$,
\begin{equation}
  H(V,t) = \dim_C L(V,t), \qquad L(V,t):=\{P\rest V : P\in R, \deg P\le t\}
\end{equation}

Estimates of the following type were given, in projective form, in
\cite{nesterenko:hf} (see also \cite{bertrand:hf}). The affine version
follows readily from the corresponding projective estimate in
homogeneous variables $x_0,\ldots,x_n$ by restricting to the chart
$(1:x_1,\ldots,x_n)$. The reader may also consult the proof of
Proposition~\ref{prop:toric-hf-bound}.

\begin{Prop}\label{prop:hf-est-pd}
  Let $V\subset\C^n$ be an affine variety of dimension $k$. Then
  \begin{equation}
    \hf(V,t) \le \deg(V)\cdot t^k + k.
  \end{equation}
\end{Prop}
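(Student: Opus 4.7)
The plan is to reduce to a standard bound on the projective Hilbert function via homogenization, and then induct on dimension using generic hyperplane sections. Let $\bar V \subset \P^n$ be the projective closure of $V$, and write $S = \C[x_0,\ldots,x_n]$. The homogenization map $P \mapsto x_0^t P(x_1/x_0,\ldots,x_n/x_0)$ is a linear isomorphism from the space of polynomials of degree $\le t$ in $R = \C[x_1,\ldots,x_n]$ onto $S_t$, and under this identification $P\rest{V} \equiv 0$ if and only if the homogenization lies in $I(\bar V)$. Hence $H(V,t) = \dim(S/I(\bar V))_t$ coincides with the ordinary projective Hilbert function of $\bar V$, and $\bar V$ has the same dimension $k$ and degree $d$ as $V$.

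It therefore suffices to show that for any pure $k$-dimensional projective subscheme $X \subset \P^n$ of (scheme-theoretic) degree $d$, one has $H_X(t) := \dim(S/I(X))_t \le d t^k + k$. I would argue by induction on $k$. For the base case $k = 0$, $X$ is a zero-dimensional subscheme of length $d$; multiplication by a generic linear form not vanishing on the support of $X$ is injective on $S/I(X)$, so the Hilbert function is non-decreasing, starts at $1$, and stabilizes at $d$, giving $H_X(t) \le d$ for all $t$. For the inductive step, choose $\ell \in S_1$ generic enough to be a non-zerodivisor on $S/I(X)$, which is possible since $X$ is pure-dimensional. The short exact sequence
\[ 0 \to (S/I(X))_{t-1} \xrightarrow{\cdot\ell} (S/I(X))_t \to \bigl(S/(I(X)+(\ell))\bigr)_t \to 0 \]
identifies $H_X(t) - H_X(t-1)$ with the Hilbert function of the scheme-theoretic hyperplane section $X \cap V(\ell)$, which is pure of dimension $k-1$ and of degree $d$. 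Applying the inductive hypothesis and telescoping from $s = 1$ to $t$, using $H_X(0) = 1$, yields
\[ H_X(t) \le 1 + \sum_{s=1}^t \bigl[d s^{k-1} + (k-1)\bigr], \]
which an elementary calculation bounds by $d t^k + k$.

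The main point of care is the formulation of the inductive hypothesis: the hyperplane section $X \cap V(\ell)$ need not be reduced even if $X$ is an irreducible variety, so the induction must be carried out for arbitrary pure-dimensional projective subschemes, with degree interpreted scheme-theoretically, rather than only for reduced varieties. With this in place the induction runs smoothly, and the remaining step is the arithmetic inequality $1 + \sum_{s=1}^t[ds^{k-1}+(k-1)] \le dt^k + k$ for $d \ge 1$, $k \ge 1$, which reduces to $d(t^k - \sum_{s=1}^t s^{k-1}) \ge (k-1)(t-1)$; the cases $t \in \{0,1\}$ are direct, and for $t \ge 2$ the left side grows as $dt^k(k-1)/k$, easily dominating the linear right side.
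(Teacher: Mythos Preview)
Your inductive approach has a genuine gap: the strengthened inductive hypothesis you set up---that $H_X(t)\le d\,t^k+k$ holds for \emph{every} pure $k$-dimensional projective subscheme $X$ of degree $d$---is false. Take $X$ to be two skew lines in $\P^3$, say $I(X)=(x_0,x_1)\cap(x_2,x_3)$; then $d=2$, $k=1$, but $H_X(1)=4>2\cdot1+1=3$. (The affine version fails identically: two skew lines in $\C^3$ give $H(V,1)=4$.) The proposition is intended for \emph{irreducible} $V$, as is clear both from its use in Lemma~\ref{lem:hf-poly-finder} and from the proof the paper points to; this hypothesis is essential.

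This is not a cosmetic issue that can be repaired by restricting the hypothesis to irreducible $X$, because your inductive step destroys irreducibility. More concretely, the short exact sequence gives
\[
H_X(t)-H_X(t-1)=\dim\bigl(S/(I(X)+(\ell))\bigr)_t,
\]
which is \emph{not} in general the Hilbert function of the scheme $X\cap V(\ell)$: the latter is computed from the \emph{saturation} of $I(X)+(\ell)$, and the two differ whenever $\depth S/I(X)=1$. In the skew-lines example one checks $\dim\bigl(S/(I(X)+(\ell))\bigr)_1=3$ while $H_{X\cap V(\ell)}(1)=2$, so the inductive bound on the section does not control the cokernel. Carrying the unsaturated ideal through the induction does not help either: after one cut the irrelevant ideal may become associated, and then no linear form is a non-zerodivisor, so the exact sequence fails at the next step.

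The paper's argument (given in the proof of Proposition~\ref{prop:toric-hf-bound}, to which it refers the reader) avoids all of this. One embeds the irreducible $V$ into $\P^{h-1}$, $h=H(V,t)$, via the linear system of degree-$\le t$ polynomials; the image $W$ is irreducible, non-degenerate, of dimension $k$, with $\deg W\le\deg(V)\cdot t^k$ by B\'ezout. The classical inequality $h\le\deg W+\dim W$ for non-degenerate irreducible projective varieties then gives the bound in one stroke. Irreducibility is used exactly once, to invoke this minimal-degree bound, and no saturation or regular-sequence issues arise.
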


The following lemma, a simple corollary of
Proposition~\ref{prop:hf-est-pd}, shows that the ideal of a variety of
sufficiently small degree must contain polynomials of bounded degree.

\begin{Lem} \label{lem:hf-poly-finder}
  Let $d>2n$ and let $V$ be an irreducible variety of dimension $k$ with
  \begin{equation}
    \deg V \le A_n^{-1} d^{n-k} \qquad A_n= 2 n!
  \end{equation}
  Then $I_V$ contains a non-zero polynomial $P$ with $\deg P<d$.
\end{Lem}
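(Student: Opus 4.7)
The plan is a standard dimension count: a nonzero polynomial of degree $<d$ lies in $I_V$ precisely when the restriction map
\[
\rho\:L(d-1)\longrightarrow L(V,d-1),\qquad P\mapsto P\rest V,
\]
fails to be injective. Since $L(V,d-1)$ has dimension $H(V,d-1)$ by definition, it suffices to exhibit a strict inequality
\[
\dim L(d-1) \;>\; H(V,d-1).
\]

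First I would lower-bound the left-hand side by a clean power of $d$. The dimension of $L(d-1)$ equals the number of monomials in $n$ variables of total degree $\le d-1$, namely $\binom{n+d-1}{n}$, and writing this product out factor by factor gives the lower bound
\[
\dim L(d-1) \;=\; \frac{(d-1+n)(d-2+n)\cdots d}{n!}\;\ge\;\frac{d^n}{n!}.
\]
Next, I would upper-bound the right-hand side using Proposition~\ref{prop:hf-est-pd}, which gives
\[
H(V,d-1)\;\le\;\deg(V)\cdot(d-1)^k + k\;\le\;\deg(V)\cdot d^k + k.
\]

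Plugging in the hypothesis $\deg V\le A_n^{-1}d^{n-k}=\tfrac{d^{n-k}}{2n!}$ bounds the first term by $\tfrac{d^n}{2n!}$, so it suffices to verify
\[
\frac{d^n}{n!}\;>\;\frac{d^n}{2n!}+k,
\]
i.e.\ $d^n>2n!\cdot k$. Since $k\le n$, the assumption $d>2n$ yields $d^n>(2n)^n=2^n n^n \ge 2n\cdot n!\ge 2n!\cdot k$, completing the count.

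The argument has no real obstacle — the only care needed is to keep track of the $+k$ error term in Proposition~\ref{prop:hf-est-pd} and to check that the leading binomial coefficient genuinely dominates it, which is exactly where the mild hypothesis $d>2n$ is used. The constant $A_n=2n!$ is tuned precisely so that the hypothesis consumes at most half of the available dimension $d^n/n!$, leaving the other half to absorb the additive slack $k\le n$.
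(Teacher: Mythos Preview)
Your proof is correct and follows essentially the same approach as the paper's: both compare the lower bound $\dim L(d-1)=\binom{d+n-1}{n}\ge d^n/n!$ against the upper bound for $H(V,d-1)$ coming from Proposition~\ref{prop:hf-est-pd}, with the hypothesis $d>2n$ used to absorb the additive error term $k$. The only cosmetic difference is that the paper folds $k$ into the estimate $H(V,d-1)\le 2A_n^{-1}d^n$ before comparing, whereas you keep it separate; the arithmetic is the same.
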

\begin{proof}
  By Proposition~\ref{prop:hf-est-pd},
  \begin{equation}
    \hf(V,d-1) \le \deg(V) (d-1)^k  + k \le A_n^{-1} d^n + k \le 2A_n^{-1} d^n
  \end{equation}
  On the other hand,
  \begin{equation}
    \hf(\C^n,d-1) = \binom{d+n-1}{n} > d^n / n!
  \end{equation}
  It follows that some non-zero polynomial of degree bounded by
  $d-1$ vanishes on $V$, as claimed.
\end{proof}

The following lemma plays a key role in our arguments (cf.
\cite[Lemma~5.4]{nesterenko:modular}, see~\secref{sec:nest-approach} for
discussion).

\begin{Lem} \label{lem:main} Let $V\subset M$ be an irreducible
  variety of dimension $k$ and suppose that $V$ is not contained in a
  (non-trivial) $\xi$-invariant variety. Let $P$ be a non-zero
  polynomial of minimal degree in $I_V$. Then
  \begin{equation}
    \mult^\xi_V P\le a_{n,\delta}, \qquad a_{n,\delta}=\cN(n,\delta,A_n2^nn\delta)
  \end{equation}
\end{Lem}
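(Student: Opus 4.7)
The plan reduces to two cases governed by the threshold $D := A_n 2^n n\delta$. If $d := \deg P \le D$, then because $\Sing(\xi)$ is a proper $\xi$-invariant variety, the hypothesis on $V$ forces $V\not\subset\Sing(\xi)$; hence a generic $p\in V$ is nonsingular for $\xi$, and Proposition~\ref{prop:divisor-mult} gives $\mult^\xi_V P = \mult_p P\rest{\gamma_p} \le \cN(n,\delta,d) \le \cN(n,\delta,D) = a_{n,\delta}$ directly.

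For the remaining case $d > D$, my plan is to argue by contradiction with the minimality of $\deg P$ in $I_V$: I will show that if $\mu := \mult^\xi_V P > a_{n,\delta}$, one can produce a nonzero polynomial in $I_V$ of degree strictly less than $d$. The initial observation is that, since $\gamma_p$ is a trajectory of $\xi$, the $i$-th Taylor coefficient of $P\rest{\gamma_p}$ at $p$ is proportional to $(\xi^i P)(p)$. Thus $\mult_p P\rest{\gamma_p} \ge \mu$ at generic $p\in V$ forces $(\xi^i P)(p)=0$ for $0\le i<\mu$, and by irreducibility $\xi^i P \in I_V$ for all such $i$. The assumption that $V$ lies in no proper $\xi$-invariant variety further guarantees $\xi^\mu P \notin I_V$: otherwise the $\xi$-invariant ideal generated by $P$ would be contained in $I_V$, exhibiting $V$ inside a proper invariant variety.

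With the family $\{\xi^i P\}_{i<\mu}\subset I_V$ in hand, the next step is to cut $V$ out as a component of a complete intersection of small total degree. Concretely, I would build a descending chain $V = V^{(n-k)}\subsetneq V^{(n-k-1)}\subsetneq\cdots\subsetneq V^{(0)}=\C^n$ of irreducible varieties, $\codim V^{(j)}=j$, where $V^{(j+1)}$ is chosen as an irreducible component of $V^{(j)}\cap V(\xi^{i_j}P)$ containing $V$, and the index $i_j$ is taken minimally subject to the intersection being proper along $V$. Bezout then yields $\deg V \le \prod_{j=0}^{n-k-1}\deg(\xi^{i_j}P)$. If this product can be made $\le A_n^{-1}D^{n-k}$, Lemma~\ref{lem:hf-poly-finder} supplies the desired polynomial $Q\in I_V$ of degree $<D<d$, contradicting minimality.

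The main obstacle is that, taken naively, $\deg\xi^{i_j}P$ inherits the (large) parameter $d$, so the Bezout bound is too weak. To overcome this one must avoid using $\xi^{i_j}P$ as-is; rather, I expect to replace it at each stage by a smaller-degree representative of $\xi^{i_j}P$ modulo $I_{V^{(j)}}$ that still fails to vanish on $V^{(j)}$. This replacement is precisely where the assumption that $V$ is in no proper invariant variety does its real work: it rules out long stalled runs in which many consecutive $\xi^iP$ all lie in $I_{V^{(j)}}$, for such a run would sandwich a proper invariant subvariety between $V$ and $V^{(j)}$. Iterating Theorem~\ref{thm:mult-morse} together with a pigeonhole on the finite-dimensional space of degree-$D$ polynomials vanishing on $V^{(j)}$ should bound each replacement by a constant depending only on $n,\delta$, with the factor $2^n n$ in $D$ arising as the accumulated constant across the $n-k$ codimension steps.
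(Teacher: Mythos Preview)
Your reduction for $d\le D$ is correct and matches the paper. The difficulty is entirely in the case $d>D$, and there your plan has a genuine gap.

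The mechanism you propose---replacing $\xi^{i_j}P$ by a ``smaller-degree representative modulo $I_{V^{(j)}}$''---is self-defeating. Since $V\subset V^{(j)}$ we have $I_{V^{(j)}}\subset I_V$, so any such representative $Q_j\equiv\xi^{i_j}P\pmod{I_{V^{(j)}}}$ automatically lies in $I_V$. If $\deg Q_j<d$, minimality forces $Q_j=0$, whence $\xi^{i_j}P\in I_{V^{(j)}}$, contradicting your choice of $i_j$. For the same reason the ``finite-dimensional space of degree-$D$ polynomials vanishing on $V^{(j)}$'' on which you want to pigeonhole is just $\{0\}$. Your argument about ``long stalled runs'' producing an invariant variety sandwiched between $V$ and $V^{(j)}$ is also wrong: only if \emph{every} $\xi^iP$ lies in $I_{V^{(j)}}$ do you get $V^{(j)}$ inside an invariant variety; a long finite run gives nothing. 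In short, you cannot bound $\deg V$ from above---minimality of $d$ forces every $W\supset V$ (in particular $V$ itself) to have $\deg W>A_n^{-1}d^{n-\dim W}$ via Lemma~\ref{lem:hf-poly-finder}, which is precisely the opposite inequality.

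The paper exploits this last observation rather than fighting it. It does not argue by contradiction and does not try to bound $\deg V$. Instead it builds a descending sequence of \emph{cycles} $\Gamma^1=V(P),\Gamma^2,\ldots,\Gamma^{n-k}$ (not single components), each $\Gamma^{j+1}$ obtained from $\Gamma^j$ by intersecting each component $W^j_i$ with $V(Q^j_i)$ for a suitable derivative $Q^j_i$ of $P$, and keeping only the components that still contain $V$. Two quantities are tracked: the total degree $\deg\Gamma^j\le(2d)^j$, and the running inequality $\mult^\xi_W P\le(j-1)\mult_W\Gamma^j+\mult^\xi_W\Gamma^j$ for every $W$ with $V\subset W\subset\supp\Gamma^j$, propagated using Lemma~\ref{lem:rolle-cycle}. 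Since each component $W^j_i$ contains $V$, the lower bound $\deg W^j_i>A_n^{-1}d^j$ forces the \emph{multiplicity} $m^j_i$ of $W^j_i$ in $\Gamma^j$ to satisfy $m^j_i<A_n2^j$. Combined with Proposition~\ref{prop:mult-transverse} this gives $\mult^\xi_{W^j_i}P<A_n2^jj$, which both bounds the number of derivatives needed to form $Q^j_i$ (hence $\deg Q^j_i\le 2d$) and, at the final step $j=n-k$ where $\Gamma^{n-k}$ is supported on $V$ itself, yields the desired bound on $\mult^\xi_V P$. The minimality of $d$ is thus used to bound \emph{coefficients in a cycle}, not degrees of cutting equations.
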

\begin{proof}
  We first note that $\mult^\xi_V P$ is finite. Indeed, otherwise $V$
  would be contained in the invariant variety defined by $\<P,\xi P,\ldots\>$
  contrary to the conditions of the lemma.

  Let $d:=\deg P$. Henceforth we assume that $d>A_n 2^n n \delta$.
  Otherwise, the statement of the lemma follows from
  Theorem~\ref{thm:mult-morse}.

  \begin{Claim}
    Let $W\subset M$ be an irreducible variety of dimension $l\ge k$
    with $V\subset W$. Then $\deg W > A_n^{-1} d^{n-l}$.
  \end{Claim}
  \begin{proof}
    Indeed, otherwise by Lemma~\ref{lem:hf-poly-finder} we have a polynomial
    of degree smaller than $d$ in $I_W$, and since $I_W\subset I_V$ this contradicts
    the minimality of $d=\deg P$.
  \end{proof}
  
  We now proceed with the proof of the lemma. We will construct a
  sequence of cycles $V(P)=\Gamma^1\supset\cdots\supset\Gamma^{n-k}$ with
  the following properties:
  \begin{enumerate}
  \item $\codim \Gamma^j = j$.
  \item Every component of $\Gamma^j$ contains $V$.
  \item For every irreducible variety $W$ with
    $V\subset W\subset\supp\Gamma^j$ we have
    \begin{equation}
      \mult^\xi_WP \le (j-1)\mult_W\Gamma^j+\mult^\xi_W\Gamma^j
    \end{equation}
  \item The degree of $\Gamma^j$ is bounded, $\deg\Gamma^j \le (2d)^j$.
  \end{enumerate}
  The first cycle $\Gamma^1:=V(P)$ clearly satisfies the conditions
  (condition~(3) follows from Proposition~\ref{prop:divisor-mult} for
  any point $p$, and certainly for generic points in $W$).

  We proceed with the construction by induction. Suppose that
  $\Gamma^j$ has been constructed, $\Gamma^j=\sum m^j_i [W^j_i]$. By
  assumption we have $V\subset W^j_i$, and by the preceding claim
  $\deg W^j_i> A_n^{-1} d^j$. Combinig this with condition~(4) we
  see that $m^j_i< A_n 2^j$. By condition~(3) and
  Proposition~\ref{prop:mult-transverse} we have
  \begin{equation} \label{eq:p-wji-mult}
    \begin{aligned}
      \mult^\xi_{W^j_i} P &\le (j-1)\mult_{W^j_i} \Gamma^j + \mult^\xi_{W^j_i}\Gamma^j \\
       &= (j-1)m^j_i + m^j_i < A_n 2^j j
    \end{aligned}
  \end{equation}
  If $\mult^\xi_{W^j_i}P=\mult^\xi_VP$ then~\eqref{eq:p-wji-mult}
  proves the claim of the lemma. Otherwise, let $Q^j_i$ denote the
  first $\xi$-derivative of $P$ which does not vanish on $W^j_i$ (but
  still vanishes on $V$). By~\eqref{eq:p-wji-mult} we have
  \begin{equation}\label{eq:qij-deg}
    \deg Q^j_i\le d+(A_n 2^j j)\delta \le 2 d
  \end{equation}

  Let $\tilde\Gamma^{j+1}_i=[W^j_i]\cdot V(Q^j_i)$ and
  $\Gamma^{j+1}_i$ consist of the components of $\tilde\Gamma^{j+1}_i$
  which contain $V$. Finally, define
  $\Gamma^{j+1}=\sum m^j_i \Gamma^{j+1}_i$. Conditions~(1) and~(2)
  hold by definition, and condition~(4) follows inductively by
  linearity and the Bezout theorem.

  We move now to the proof of condition~(3). We have for any
  $V\subset W\subset \supp \Gamma^{j+1}_i$ the inequality
  \begin{equation}\label{eq:wji-mult}
    \mult_W W^j_i \le \mult_W \tilde\Gamma^{j+1}_i = \mult_W \Gamma^{j+1}_i
  \end{equation}
  where we used Proposition~\ref{prop:mult-nodrop} for generic points
  in $W$, and the fact that the components of
  $\tilde\Gamma^{j+1}_i-\Gamma^{j+1}_i$ do not contain $V$ (and
  certainly do not meet generic points of $W$). Similarly, using
  Lemma~\ref{lem:rolle-cycle} for generic points in $W$ we have
  \begin{equation}\label{eq:wji-xi-mult}
    \begin{aligned}
      \mult^\xi_W W^j_i &\le \mult_W W^j_i + \mult^\xi_W \tilde\Gamma^{j+1}_i \\
      &\le \mult_W \Gamma^{j+1}_i + \mult^\xi_W \Gamma^{j+1}_i
    \end{aligned}
  \end{equation}
  Condition~(3) now follows inductively from~\eqref{eq:wji-mult}
  and~\eqref{eq:wji-xi-mult} and linearity of the multiplicity
  function. This concludes our construction.

  Conditions~(1) and~(2) guarantee that $\Gamma^{n-k}$ is supported on
  $V$, and in the notation above $W^{n-k}_1=V$.
  Thus~\eqref{eq:p-wji-mult} implies the claim of the lemma.
\end{proof}

\begin{Rem}
  We have not attempted to optimize the dependence on all parameters
  in the proof above, which could clearly be improved at a number of
  points in the argument. Our main goal (as far as explicit estimates
  are concerned) was to provide a bound admitting single-exponential
  growth with $n$. Note that one can repeat the proof above without
  relying on Theorem~\ref{thm:mult-morse}, but the estimates in this
  case become significantly worse.
\end{Rem}

\subsection{The multiplicity forest}

We start with a definition.
\begin{Def}
  A \emph{cycle forest} $T$ is a directed forest (union of directed
  trees) whose nodes are irreducible varieties with assigned
  multiplicities, such that:
  \begin{enumerate}
  \item All nodes of level $k$ have codimension $k$.
  \item All children of a node $n[V]$ are subvarieties of $V$.
  \end{enumerate}
  We will denote by $T^k$ the cycle formed by the sum of all nodes
  of level $k$ (with their assigned multiplicities), and by $\cL(T)$
  the set of leafs of $T$.
\end{Def}

Our principal result it the following theorem.
\begin{Thm}\label{thm:mult-forest}
  Let $P\in R$ with $\deg P=d$. There exists a cycle forest $T_P$ with
  the following properties.
  \begin{enumerate}
  \item The roots are given by the components of $V(P)$ (with their
    assigned multiplicities).
  \item Every leaf of $T_P$ is either an isolated point, or a
    variety contained in some $\xi$-invariant variety.
  \item The degree of $T_P^k$ is bounded,
    \begin{equation}
      \deg T_P^k \le (d+\tilde a_{n,\delta})^k \qquad \tilde a_{n,\delta}=\delta a_{n,\delta}
    \end{equation}
  \item Denote by $\cL^+(T)$ (resp. $\cL^0(T)$) the set of leafs of
    $T$ that have positive dimension (resp. dimension zero). If
    $\gamma_p$ is a smooth analytic trajectory of $\xi$ through $p$
    which satisfies the D-property at $p$ with constant $\chi$, then
    \begin{equation}
      \mult_p^{\gamma_p} P \le \sum_{\Gamma\in T\setminus\cL^+(T_P)} \mult_p\Gamma
      + \chi \sum_{\Gamma\in\cL^+(T_P)}\mult_p\Gamma
    \end{equation}
    If $\xi$ is singular at $p$ this bound may be tightened to
    \begin{equation}
      \mult_p^{\gamma_p} P \le \sum_{\Gamma\in \cL^0(T_P)} \mult_p\Gamma
      + \chi \sum_{\Gamma\in\cL^+(T_P)}\mult_p\Gamma
    \end{equation}
  \end{enumerate}
\end{Thm}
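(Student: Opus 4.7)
The plan is to build the forest $T_P$ inductively, level by level, by globalizing the cycle construction from the proof of Lemma~\ref{lem:main}. At level~$1$ I take the roots to be the irreducible components of $V(P)$, each weighted by its multiplicity in the divisor. Given a node $n[W]$ at level~$k$, if $W$ is a point or is contained in a proper $\xi$-invariant subvariety then I declare $n[W]$ a leaf; otherwise I will choose a polynomial $G_W$ of degree at most $d+\tilde a_{n,\delta}$ of the form $G_W = \xi F_W$ with $F_W \in I_W$ and $G_W \notin I_W$, and declare the children of $n[W]$ to be the components of the intersection cycle $[W] \cdot V(G_W)$, each weighted by its intersection multiplicity times the multiplicity of the parent. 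Since the codimension strictly increases at each step, the recursion terminates after at most $n$ levels, and properties~(1) and~(2) hold by construction.

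The essential step is producing $G_W$ with a degree bound independent of the depth of $W$ in the tree. Since every non-leaf node $n[W]$ descends from a root component of $V(P)$, the polynomial $P$ itself lies in $I_W$, and therefore the minimal-degree polynomial $P_W \in I_W$ satisfies $\deg P_W \le d$. As $W$ is not contained in any proper $\xi$-invariant variety, Lemma~\ref{lem:main} yields $\mult^\xi_W P_W \le a_{n,\delta}$, so there is a minimal integer $s \le a_{n,\delta}$ with $\xi^s P_W \notin I_W$. Taking $F_W = \xi^{s-1} P_W$ and $G_W = \xi F_W$ produces the desired polynomial, with $\deg G_W \le d + a_{n,\delta}\delta = d + \tilde a_{n,\delta}$. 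For property~(3), the Bezout theorem combined with this uniform bound gives $\deg T_P^{k+1} \le (d+\tilde a_{n,\delta}) \deg T_P^k$, whence $\deg T_P^k \le d(d+\tilde a_{n,\delta})^{k-1} \le (d+\tilde a_{n,\delta})^k$ by induction on $k$.

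For property~(4) I will proceed by a bottom-up induction on the forest. Proposition~\ref{prop:divisor-mult} supplies the base identity $\mult_p^{\gamma_p} P = \mult_p^{\gamma_p} V(P)$, and at each internal node $n[W]$ Lemma~\ref{lem:rolle-cycle} gives
\begin{equation*}
\mult_p^{\gamma_p} W \le \mult_p W + \mult_p^{\gamma_p} \bigl([W] \cdot V(G_W)\bigr),
\end{equation*}
with the $\mult_p W$ term dropping out when $\xi$ is singular at $p$. The second term on the right equals $\mult_p^{\gamma_p}$ of the sum of the children of $n[W]$ with their forest multiplicities, so weighting by the parent multiplicity and iterating level by level expresses $\mult_p^{\gamma_p} P$ as the sum of $\mult_p$ over all internal nodes (absent in the singular case) plus $\mult_p^{\gamma_p}$ over all leaves. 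At a zero-dimensional leaf the two multiplicities coincide, while for a positive-dimensional leaf $W \subset V^\ast$ with $V^\ast$ a proper $\xi$-invariant variety, the D-property produces a polynomial $F$ vanishing on $V^\ast$ with $\mult_p F\rest{\gamma_p} \le \chi$, and Proposition~\ref{prop:cycle-dist-vs-mult} applied to $F \in I_W$ yields $\mult_p^{\gamma_p} W \le \chi \mult_p W$. Collecting contributions gives exactly the two claimed inequalities.

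The main technical hurdle is the uniform bound $\deg G_W \le d + \tilde a_{n,\delta}$ across all nodes of the forest, independent of depth. It rests entirely on the elementary observation that $P \in I_W$ at every non-leaf node, which keeps the minimal-degree polynomial in $I_W$ at degree $\le d$ and thereby allows Lemma~\ref{lem:main} to be invoked with a uniformly bounded output; once this uniform bound is in hand, both the Bezout induction for~(3) and the tree induction for~(4) are essentially routine.
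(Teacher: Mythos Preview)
Your proposal is correct and follows essentially the same approach as the paper: the recursive construction of the forest, the use of $P\in I_W$ to bound the minimal-degree generator by $d$, the appeal to Lemma~\ref{lem:main} to bound the number of $\xi$-derivatives needed, and the verification of (3) via Bezout and (4) via Proposition~\ref{prop:divisor-mult}, Lemma~\ref{lem:rolle-cycle}, and Proposition~\ref{prop:cycle-dist-vs-mult} all match the paper's argument. Your write-up is in fact somewhat more explicit than the paper's at several points (e.g.\ spelling out the Bezout induction and the leaf analysis), but there is no substantive difference in strategy.
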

\begin{proof}
  We construct $T$ recursively, starting with the roots specified in
  condition~(1). Suppose $n[V]$ is a node. If $V$ is a point, or is
  contained in an invariant variety, then this node is a leaf.
  Otherwise, let $\tilde P$ be a polynomial of minimal degree in
  $I_V$. Since $V\subset V(P)$ by definition of a cycle forest, we
  have $P\in I_V$, and hence $\deg \tilde P\le d$. By
  Lemma~\ref{lem:main}, $\mult^\xi_V\tilde P\le a_{n,\delta}$. Let $Q$
  denote the first derivative of $\tilde P$ which does not vanish
  identically on $V$. Then $\deg Q\le d+\tilde a_{n,\delta}$. We
  define the children of $n[V]$ to be the components of
  $n[V]\cdot V(Q)$ (with their assigned multiplicities).

  Conditions~(1) and~(2) hold by definition. Condition~(3) follows
  inductively by application of the Bezout theorem. Finally,
  condition~(4) follows by a straightforward recursive argument, using
  Proposition~\ref{prop:divisor-mult} at the root,
  Lemma~\ref{lem:rolle-cycle} at the recursive step, and
  Proposition~\ref{prop:cycle-dist-vs-mult} at the leafs. It is
  necessary only to note that for a zero-dimensional leaf $[p]$ we
  have $\mult_p^{\gamma_p}[p]=1$ by definition.
\end{proof}

\begin{proof}[Proof of Corollary~\ref{cor:mult-estimate}]
  The estimate follows in a straightforward manner from
  Theorem~\ref{thm:mult-forest}, by noting that the multiplicity
  of a cycle at a point is bounded by the cycle's degree.
\end{proof}

\section{Preliminaries on $(\C^*)^n$ and convex geometry}
\label{sec:convex}

In this section we consider the case $M=(\C^*)^n$. Given a Laurent
polynomial $P\in R$, we define its support $\supp P\subset\Z^n$ to be
the set of exponents appearing with non-zero coefficients in $P$. For
any set $A\subset\R^n$ we denote by $\conv(A)$ the convex hull of $A$.
The convex hull of a finite subset $A\subset\Z^n$ is called an
integral polytope. Finally, define the Newton polytope of $P$ to be
$\Delta(P):=\conv(\supp P)$.

For each set $A\subset\R^n$ we denote by $L_A$ the linear space of
polynomials whose support is contained in $A$.

\subsection{Toric classes}

\begin{Def}
  A \emph{toric $k$-class} $T$ is a symmetric map assigning a
  non-negative number $T(L_{A_1},\ldots,L_{A_k})\in\Z_{\ge0}$ for each
  collection of finite sets $A_1,\ldots,A_k\subset\Z^n$. We identify
  the toric $0$-classes with $\Z_{\ge0}$.

  If $\Gamma$ is a $k$-cycle, we define the corresponding toric class
  $\tc(\Gamma)$ by associating to each tuple $L_{A_1},\ldots,L_{A_k}$
  the number
  \begin{equation}
    \tc(\Gamma)(L_{A_1},\ldots,L_{A_k}) := \deg (\Gamma\cdot V(P_1)\cdots V(P_k))
  \end{equation}
  where $P_i$ is a generic element of $L_{A_i}$. This definition (for
  fixed $\Gamma$) was used in \cite{kk:intersection-theory} to develop
  a type of birationally equivalent intersection theory. In particular
  it is shown in \cite{kk:intersection-theory} that the number above
  is well defined.
\end{Def}

It is easy to see that the map $\Gamma\to\tc(\Gamma)$ is linear. We
define the (partial) order relation $\le$ on the space of toric
$k$-classes to be the pointwise ordering. If $A\subset\Z^n$ is finite
set and $T$ is a toric $k$-class, we define the product $T\cdot L_A$
to be the toric $k-1$-class defined by
\begin{equation}
  (T\cdot L_A)(L_{A_1},\ldots,L_{A_{k-1}}) := T(L_A,L_{A_1},\ldots,L_{A_{k-1}})
\end{equation}
This product is linear and respects the order relation.

\begin{Rem}\label{rem:L_A-identification}
  We sometimes identify $L_A$ with the toric $(n-1)$-class
  $[M]\cdot L_A$.
\end{Rem}

By definition, if $\Gamma$ is a cycle and $f\in L_A$
is such that $V(f)$ meets $\Gamma$ properly then
\begin{equation}
  \tc(\Gamma\cdot V(f)) \le \tc(\Gamma)\cdot L_A
\end{equation}

If $V\subset M$ is an irreducible variety, we define its toric Hilbert
function to be $\hf(V,A):=\dim (L_A)\rest V$ for any finite
$A\subset\Z^n$. In other words, $\hf(V,A)$ denotes the dimension of
the linear space spanned by the restrictions functions from $L_A$ to
$V$. The following proposition gives an upper bound for the toric
Hilbert function. The proof is analogous to that of \cite{bertrand:hf}
(suggested by Kollar). See also \cite{khovanskii:hf} for a different
approach.

\begin{Prop}\label{prop:toric-hf-bound}
  Let $\Delta_x\cap\Z\subset A$. If $V\subset M$ is an irreducible
  variety of dimension $k$ then
  \begin{equation}
    \hf(V,A) \le \tc(V) (L_A)^k + k
  \end{equation}
\end{Prop}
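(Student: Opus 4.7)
The plan is to adapt the projective Hilbert-function bound of~\cite{bertrand:hf} to the toric setting via the monomial embedding $\phi_A\colon (\C^*)^n \to \P^{|A|-1}$, which is defined because the assumption $\Delta_x\cap\Z\subset A$ forces $1,x_1,\ldots,x_n\in L_A$. Let $Y\subset\P^{|A|-1}$ denote the closure of $\phi_A(V)$; then $L_A\rest V$ is canonically identified with the restriction of linear forms on $\P^{|A|-1}$ to $Y$, and the toric intersection number $\tc(V)(L_A)^k$ coincides with the projective degree of $Y$. I proceed by induction on $k=\dim V$. The base case $k=0$ is immediate since $1\in L_A$ gives $\hf(V,A)=1=\tc(V)$. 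For the inductive step I pick a generic $f\in L_A$ so that $W:=V\cap V(f)$ is a proper reduced intersection of pure dimension $k-1$, and further irreducible when $k\ge 2$ by Bertini irreducibility on $Y$ (the restricted linear system is base-point-free since $1\in L_A$).

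The core of the proof is the key inequality $\hf(V,A)\le\hf(W,A)+1$. Consider the restriction $\rho\colon L_A\rest V\to L_A\rest W$; its image has dimension at most $\hf(W,A)$, so it suffices to show $\dim\ker\rho\le 1$. If $g\in L_A$ and $g\rest V$ vanishes on $W$, then viewing $g\rest Y$ and $f\rest Y$ as sections of $\cO_Y(1)$, the zero divisor of $f\rest Y$ is contained in that of $g\rest Y$, so $g/f$ extends to a global regular function on $Y$. Since $V$ is irreducible, $Y$ is connected, and this global function must be a constant. Hence $g\rest V\in\C\cdot f\rest V$, and $\ker\rho=\C\cdot f\rest V$ is at most one-dimensional, yielding the key inequality.

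To close the induction, for $k\ge 2$ the inductive hypothesis applied to the irreducible $W$ of dimension $k-1$ gives $\hf(W,A)\le\tc(W)(L_A)^{k-1}+(k-1)$; combined with the monotonicity of toric classes under intersection recorded just before Remark~\ref{rem:L_A-identification}, namely $\tc(W)\le\tc(V\cdot V(f))\le\tc(V)\cdot L_A$, this yields $\hf(V,A)\le\tc(V)(L_A)^k+k$. For $k=1$, $W$ is a finite reduced set with $|W|\le\deg(V\cdot V(f))=\tc(V)\cdot L_A$, so $\hf(W,A)\le|W|$ gives the same conclusion directly. The main obstacle is the key inequality together with the Bertini irreducibility step; both reduce cleanly to their classical projective counterparts once one passes through the monomial embedding $\phi_A$ and its projective closure $Y$, making $\Delta_x\cap\Z\subset A$ precisely the hypothesis needed to carry the projective proof over verbatim.
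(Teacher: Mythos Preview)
Your proof is correct and follows essentially the same approach as the paper (the Koll\'ar argument cited from~\cite{bertrand:hf}): both embed $V$ into projective space via the linear system $L_A$ and reduce to the classical inequality $N+1\le\deg W+\dim W$ for a nondegenerate irreducible variety $W\subset\P^N$. The only difference is organizational: you embed into $\P^{|A|-1}$ via all monomials of $A$ and run the hyperplane-section induction directly on $V$, whereas the paper embeds into $\P^{\hf(V,A)-1}$ via a basis of $L_A\rest V$ (so the image is automatically nondegenerate) and then invokes the classical fact with a brief sketch --- your key inequality $\hf(V,A)\le\hf(W,A)+1$ is exactly the step the paper abbreviates as ``both sides are decreased by~$1$''.
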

\begin{proof}
  Denote $\hf:=\hf(V,A)$. Consider the map $\phi:V\to\C P^{\hf-1}$
  whose projective coordinates are given by $\hf$ linearly independent
  functions from $(L_A)\rest V$. By assumption, $\phi$ is injective
  (since $A$ contains the constant $1$ as well as the coordinate
  functions). Denote by $W$ the Zariski closure of $\phi(V)$. Then $W$
  is irreducible and $\dim W=\dim V=k$.

  A generic projective space $L$ of codimension $k$ in $\C P^{\hf-1}$
  meets $W$ at points of $\phi(V)$. Since the pullbacks of the linear
  forms on $\C P^{\hf-1}$ to $V$ correspond bijectively to elements of $L_A$,
  we have $\deg W = \tc(V)(L_A)^k$.

  By definition, $W$ is not contained in any proper projective
  subspace of $\C P^{\hf-1}$. The claim now follows from the following
  classical fact: for any irreducible variety $W\subset \C P^{\hf-1}$
  which is not contained in a proper projective subspace,
  $\hf\le\deg W+\dim W$.

  The fact can be proved as follows. First, if $\dim W>1$ then passing
  to a generic projective hyperplane section does not affect the
  inequality (both sides are decreased by $1$) and preserves the
  irreducibility of $W$. Thus it suffices to prove the claim when
  $\dim W=1$. In this case, one can certainly choose a projective
  hyperplane meeting (any) $\hf-1$ points in $W$. Since this
  hyperplane does not contain $W$ by assumption, it follows that
  indeed $\deg W\ge\hf-\dim W$.
\end{proof}

\subsection{Mixed volume and the Bernstein-Kushnirenko theorem}
\label{sec:bk}

Recall that for $n$ convex bodies $\Delta_1,\ldots,\Delta_n$ in
$\R^n$, their \emph{mixed volume} is defined to be
\begin{equation}
  V(\Delta_1,\ldots,\Delta_n) = \pd{^n}{\lambda_1\cdots\partial \lambda_n} \vol(\lambda_1\Delta_1+\cdots+\lambda_n\Delta_n)
  \rest{\lambda_1=\cdots=\lambda_n=0}.
\end{equation}
The mixed volume is symmetric and multilinear, and generates
the volume function in the sense that $V(\Delta,\ldots,\Delta) = \vol(\Delta)$.
In fact, these properties completely determine the mixed volume
function.

The following result, known as the Bernstein-Kushnirenko theorem,
related the number of solutions of a generic system of polynomial
equations with prescribed supports to the mixed volume of their
Newton polytopes.

\begin{Thm}[\protect{\cite{kushnirenko:bk,bernstein:bk}}]
  Let $A_1,\ldots,A_n\subset\Z^n$ be finite sets. Then for generic
  $P_i\in L_{A_i}$, the system of equations $P_1=\cdots=P_n=0$ admits
  exactly $\mu$ solutions in $(\C^*)^n$, where
  \begin{equation} \label{eq:bk}
    \mu = n! V(\conv(A_1),\ldots,\conv(A_n)).
  \end{equation}
  In other words,
  \begin{equation}
    L_{A_1}\cdots L_{A_n} = n! V(\conv(A_1),\ldots,\conv(A_n)).
  \end{equation}
\end{Thm}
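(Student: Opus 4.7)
The plan is to reduce both statements of the theorem to a single claim: the function $\mu(A_1,\ldots,A_n) := L_{A_1}\cdots L_{A_n}$, interpreted via toric classes or equivalently as the generic number of common zeros of $(P_1,\ldots,P_n) \in L_{A_1}\times\cdots\times L_{A_n}$ in $(\C^*)^n$, depends only on $\Delta_i := \conv(A_i)$, is symmetric and multilinear in the $\Delta_i$ under Minkowski sum, and satisfies $\mu(A,\ldots,A) = n!\,\vol(\conv A)$ on the diagonal. Since the mixed volume is uniquely characterized by these three properties, this forces $\mu = n!\,V(\Delta_1,\ldots,\Delta_n)$ and yields both displayed equalities.

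Well-definedness of $\mu$ as a generic count is automatic on the torus: monomials do not vanish on $(\C^*)^n$, so the family $L_{A_1}\times\cdots\times L_{A_n}$ is base-point free on the open torus, a generic tuple cuts out a transverse zero-dimensional intersection, and the cardinality is constant on a Zariski-open subset. Symmetry is tautological. For multilinearity, i.e.\ $\mu(B+B', A_2, \ldots) = \mu(B, A_2, \ldots) + \mu(B', A_2, \ldots)$, I would take $P_1 = QQ'$ with $Q \in L_B$ and $Q' \in L_{B'}$ generic: since $\supp(QQ') \subset B+B'$ and $V(P_1) = V(Q) \cup V(Q')$, intersecting with generic $V(P_2),\ldots,V(P_n)$ produces a disjoint union whose cardinality equals $\mu(B, A_2, \ldots) + \mu(B', A_2, \ldots)$. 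A standard deformation argument then shows that a generic element of $L_{B+B'}$ produces the same count, provided one knows that $\mu$ depends only on $\conv(A_i)$ --- itself a (nontrivial) degeneration/properness statement ensuring that no solutions escape from $(\C^*)^n$ to the toric boundary when the Newton polytope is preserved.

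The main obstacle is the diagonal identity $\mu(A,\ldots,A) = n!\,\vol(\conv A)$, Kushnirenko's theorem proper. The cleanest route sits inside the projective toric variety $X_\Delta$ attached to $\Delta := \conv A$: identify $L_A$ with the global sections of the ample line bundle $\cL_\Delta$, verify by transversality that all solutions of a generic system lie in the dense open torus $(\C^*)^n \subset X_\Delta$ (so that $\mu$ equals the top self-intersection of $c_1(\cL_\Delta)$ on $X_\Delta$), and invoke the classical formula $c_1(\cL_\Delta)^n[X_\Delta] = n!\,\vol(\Delta)$ expressing the degree of a projectively normal toric embedding combinatorially in terms of $\Delta$, e.g.\ via a unimodular lattice triangulation and Bezout on each simplex. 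The two concrete technical points here --- establishing generic transversality in $X_\Delta$ so that no roots are lost to the boundary strata, and executing the polytope-volume computation --- are classical but constitute the substantive work.
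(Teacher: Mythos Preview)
The paper does not prove this theorem: it is quoted as a classical result with citations to \cite{kushnirenko:bk,bernstein:bk} and no argument is supplied. There is therefore nothing in the paper to compare your proposal against.

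For what it is worth, your outline is the standard reduction of Bernstein--Kushnirenko to the characterizing properties of mixed volume, and you have correctly flagged the two genuinely nontrivial ingredients: (i) that the generic root count depends only on the convex hulls $\conv(A_i)$ (Bernstein's properness/non-degeneracy argument controlling roots escaping to the toric boundary), and (ii) Kushnirenko's diagonal identity $c_1(\cL_\Delta)^n[X_\Delta]=n!\,\vol(\Delta)$. Since the paper simply imports the theorem, a full write-up of either of these would go well beyond what is required here; a citation in the style of the paper is the appropriate response.
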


\begin{Rem}\label{rem:co-ideals}
  If the integral polytopes $\Delta_1,\ldots,\Delta_n\subset\Z_{\ge0}^n$ are co-ideals,
  then the Bernstein-Kushnirenko in fact gives an estimate for
  the number of solutions $\mu$ of the corresponding generic
  system of equation \emph{in $\C^n$}. Indeed, the corresponing
  spaces $L_{\Delta_i}$ are closed under the translation $\vec x\to \vec x+\vec a$, and one
  can therefore assume that the solutions of the generic system of
  equations fall outside of the $x_i$-axes, i.e. the number of
  solutions in $(\C^*)^n$ is the same as in $\C^n$.

  All of our results concerning the torus $(\C^*)^n$ could therefore
  be extended to the case $\C^n$ under the further assumption that the
  integral polytopes under consideration are co-ideals.
\end{Rem}

Let $\Delta_x$ denote the standard simplex in the $x$-variables in
$\R^n$. For any convex body $\Delta$ and $j=0,\ldots,n$ we define the
$j$-th (simplicial) \emph{quermassintegral} as
\begin{equation}
  \qmi_j(\Delta) = V(\underbrace{\Delta,\ldots,\Delta}_{n-j\text{ times}},
    \underbrace{\Delta_x,\ldots,\Delta_x}_{j\text{ times}}).
\end{equation}
We note that it is customary to use the Euclidean ball in place of the
standard simplex $\Delta_x$, but for our purposes the simplicial
normalization is more convenient.

\subsection{Two elementary lemmas on volumes and integral volumes}

Let $\Pi_n:=[-1,1]^n$ denote the unit cube in $\R^n$. In general we
say that $\Pi$ is an integral box if it is of the form
$[a_1,b_1]\times\cdots\times[a_n,b_n]$ where $a_i,b_i\in\Z$.

For any body $\Delta\subset\R^n$ Denote by $\vol(\Delta)$ the volume
of $\Delta$ and by $\ivol(\Delta)$ the number of integral points in
$\Delta$.

\begin{Lem}\label{lem:vol-v-ivol-helper}
  For any convex body $\Delta$, we have
  $\ivol(\Delta+\Pi_n)\ge\vol(\Delta)$. Moreover, if $\Pi$ is any
  integral box then
  $\ivol(\Pi\cap(\Delta+\Pi_n))\ge\vol(\Pi\cap\Delta)$.
\end{Lem}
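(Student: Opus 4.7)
The plan is to prove both inequalities by the standard device of covering $\Delta$ by unit cubes centered at integer points. For each $x\in\R^n$ let $k(x)\in\Z^n$ denote a nearest integer point of $x$ in the $\ell^\infty$ norm, chosen by any fixed tie-breaking rule. Then $x\in k(x)+[-\tfrac12,\tfrac12]^n$, and the half-open translates of $[-\tfrac12,\tfrac12]^n$ by $\Z^n$ tile $\R^n$ (up to a measure-zero set).

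For the first inequality, I would observe that if $x\in\Delta$ then $k(x)=x-y$ with $y\in[-\tfrac12,\tfrac12]^n\subset\Pi_n$. Since $\Pi_n$ is symmetric about the origin, this gives $k(x)\in\Delta+\Pi_n$. Hence
\begin{equation*}
\Delta\subset\bigcup_{k\in\Z^n\cap(\Delta+\Pi_n)}\bigl(k+[-\tfrac12,\tfrac12]^n\bigr),
\end{equation*}
and taking volumes yields $\vol(\Delta)\le\ivol(\Delta+\Pi_n)$, since each cube on the right has unit volume.

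For the second inequality, the only extra point to verify is that if $x\in\Pi=[a_1,b_1]\times\cdots\times[a_n,b_n]$ with $a_i,b_i\in\Z$, then $k(x)\in\Pi$ as well. Indeed, each coordinate satisfies $a_i-\tfrac12\le k(x)_i\le b_i+\tfrac12$; since $a_i,b_i,k(x)_i\in\Z$, this forces $a_i\le k(x)_i\le b_i$. Hence the same covering argument restricted to $x\in\Pi\cap\Delta$ gives
\begin{equation*}
\Pi\cap\Delta\subset\bigcup_{k\in\Z^n\cap\Pi\cap(\Delta+\Pi_n)}\bigl(k+[-\tfrac12,\tfrac12]^n\bigr),
\end{equation*}
and the desired inequality $\vol(\Pi\cap\Delta)\le\ivol(\Pi\cap(\Delta+\Pi_n))$ follows by taking volumes.

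There is no serious obstacle here; the only subtlety is making sure the integrality of the endpoints of $\Pi$ is used in the right place, namely to conclude that the nearest integer to a point of $\Pi$ still lies in $\Pi$. The choice $\Pi_n=[-1,1]^n$ (rather than $[-\tfrac12,\tfrac12]^n$) gives some slack in the argument, which will presumably be convenient in subsequent applications.
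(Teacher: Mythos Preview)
Your proof is correct and follows essentially the same covering-by-unit-cubes idea as the paper: the paper takes $A$ to be the union of all corner-anchored cubes $z+[0,1]^n$ ($z\in\Z^n$) meeting $\Delta$ and checks $\Pi\cap\Delta\subset\Pi\cap A\subset\Pi\cap(\Delta+\Pi_n)$ together with $\ivol(\Pi\cap A)\ge\vol(\Pi\cap A)$, whereas you use centered cubes $k+[-\tfrac12,\tfrac12]^n$ via the nearest-integer map. The two are minor variants of the same argument, and your use of centered cubes makes the verification that $k(x)\in\Pi$ for $x\in\Pi$ particularly transparent.
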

\begin{proof}
  Let $A$ denote the union of all cubes of the form $z+[0,1]^n, z\in\Z^n$
  that meet $\Delta$. Then one easily checks that
  \begin{enumerate}
  \item $\vol(\Pi\cap A)\ge\vol(\Pi\cap\Delta)$.
  \item $\ivol(\Pi\cap A)\ge\vol(\Pi\cap A)$.
  \item $\Pi\cap A\subset \Pi\cap(\Delta+\Pi_n)$.
  \end{enumerate}
  The claim follows immediately.
\end{proof}

\begin{Lem}\label{lem:vol-v-ivol}
  Let $\Delta\subset\R^n$ denote a convex polytope such that
  $n\Pi_n\subset\Delta$. Then
  $\ivol(\Delta)\ge \frac{1}{4} \vol(\Delta)$. Moreover, if $\Pi$ is
  any integral box then
  $\ivol(\Pi\cap\Delta)\ge \frac{1}{4} \vol(\Pi\cap\Delta)$.
\end{Lem}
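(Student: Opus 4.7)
The plan is to exhibit a subset $\Delta' \subseteq \Delta$ with $\Delta' + \Pi_n \subseteq \Delta$ and then invoke Lemma~\ref{lem:vol-v-ivol-helper} to convert a volume bound on $\Delta'$ into a lattice-point bound on $\Delta$.

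For the first assertion, I take $\Delta' := \tfrac{n-1}{n}\Delta$. Given $x \in \Delta$ and $z \in \Pi_n$, the identity $\tfrac{n-1}{n}x + z = \tfrac{n-1}{n}\cdot x + \tfrac{1}{n}\cdot(nz)$ exhibits this point as a convex combination of $x\in\Delta$ and $nz\in n\Pi_n\subseteq\Delta$, so it lies in $\Delta$. Hence $\Delta'+\Pi_n\subseteq\Delta$, and Lemma~\ref{lem:vol-v-ivol-helper} gives
\[
\ivol(\Delta) \ge \ivol(\Delta'+\Pi_n) \ge \vol(\Delta') = (1-1/n)^n\vol(\Delta).
\]
The elementary estimate $(1-1/n)^n\ge 1/4$ holds for all $n\ge 2$ (the sequence is monotone from $1/4$ at $n=2$ toward $1/e$). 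The case $n=1$ is handled directly: since $\Delta\supseteq[-1,1]$ one has $\ivol(\Delta)\ge \max(3,\vol(\Delta)-1)$, which already dominates $\vol(\Delta)/4$.

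For the second assertion, the same $\Delta'$ still satisfies $\Delta'+\Pi_n\subseteq\Delta$, so $\Pi\cap(\Delta'+\Pi_n)\subseteq\Pi\cap\Delta$. Applying the second part of Lemma~\ref{lem:vol-v-ivol-helper} yields $\ivol(\Pi\cap\Delta)\ge\vol(\Pi\cap\Delta')$, and the proof reduces to showing $\vol(\Pi\cap\Delta')\ge\tfrac{1}{4}\vol(\Pi\cap\Delta)$.

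The main obstacle is this final volume comparison, since shrinking $\Delta$ toward the origin can remove all of its intersection with a box $\Pi$ located far from the origin. To remedy this I would enlarge the admissible family to the translates $\Delta'_v:=\tfrac{n-1}{n}\Delta+v$: the same convex-combination argument shows $\Delta'_v+\Pi_n\subseteq\Delta$ whenever $nv+n\Pi_n\subseteq\Delta$, so $v$ can be chosen to recenter the scaling on a point near $\Pi\cap\Delta$. Any remaining degenerate case, in which no admissible shift brings $\Delta'_v$ into $\Pi$, forces $\Pi\cap\Delta$ to be thin in the directions transverse to $\Delta\ominus n\Pi_n$, and should be handled by a slicing argument exploiting the many lattice points that accumulate along the long axis of $\Pi\cap\Delta$.
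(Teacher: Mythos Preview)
Your treatment of the first assertion is correct and coincides with the paper's: take $\Delta'=(1-1/n)\Delta$, verify $\Delta'+\Pi_n\subset\Delta$ via the convex-combination identity, apply Lemma~\ref{lem:vol-v-ivol-helper}, and use $(1-1/n)^n\ge 1/4$ for $n\ge 2$ (with $n=1$ handled directly).

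For the ``moreover'' part you have correctly put your finger on a genuine difficulty --- and in fact one that the paper's own proof does not resolve. The paper simply asserts
\[
\vol(\Pi\cap\Delta')\ \ge\ \vol\bigl((1-\tfrac1n)(\Pi\cap\Delta)\bigr),
\]
which amounts to the containment $(1-\tfrac1n)(\Pi\cap\Delta)\subset\Pi\cap\Delta'$; the inclusion into $\Delta'$ is automatic, but the inclusion into $\Pi$ needs $(1-\tfrac1n)\Pi\subset\Pi$, and that holds precisely when $0\in\Pi$. This covers the only case the paper actually uses (namely $\Pi=d\Pi_n$), but not the ``any integral box'' of the stated lemma. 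Your objection about boxes far from the origin is a real counterexample to the displayed inequality: for $n=2$, $\Delta=[-10,10]^2$, $\Pi=[5,10]^2$ one gets $\vol(\Pi\cap\Delta')=0$ while $\vol\bigl(\tfrac12(\Pi\cap\Delta)\bigr)=6.25$. That said, your own remedy for general $\Pi$ is not a complete proof either: the translated family $\Delta'_v$ with $nv+n\Pi_n\subset\Delta$ is a sound idea and your verification that $\Delta'_v+\Pi_n\subset\Delta$ is correct, but the final paragraph (``should be handled by a slicing argument'') is a placeholder rather than an argument. If you only need the lemma for the applications in this paper, the clean fix is to add the harmless hypothesis $0\in\Pi$, under which both your proof and the paper's go through verbatim; the fully general statement still requires work.
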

\begin{proof}
  Let $\Delta':=(1-1/n)\Delta$. Then $\Delta'+\Pi_n\subset\Delta$ and
  \begin{equation}
    \vol(\Pi\cap \Delta')\ge \vol((1-\tfrac{1}{n})(\Pi\cap\Delta))\ge \frac{1}{4} \vol(\Pi\cap\Delta)
  \end{equation}
  The statement now follows from Lemma~\ref{lem:vol-v-ivol-helper}
  applied to $\Delta'$.
\end{proof}

\section{Multiplicity estimates in the toric case}
\label{sec:toric-estimates}

In this section we consider the ambient space $M=(\C^*)^n$. We let
$\xi$ denote the vector field
\begin{equation}
  \xi = \sum_{i=1}^n \xi_i(x) \pd{}{x_i}, \qquad \xi_i\in R
\end{equation}
We define new Newton polytope of $\xi$, denoted $\Delta_\xi$, in the
same way we do for polynomials, where to each term
$x^\alpha\pd{}{x_i}$ we associate the same point in $\Z^n$ as we do
for $x^\alpha/x^i$. We suppose for simplicity that $\Delta_\xi$
contains the origin (there is no loss of generality, since all
problems considered in this section are invariant under multiplication
of $\xi$ by a monomial).

For any polytope $\Delta\subset\R^n$ we define $\deg_\Pi \Delta$ to be
the minimal $d\in\N$ such that $\Delta\subset d\Pi_n$. We let
$\deg_\Pi P:=\deg_\Pi\Delta(P)$.

\subsection{The key lemma in the toric case}

Let $\Delta$ be an integral polytope with $n\Pi_n\subset\Delta$. We
denote $\Delta_d:=\Delta\cap d\Pi_n$.

We begin with a simple lemma analogous to~\ref{lem:hf-poly-finder}.

\begin{Lem}\label{lem:toric-hf-poly-finder}
  Let $d>2n$ and let $V$ be an irreducible variety of dimension $k$ with
  \begin{equation}
    \tc(V)(L_{\Delta_d})^k \le B^{-1}_n (L_{\Delta_d})^n\qquad B_n= 2^4n!
  \end{equation}
  Then $I_V$ contains a polynomial $P\in L_\Delta$ with $\pdeg P<d$.
\end{Lem}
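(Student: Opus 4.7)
The plan is to mirror the proof of Lemma~\ref{lem:hf-poly-finder} in the toric setting: exhibit the strict inequality $\dim L_{\Delta_{d-1}}>\hf(V,\Delta_{d-1})$, so that $I_V\cap L_{\Delta_{d-1}}$ contains a nonzero polynomial $P$; such a $P$ lies in $L_\Delta$ with $\supp P\subset(d-1)\Pi_n$, hence $\pdeg P<d$.

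For the upper bound on the Hilbert function, I would apply Proposition~\ref{prop:toric-hf-bound} --- its hypothesis $\Delta_x\cap\Z^n\subset\Delta_{d-1}$ holds because $n\Pi_n\subset\Delta$ and $d-1\ge 1$ --- to get $\hf(V,\Delta_{d-1})\le\tc(V)(L_{\Delta_{d-1}})^k+k$. I would then use monotonicity of the toric product in its inner factor to replace $L_{\Delta_{d-1}}$ by $L_{\Delta_d}$, invoke the hypothesis of the lemma, and finally evaluate $(L_{\Delta_d})^n=n!\vol(\Delta_d)$ by Bernstein--Kushnirenko. This chains to
\[
\hf(V,\Delta_{d-1})\le\tfrac{1}{16}\vol(\Delta_d)+k.
\]

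For the lower bound on $\dim L_{\Delta_{d-1}}=\ivol(\Delta_{d-1})$, I would note that $n\Pi_n\subset\Delta_{d-1}$ (guaranteed by $d>2n$) and apply Lemma~\ref{lem:vol-v-ivol} to get $\ivol(\Delta_{d-1})\ge\tfrac{1}{4}\vol(\Delta_{d-1})$. To transfer this to a bound in terms of $\vol(\Delta_d)$, I would exploit that $0\in\Delta$: by convexity $\tfrac{d-1}{d}\Delta_d\subset\Delta$, and trivially $\tfrac{d-1}{d}\Delta_d\subset(d-1)\Pi_n$, so $\tfrac{d-1}{d}\Delta_d\subset\Delta_{d-1}$. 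Bernoulli's inequality together with $d>2n$ then yields $\vol(\Delta_{d-1})\ge(1-1/d)^n\vol(\Delta_d)\ge\tfrac{1}{2}\vol(\Delta_d)$, so $\dim L_{\Delta_{d-1}}\ge\tfrac{1}{8}\vol(\Delta_d)$.

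Combining, $\dim L_{\Delta_{d-1}}-\hf(V,\Delta_{d-1})\ge\tfrac{1}{16}\vol(\Delta_d)-k$, which is strictly positive since $\vol(\Delta_d)\ge\vol(n\Pi_n)=(2n)^n$ dominates $16k\le 16n$. The main point I would want to justify carefully is the monotonicity step $\tc(V)(L_{\Delta_{d-1}})^k\le\tc(V)(L_{\Delta_d})^k$: although the paper sets up a pointwise order on toric classes and asserts that the product respects it, monotonicity in the inner $L_A$ argument ultimately rests on the BKK-type interpretation of $\tc(V)(L_A)^k$ as a mixed volume, combined with monotonicity of mixed volumes for origin-containing bodies --- I would want to make this explicit rather than let it pass as routine.
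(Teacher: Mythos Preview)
Your proposal is correct and follows essentially the same route as the paper: both upper-bound $\hf(V,\Delta_{d-1})$ via Proposition~\ref{prop:toric-hf-bound} and the hypothesis, lower-bound $\ivol(\Delta_{d-1})$ via Lemma~\ref{lem:vol-v-ivol} together with the inclusion $(1-\tfrac{1}{d})\Delta_d\subset\Delta_{d-1}$, and compare. The only cosmetic differences are that the paper absorbs the $+k$ term into the volume upper bound rather than handling it at the end, and that the monotonicity step $\tc(V)(L_{\Delta_{d-1}})^k\le\tc(V)(L_{\Delta_d})^k$ you flag is used identically (and without comment) in the paper's own proof.
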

\begin{proof}
  By Proposition~\ref{prop:toric-hf-bound},
  \begin{multline}
    \hf(V,\Delta_{d-1}) \le \tc(V)(L_{\Delta_{d-1}})^k + k 
    \le B_n^{-1} (L_{\Delta_d})^n + k =\\
    B_n^{-1} n! \vol(\Delta_d) + k \le (B_n^{-1} n! + k (2n)^{-n}) \vol(\Delta_d) \le 2^{-3} \vol(\Delta_d)
  \end{multline}
  and by Lemma~\ref{lem:vol-v-ivol},
  \begin{equation}
    \begin{aligned}
      \hf(M,\Delta_{d-1}) &= \ivol(\Delta_{d-1}) \ge \ivol((1-\tfrac{1}{d})\Delta_d)
      > \tfrac{1}{4}\vol((1-\tfrac{1}{d})\Delta_d) \\
      &\ge 2^{-3} \vol(\Delta_d).
    \end{aligned}
  \end{equation}
  It follows that some non-trivial element of $L_{\Delta_{d-1}}$ vanishes
  on $V$, as claimed.
\end{proof}

We are now ready to state the toric analog of the key
Lemma~\ref{lem:main}.

\begin{Lem} \label{lem:toric-main} Let $V\subset M$ be an irreducible
  variety of dimension $k$ and suppose that $V$ is not contained in a
  (non-trivial) $\xi$-invariant variety. Suppose that
  \begin{equation}
    B_n2^n n \Delta_\xi \subset \Delta
  \end{equation}
  and that $I_V$ intersects $L_\Delta$ nontrivially, and let $P$ be a
  non-zero polynomial with minimal $d:=\pdeg P$ in the intersection.
  Then
  \begin{equation}
    \mult^\xi_V P\le b_{n,\delta}, \qquad b_{n,\delta}=\cN(n,\delta,(2n)B_n2^nn\delta)
  \end{equation}
\end{Lem}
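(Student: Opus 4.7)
The plan is to adapt the proof of Lemma~\ref{lem:main}, substituting toric classes for degrees, Lemma~\ref{lem:toric-hf-poly-finder} for Lemma~\ref{lem:hf-poly-finder}, and Bernstein-Kushnirenko for B\'ezout. Finiteness of $\mult^\xi_V P$ is obtained exactly as before: otherwise $V$ would lie in the proper $\xi$-invariant variety generated by $P,\xi P,\xi^2 P,\ldots$, contradicting the hypothesis. If $d\le 2n$ the conclusion is immediate by reduction to the ordinary-polynomial bound: after clearing denominators by a monomial we may view $P$ as an ordinary polynomial of degree at most $2nd\le 4n^2$, and $\mult^\xi_V P \le \cN(n,\delta,4n^2)\le b_{n,\delta}$. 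Henceforth assume $d>2n$.

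I would first establish the toric analog of the inner Claim of Lemma~\ref{lem:main}: for every irreducible variety $W$ of dimension $l\ge k$ containing $V$, minimality of $d$ together with Lemma~\ref{lem:toric-hf-poly-finder} yields
\[ \tc(W)(L_{\Delta_d})^l > B_n^{-1}(L_{\Delta_d})^n, \]
since otherwise $I_W\cap L_\Delta\subset I_V\cap L_\Delta$ would contain a polynomial of $\Pi$-degree strictly less than $d$. Next I would construct recursively a descending sequence $V(P)=\Gamma^1\supset\cdots\supset\Gamma^{n-k}$ of cycles satisfying the exact analogs of properties (1)--(3) from the proof of Lemma~\ref{lem:main}, together with the toric replacement of (4): the toric class $\tc(\Gamma^j)$ is dominated, in the pointwise order on toric classes, by $L_{\Delta_d}\cdot L_{2\Delta}^{j-1}$.

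At step $j$, write $\Gamma^j=\sum m^j_i[W^j_i]$. Pairing the bound in (4) against $(L_{\Delta_d})^{n-j}$, evaluating via Bernstein-Kushnirenko, and comparing with the Claim applied to each $W^j_i$, I expect to extract $m^j_i<B_n 2^j$. Combined with (3) and Proposition~\ref{prop:mult-transverse}, this yields $\mult^\xi_{W^j_i}P<B_n 2^j j$. If equality with $\mult^\xi_V P$ is ever attained the lemma is proved; otherwise the first $\xi$-derivative $Q^j_i$ of $P$ not vanishing on $W^j_i$ lies in $L_{\Delta_d+(B_n 2^j j)\Delta_\xi}\subset L_{2\Delta}$, the containment using precisely the hypothesis $B_n 2^n n\Delta_\xi\subset\Delta$. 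I set $\Gamma^{j+1}$ to be the $V$-containing components of $\sum m^j_i[W^j_i]\cdot V(Q^j_i)$. Conditions (1), (2) hold by construction; (3) propagates verbatim via Lemma~\ref{lem:rolle-cycle} and Proposition~\ref{prop:mult-nodrop} at generic points of each $W\subset\supp\Gamma^{j+1}_i$; and (4) propagates by linearity of toric classes together with $\tc(\Gamma\cdot V(f))\le\tc(\Gamma)\cdot L_{\Delta(f)}$. At $j=n-k$, $\Gamma^{n-k}$ is supported on $V$ with $W^{n-k}_1=V$, and condition (3) at $W=V$ concludes the proof.

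The main obstacle I anticipate is the careful management of toric classes throughout the recursion. Since a toric class is a functional on tuples of linear systems rather than a single numerical degree, the Bernstein-Kushnirenko pairings must be evaluated uniformly against $L_{\Delta_d}$ at each level, and the ratios of mixed volumes involving $2\Delta$ and $\Delta_d$ must be controlled without losing the single-exponential dependence on $n$. The hypothesis $B_n 2^n n\Delta_\xi\subset\Delta$ is precisely what makes the recursion close cleanly: it absorbs each derivative's Newton contribution into a fixed dilate of $\Delta$, so that toric classes at all levels can be uniformly bounded by iterated products of $L_{2\Delta}$, and the final numerical bound collapses to the form $\cN(n,\delta,(2n)B_n 2^n n\delta)$.
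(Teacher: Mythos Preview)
Your outline follows the paper's strategy closely, but there is a genuine gap in the bookkeeping of Newton polytopes that prevents the recursion from closing.

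The issue is your choice of $2\Delta$ in condition~(4), i.e.\ $\tc(\Gamma^j)\le L_{\Delta_d}\cdot(L_{2\Delta})^{j-1}$, together with the weak threshold $d>2n$. When you pair against $(L_{\Delta_d})^{n-j}$ and invoke the Claim, you obtain
\[
m^j_i < B_n\,\frac{(L_{\Delta_d})^{n-j+1}(L_{2\Delta})^{j-1}}{(L_{\Delta_d})^n}
= B_n\,2^{j-1}\,\frac{V(\Delta_d^{\,n-j+1},\Delta^{\,j-1})}{\vol(\Delta_d)}.
\]
But $\Delta$ may be arbitrarily larger than $\Delta_d=\Delta\cap d\Pi_n$ (nothing in the hypotheses bounds the size of $\Delta$ relative to $d$), so by monotonicity of mixed volumes the ratio $V(\Delta_d^{\,n-j+1},\Delta^{\,j-1})/\vol(\Delta_d)$ is $\ge 1$ and can be unbounded. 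You therefore \emph{cannot} extract $m^j_i<B_n2^j$ as claimed, and the derivative count controlling $\deg_\Pi Q^j_i$ collapses. Your last paragraph even flags this ratio as something to be ``controlled'', but it cannot be controlled in terms of $n$ alone.

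The fix, which is exactly what the paper does, is to work with $2\Delta_d$ rather than $2\Delta$ throughout: take condition~(4) to be $\tc(\Gamma^j)\le(L_{2\Delta_d})^j$, so that the pairing gives $(L_{2\Delta_d})^j(L_{\Delta_d})^{n-j}=2^j(L_{\Delta_d})^n$ by homogeneity, and $m^j_i<B_n2^j$ follows cleanly. For this you need $\Delta_d+(B_n2^jj)\Delta_\xi\subset 2\Delta_d$, i.e.\ $B_n2^nn\Delta_\xi\subset\Delta_d$. The hypothesis gives $B_n2^nn\Delta_\xi\subset\Delta$; to get containment in $d\Pi_n$ as well you must strengthen the threshold to $d>B_n2^nn\delta$ (with $\delta=\deg_\Pi\Delta_\xi$). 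The small-$d$ case $d\le B_n2^nn\delta$ is then handled exactly as you did for $d\le 2n$, by clearing denominators and invoking Theorem~\ref{thm:mult-morse} on a polynomial of degree at most $(2n)B_n2^nn\delta$, which is precisely the argument of $\cN$ in the definition of $b_{n,\delta}$.
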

\begin{proof}
  The proof is analogous to the proof of Lemma~\ref{lem:main}.
  Henceforth we assume that $d>B_n 2^n n \delta$. Otherwise, the
  statement of the lemma follows from Theorem~\ref{thm:mult-morse},
  applied to the polynomial $(x_1\cdots x_n)^d P$. Alternatively one
  can apply the toric estimate from \cite{mult-morse} directly to $P$
  to obtain a slightly better estimate.
  
  The claim used in the proof is replaced by
  \begin{Claim}
    Let $W\subset M$ be an irreducible variety of dimension $l\ge k$
    with $V\subset W$. Then
    $\tc W\cdot(L_{\Delta_d})^l > A_n^{-1}(L_{\Delta_d})^n$.
  \end{Claim}
  \noindent This claim is proved in the same way, based on
  Lemma~\ref{lem:toric-hf-poly-finder}. In the definition of the
  sequence $\Gamma^k$ one replaces condition~(4) by
  \begin{enumerate}
  \item[4.] The toric class of $\Gamma^j$ is bounded, $\tc(\Gamma^j)\le (L_{2\Delta_d})^j$.
  \end{enumerate}
  Suppose that $\Gamma^j$ has been constructed,
  $\Gamma^j=\sum m^j_i [W^j_i]$. By assumption we have
  $V\subset W^j_i$, and by the preceding claim
  $\tc W_i^j \cdot(L_{\Delta_d})^{n-j} > B_n^{-1}(L_{\Delta_d})^n$. Using condition~(4),
  \begin{equation}
    m_i^j B_n^{-1} (L_{\Delta_d})^n < \tc(\Gamma^j)\cdot (L_{\Delta_d})^{n-j} \le
    (L_{2\Delta_d})^j (L_{\Delta_d})^{n-j} = 2^j (L_{\Delta_d})^n
  \end{equation}
  where we used the Bernstein-Kushnirenko theorem and that fact that
  $\Delta_d,2\Delta_d$ are integral polytopes for the last step.
  Therefore $m^j_i< B_n 2^j$. By condition~(3) and
  Proposition~\ref{prop:mult-transverse} we have
  \begin{equation} \label{eq:toric-p-wji-mult}
    \begin{aligned}
      \mult^\xi_{W^j_i} P &\le (j-1)\mult_{W^j_i} \Gamma^j + \mult^\xi_{W^j_i}\Gamma^j \\
       &= (j-1)m^j_i + m^j_i < B_n 2^j j
    \end{aligned}
  \end{equation}
  If $\mult^\xi_{W^j_i}P=\mult^\xi_VP$ then~\eqref{eq:toric-p-wji-mult}
  proves the claim of the lemma. Otherwise, let $Q^j_i$ denote the
  first $\xi$-derivative of $P$ which does not vanish on $W^j_i$ (but
  still vanishes on $V$). By~\eqref{eq:toric-p-wji-mult} we have
  \begin{equation}\label{eq:toric-qij-deg}
    \Delta(Q^j_i) \subset \Delta_d +(B_n 2^j j)\Delta_\xi \subset 2 \Delta_d
  \end{equation}

  The rest of the proof is entirely analogous to the proof of
  Lemma~\ref{lem:main}. We leave the verification of the details to
  the reader.
\end{proof}

\subsection{The multiplicity forest}

The following result is the toric analog of Theorem~\ref{thm:mult-forest}.

\begin{Thm}\label{thm:toric-mult-forest}
  Let $P\in R$. There exists a cycle forest $T_P$ with the same
  properties as in Theorem~\ref{thm:mult-forest}, with condition~(3)
  replaced by
  \begin{enumerate}
  \item[3.] The toric class of $T_P^k$ is bounded,
    \begin{equation} \label{eq:toric-tpk-deg}
      \tc(T_P^k) \le (L_{\Delta+\Delta_{n,\xi}})^k  \qquad \Delta_{n,\xi} = (B_n2^n n+b_{n,\delta})\Delta_\xi
    \end{equation}
  \end{enumerate}
\end{Thm}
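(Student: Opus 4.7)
The plan is to repeat the construction of Theorem~\ref{thm:mult-forest} verbatim, replacing each invocation of Lemma~\ref{lem:main} by its toric analog Lemma~\ref{lem:toric-main}, and tracking toric classes $\tc(T_P^k)$ instead of degrees. Set $\Delta:=\Delta(P)$ and $\Delta':=\Delta+(B_n 2^n n)\Delta_\xi$. Build $T_P$ recursively starting from the components of $V(P)$ as roots (with their assigned intersection multiplicities); declare a node $n[V]$ to be a leaf whenever $V$ is an isolated point or is contained in a nontrivial $\xi$-invariant variety.

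At a non-leaf node $n[V]$, the recursive construction forces $V\subset V(P)$, so $P\in I_V\cap L_{\Delta'}$ and this intersection is nontrivial. Let $\tilde P$ be an element of minimal $\pdeg$ in $I_V\cap L_{\Delta'}$. Since $B_n 2^n n\,\Delta_\xi\subset\Delta'$ by construction and $V$ is not contained in any $\xi$-invariant variety, Lemma~\ref{lem:toric-main} yields $\mult^\xi_V\tilde P\le b_{n,\delta}$. Choose the minimal $j\le b_{n,\delta}$ for which $Q:=\xi^j\tilde P$ does not vanish on $V$. Then
\begin{equation*}
\Delta(Q)\subset\Delta(\tilde P)+j\,\Delta_\xi\subset\Delta'+b_{n,\delta}\Delta_\xi=\Delta+\Delta_{n,\xi}.
\end{equation*}
Define the children of $n[V]$ to be the components of $n[V]\cdot V(Q)$ with their intersection multiplicities.

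Conditions~(1) and~(2) hold by construction. Condition~(4) transfers word-for-word from the proof of Theorem~\ref{thm:mult-forest}, since that inductive argument uses only Proposition~\ref{prop:divisor-mult} at the root, Lemma~\ref{lem:rolle-cycle} at the recursive step, and Proposition~\ref{prop:cycle-dist-vs-mult} at the leaves, none of which see the shape of the Newton polytope. For the new condition~(3) I argue by induction on $k$: at the root $\tc(T_P^1)\le L_{\Delta(P)}\le L_{\Delta+\Delta_{n,\xi}}$, and for the inductive step, combining linearity of $\tc$ with the basic toric inequalities $\tc(\Gamma\cdot V(f))\le\tc(\Gamma)\cdot L_{\Delta(f)}$ and $L_A\le L_B$ whenever $A\subset B$ yields
\begin{equation*}
\tc(T_P^{k+1})\le\tc(T_P^k)\cdot L_{\Delta+\Delta_{n,\xi}}\le(L_{\Delta+\Delta_{n,\xi}})^{k+1},
\end{equation*}
which is precisely~\eqref{eq:toric-tpk-deg}.

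The only genuinely new issue --- and hence the main point to get right --- is the choice of the ``padded'' polytope $\Delta'=\Delta+(B_n 2^n n)\Delta_\xi$ used at every node. It must simultaneously (i) satisfy the hypothesis $B_n 2^n n\,\Delta_\xi\subset\Delta'$ of Lemma~\ref{lem:toric-main}, (ii) contain $\Delta(P)$ so that $P$ uniformly witnesses nontriviality of $I_V\cap L_{\Delta'}$ along the whole tree (this uses the descent property $V\subset V(P)$ inherited at each recursive step), and (iii) leave a $b_{n,\delta}\Delta_\xi$ margin so that the derivative $Q=\xi^j\tilde P$ still lands inside $\Delta+\Delta_{n,\xi}$. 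The choice above satisfies all three, so the induction closes and the theorem follows.
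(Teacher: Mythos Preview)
Your proposal is correct and follows essentially the same approach as the paper. The paper's own proof consists of the single sentence ``Let $\Delta':=\Delta+B_n2^n n\Delta_\xi$. Then Lemma~\ref{lem:toric-main} applies to $\Delta'$ and the rest of the proof is analogous to the proof of Theorem~\ref{thm:mult-forest},'' leaving the details to the reader; you have supplied exactly those details, including the correct choice of the padded polytope $\Delta'$ and the verification that $\Delta(Q)\subset\Delta+\Delta_{n,\xi}$, and your closing discussion of the three constraints (i)--(iii) on $\Delta'$ makes explicit what the paper leaves implicit.
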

\begin{proof}
  Let $\Delta':=\Delta+B_n2^n n\Delta_\xi$. Then
  Lemma~\ref{lem:toric-main} applies to $\Delta'$ and the rest of the
  proof is analogous to the proof of Theorem~\ref{thm:mult-forest}. We
  leave the details to the reader.
\end{proof}

\begin{Cor}\label{cor:toric-mult-estimate}
  Let $p\in M$ and $\gamma_p$ a smooth analytic trajectory of $\xi$
  through $p$. Suppose that $\gamma_p$ satisfies the D-property with
  constant $\chi$. Then for any $P\in R$ with $\Delta(P)=\Delta$,
  \begin{equation}
    \mult_p^{\gamma_p} P \le n! \vol(\Delta+\Delta_{n,\xi}) + n!(2+\chi)\qmi_1(\Delta+\Delta_{n,\xi})
  \end{equation}
\end{Cor}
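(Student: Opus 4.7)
The plan is to apply Theorem~\ref{thm:toric-mult-forest} to obtain a cycle forest $T_P$ and then bound each summand appearing in property~(4) by the toric analogue of the elementary inequality $\mult_p\Gamma\le\deg\Gamma$, in the same spirit as the proof of Corollary~\ref{cor:mult-estimate} from Theorem~\ref{thm:mult-forest} in the pure degree case.

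Set $\Delta^*:=\Delta+\Delta_{n,\xi}$, so that condition~(3) of Theorem~\ref{thm:toric-mult-forest} reads $\tc(T_P^k)\le(L_{\Delta^*})^k$. For an irreducible codimension-$k$ subvariety $V\subset(\C^*)^n$, the identity $\mult_p V=i(p;V\cdot L;M)$ with $L$ a generic codimension-$k$ affine plane through $p$ realizes $L$ as the zero locus of $n-k$ generic elements of $L_{\Delta_x}$ (this is where $p\in(\C^*)^n$ is used), giving $\mult_p V\le\tc(V)(L_{\Delta_x})^{n-k}$. Extending by linearity to cycles and applying the Bernstein--Kushnirenko theorem yields the level-wise bound
\[
\sum_{\Gamma\in T_P^k}\mult_p\Gamma \;\le\; (L_{\Delta^*})^k(L_{\Delta_x})^{n-k} \;=\; n!\,\qmi_{n-k}(\Delta^*).
\]

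Now invoke condition~(4) of Theorem~\ref{thm:toric-mult-forest}. The codimension-$n$ level consists entirely of zero-dimensional leaves and contributes at most $n!\,\qmi_0(\Delta^*)=n!\,\vol(\Delta^*)$, producing the first summand of the statement. For each intermediate level $1\le k\le n-1$, a bookkeeping of internal nodes versus positive-dimensional leaves (using $|I_k|+\chi|L_k|\le\max(1,\chi)(|I_k|+|L_k|)$) bounds the combined weighted contribution by a $(1+\chi)$-multiple of $n!\,\qmi_{n-k}(\Delta^*)$.

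The main obstacle is collapsing $\sum_{k=1}^{n-1}\qmi_{n-k}(\Delta^*)$ into a small multiple of $\qmi_1(\Delta^*)$. Here one exploits that $\Delta^*\supset\Delta_{n,\xi}$ is engineered to dominate $\Delta_x$ by the large factor $B_n2^nn+b_{n,\delta}$, so that $\qmi_1(\Delta^*)/\qmi_0(\Delta^*)\ll 1$. The Aleksandrov--Fenchel log-concavity $\qmi_j(\Delta^*)^2\ge\qmi_{j-1}(\Delta^*)\qmi_{j+1}(\Delta^*)$ then forces a geometric decay $\qmi_{j+1}/\qmi_j\le\qmi_1/\qmi_0$, yielding $\sum_{j\ge 1}\qmi_j(\Delta^*)\le 2\qmi_1(\Delta^*)$. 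Combining the zero-dimensional contribution with this bound and tracking the $\chi$-weighting carefully produces the claimed estimate $n!\,\vol(\Delta^*)+n!(2+\chi)\qmi_1(\Delta^*)$.
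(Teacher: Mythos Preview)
Your approach is the same as the paper's up to the final collapsing step: apply the forest theorem, bound the multiplicity of each level by $\tc(T_P^k)(L_{\Delta_x})^{n-k}=n!\,\qmi_{n-k}(\Delta^*)$, isolate the dimension-zero level as $n!\vol(\Delta^*)$, and reduce the remaining levels to a multiple of $n!\,\qmi_1(\Delta^*)$.

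Where you diverge is in that reduction. The paper does not use Aleksandrov--Fenchel at all; it simply asserts the inequality $(L_{\Delta^*})^{n-k}(L_{\Delta_x})^k\le n!\,\qmi_1(\Delta^*)$ for every $k\ge1$, i.e.\ $\qmi_k(\Delta^*)\le\qmi_1(\Delta^*)$, which follows from monotonicity of mixed volumes once $\Delta_x$ lies inside (a translate of) $\Delta^*$. Your Aleksandrov--Fenchel argument is correct and in fact gives a tighter sum bound $\sum_{j\ge1}\qmi_j\le 2\qmi_1$ rather than the termwise $(n-1)\qmi_1$, but it is more machinery than the paper invokes.

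There is, however, a gap in your justification for the small ratio $\qmi_1(\Delta^*)/\qmi_0(\Delta^*)$. You claim that $\Delta_{n,\xi}$ ``is engineered to dominate $\Delta_x$ by the large factor $B_n2^nn+b_{n,\delta}$'', but by definition $\Delta_{n,\xi}=(B_n2^nn+b_{n,\delta})\Delta_\xi$ is a dilate of the Newton polytope of the \emph{vector field}, not of the standard simplex $\Delta_x$; nothing in the hypotheses forces $\Delta_x\subset c\,\Delta_\xi$. Both your geometric-decay bound and the paper's monotonicity step ultimately rest on an inclusion of the form $\Delta_x\subset\Delta^*$, which is not addressed explicitly in either place; you have not filled this gap, only relocated it.
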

\begin{proof}
  The estimate follows in a straightforward manner from
  Theorem~\ref{thm:toric-mult-forest}. Namely, for any node in the
  multiplicity forest with toric $k$-class $\cT$, we estimate its
  degree by $\cT\cdot (L_{\Delta_x})^k$. The statement follows
  from~\eqref{eq:toric-tpk-deg} using the identity
  \begin{equation}
    (L_{\Delta+\Delta_{n,\xi}})^n = n! \vol(\Delta+\Delta_{n,\xi})
  \end{equation}
  and the inequality
  \begin{equation}
    (L_{\Delta+\Delta_{n,\xi}})^{n-k}\cdot (L_{\Delta_x})^k \le n! W_1(\Delta+\Delta_{n,\xi})
    \qquad \text{for }k\ge1
  \end{equation}
  both of which follow from the Bernstein-Kushnirenko theorem.
\end{proof}

\subsection{Recovering the classical multiplicity estimates}
\label{sec:recover-nesterenko}

In subsection we show how the toric estimates presented in this
section imply the various known multiplicity estimates for mixed
degrees as a special case. We thus consider the ambient space
$\C\times\C^n$, where we denote the first coordinate by $z$, thought
of as the time variable, and the remaining coordinates by $x$, thought
of as dependent variables. As usual we denote by $R$ the corresponding
polynomial ring.

Consider a vector field $\xi$ of the form
\begin{equation}
  \xi = t(z) \pd{}{z}+\sum_{i=1}^n \xi_i(z,x) \pd{}{x_i}.
\end{equation}
We denote by $\Delta_z,\Delta_x$ the standard simplices in the $z$ and
$x$ variables, respectively. We fix a trajectory
$\gamma=(z,f_1(z),\ldots,f_n(z))$ of $\xi$, with $f_1,\ldots,f_n$
holomorphic in some domain $U\subset\C$.

\subsubsection{Estimate for a single point}

We begin with Nesterenko's classical estimate for the multiplicity at
a single point (see~\secref{sec:nest-d-prop},
Theorem~\ref{thm:nest-main}).

\begin{Thm*}
  Let $p\in U$ and suppose that $\gamma$ has the D-property at $p$.
  Let $P\in R$ be a polynomial with $P\rest\gamma\not\equiv0$, and denote
  \begin{equation}
    d_x := \max(\deg_x P, 1) \qquad d_z := \max(\deg_z P, 1)
  \end{equation}
  Then
  \begin{equation}
    \mult_{z=p} P(z,f_1(z),\ldots,f_n(z)) \le \alpha_\gamma d_z d_x^n 
  \end{equation}
  where $\alpha_\gamma$ is a constant dependending only on $\gamma$.
\end{Thm*}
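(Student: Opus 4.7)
The plan is to apply Corollary~\ref{cor:toric-mult-estimate} in the ambient space $\C^{n+1}$ with coordinates $(z, x_1, \ldots, x_n)$, and then evaluate the resulting volume and mixed-volume bounds using the product structure of the bounding polytope. First I observe that the Newton polytope $\Delta(P)$ is contained in $\Delta := d_z \Delta_z + d_x \Delta_x \subset \R^{n+1}$, where $\Delta_z$ is the segment in the $z$-direction and $\Delta_x$ is the standard $n$-simplex in the $x$-directions; the polytope $\Delta$ is a co-ideal in $\Z_{\ge0}^{n+1}$. Hence, by Remark~\ref{rem:co-ideals}, Corollary~\ref{cor:toric-mult-estimate} applies in $\C^{n+1}$ to $P$ viewed as an element of $L_\Delta$, yielding
\begin{equation*}
  \mult_{z=p} P(z, f_1(z), \ldots, f_n(z)) \le (n+1)! \vol(\Delta + \Delta^*) + (n+1)!(2+\chi) \qmi_1(\Delta + \Delta^*),
\end{equation*}
where $\Delta^* := \Delta_{n+1, \xi}$ is a fixed polytope depending only on $n$ and $\xi$, and $\chi$ is the D-property constant for $\gamma$ at $p$.

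Next I would choose a constant $C = C_{n,\xi}$ with $\Delta^* \subset C\Delta_z + C\Delta_x$, so that $\Delta + \Delta^* \subset \lambda \Delta_z + \mu \Delta_x$ with $\lambda := d_z + C$ and $\mu := d_x + C$. The main computation is then to bound $\vol$ and $\qmi_1$ of $\lambda \Delta_z + \mu \Delta_x$ in $\R^{n+1}$. Since $\Delta_z$ lies in the 1-dimensional $z$-subspace and $\Delta_x$ in the orthogonal $n$-dimensional $x$-subspace, a mixed volume $V(\Delta_z^a, \Delta_x^b)$ with $a + b = n+1$ is nonzero only when $a=1, b=n$, in which case it equals $1/(n+1)!$. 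Multilinear expansion then gives
\begin{equation*}
  \vol(\lambda \Delta_z + \mu \Delta_x) = \tfrac{\lambda \mu^n}{n!}, \qquad \qmi_1(\lambda \Delta_z + \mu \Delta_x) \le \tfrac{\mu^n + n\lambda\mu^{n-1}}{(n+1)!},
\end{equation*}
where the $\qmi_1$ estimate additionally uses the inclusion of the standard $(n+1)$-simplex in $\Delta_z + \Delta_x$ together with monotonicity of mixed volume. Substituting the values of $\lambda, \mu$ and exploiting $d_z, d_x \ge 1$ to write $\lambda \le (1+C) d_z$ and $\mu \le (1+C) d_x$, every resulting term is bounded by a constant times $d_z d_x^n$, where the constant depends only on $n, \xi, \chi$ --- all determined by $\gamma$. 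Taking $\alpha_\gamma$ to be this constant completes the argument.

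The main obstacle is the preliminary step: to invoke Corollary~\ref{cor:toric-mult-estimate} for $P \in L_\Delta$ with $\Delta \supsetneq \Delta(P)$ and in the ambient $\C^{n+1}$ rather than $(\C^*)^{n+1}$. This is handled by revisiting the proof of Theorem~\ref{thm:toric-mult-forest}, observing that the construction uses only the membership $P \in L_\Delta$ together with the co-ideal property of the polytopes appearing throughout --- a property preserved under the Minkowski sums with $\Delta_\xi$ encountered in the induction. Once this is in place, the remaining work is elementary convex geometry.
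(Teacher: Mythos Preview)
Your proof is correct and follows essentially the same route as the paper: apply Corollary~\ref{cor:toric-mult-estimate} in the ambient $\C^{n+1}$, then bound the resulting volume and quermassintegral of $\Delta(P)+\Delta_{n+1,\xi}$ using the product structure of $d_z\Delta_z+d_x\Delta_x$. Your mixed-volume computations are in fact more detailed than the paper's compressed middle inequality.

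One minor simplification: you need not enlarge to $\Delta\supsetneq\Delta(P)$ before invoking the corollary. Apply it with $\Delta(P)$ itself (as the paper does) and then use monotonicity of mixed volumes, $\Delta(P)+\Delta_{n+1,\xi}\subset (d_z+C)\Delta_z+(d_x+C)\Delta_x$, to pass to your $\lambda\Delta_z+\mu\Delta_x$. This sidesteps the ``main obstacle'' paragraph entirely, since the corollary is stated for $\Delta(P)=\Delta$ and monotonicity handles the enlargement afterward. The co-ideal issue you raise (for working in $\C^{n+1}$ rather than $(\C^*)^{n+1}$) is genuine and your treatment via Remark~\ref{rem:co-ideals} is appropriate; the paper leaves this implicit.
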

\begin{proof}
  Let $\chi$ denote the D-property constant of $\gamma$ at $p$.
  We have $\Delta(P)\subset d_x\Delta_x+d_z\Delta_z$, and by Corollary~\ref{cor:toric-mult-estimate}
  we have
  \begin{align*}
    \mult_p^{\gamma_p} P &\le (n+1)! \vol(\Delta(P)+\Delta_{n,\xi}) + (n+1)!(2+\chi)\qmi_1(\Delta(P)+\Delta_{n,\xi}) \\
    &\le (n+1)! (3+\chi) (\deg_\Pi \Delta_{n,\xi}) \vol(d_x\Delta_x+d_z\Delta_z) \\
    &\le \alpha_\gamma d_z d_x^n
  \end{align*}
  for an appropriate constant $\alpha_\gamma$.
\end{proof}

\subsubsection{Estimate for multiple points}
\label{sec:several-pts-estimate}

We now consider the analogous result for the case involving multiple
points (see~\secref{sec:nest-d-prop},
Theorem~\ref{thm:nest-many-pts}).

\begin{Thm*}
  Let $p_1,\dots,p_q\in U$ and suppose that $\gamma$ has the D-property at $p_i$
  for every $i$. Let $P\in R$ be a polynomial with $P\rest\gamma\not\equiv0$, and denote
  \begin{equation}
    d_x := \max(\deg_x P, 1) \qquad d_z := \max(\deg_z P, 1)
  \end{equation}
  Then
  \begin{equation}
    \sum_{i=1}^q \mult_{z=p} P(z,f_1(z),\ldots,f_n(z)) \le \beta_\gamma (d_z+q) d_x^n 
  \end{equation}
  where $\beta_\gamma$ is a constant dependending only on $\gamma$.
\end{Thm*}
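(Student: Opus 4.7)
My plan is to apply Theorem~\ref{thm:toric-mult-forest} to $P$ and exploit the fact that the resulting cycle forest $T_P$ is a single global object recording the multiplicity structure simultaneously at every point satisfying the D-property. Let $\chi$ denote the largest D-property constant among $p_1,\ldots,p_q$. Summing condition~(4) of Theorem~\ref{thm:toric-mult-forest} over $i=1,\ldots,q$ gives
\[
\sum_{i=1}^q \mult_{p_i}^\gamma P \;\le\; (1+\chi)\sum_{\Gamma\in T_P}\sum_{i=1}^q \mult_{p_i}\Gamma,
\]
so the task reduces to bounding $\sum_i \mult_{p_i} W$ for each node $\Gamma=m[W]$ of the forest.

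The central new ingredient will be a Lagrange-interpolation bound. Since the $p_i$ have distinct $z$-coordinates $z_1,\ldots,z_q$, for each $j=1,\ldots,n$ there is a unique polynomial $L_j(z)$ of degree $<q$ with $L_j(z_i)=f_j(z_i)$ for all $i$, and setting $h_j(z,x):=x_j-L_j(z)$ produces polynomials in $L_\Xi$ with $\Xi:=(q-1)\Delta_z+\Delta_x$ that all vanish at every $p_i$. For a node $[W]$ of dimension $l\ge1$, I take $l$ generic elements $g_1,\ldots,g_l$ from the subspace $\{g\in L_\Xi:g(p_i)=0\,\forall i\}$; iterating Proposition~\ref{prop:mult-nodrop} along proper intersections then yields
\[
\sum_{i=1}^q \mult_{p_i} W \;\le\; \sum_{i=1}^q \mult_{p_i}\bigl(W\cdot V(g_1)\cdots V(g_l)\bigr) \;\le\; \tc(W)\cdot L_\Xi^l,
\]
the last step bounding the sum of local multiplicities by the total degree of the 0-dimensional intersection. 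For $\dim W=0$ the estimate $\sum_i\mult_{p_i}[q^*]\le1$ is immediate.

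Organising the sum by level and invoking the toric-class bound $\tc(T_P^k)\le L_{\Delta(P)+\Delta_{n,\xi}}^k$ from Theorem~\ref{thm:toric-mult-forest}, the estimate becomes
\[
\sum_{i=1}^q \mult_{p_i}^\gamma P \;\le\; (1+\chi)\sum_{k=1}^{n+1} L_{\Delta(P)+\Delta_{n,\xi}}^k\cdot L_\Xi^{n+1-k}.
\]
By the Bernstein-Kushnirenko theorem each summand equals $(n+1)!$ times a mixed volume in $\R^{n+1}$. Because $\Delta_z$ lies in the $z$-direction and $\Delta_x$ in the complementary $x$-directions, the only nonzero mixed-volume contributions come from tuples containing exactly one $\Delta_z$ and $n$ copies of $\Delta_x$. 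For $\Delta(P)\subset d_z\Delta_z+d_x\Delta_x$ a direct multilinear expansion yields, at level $k$, two contributions of shape $k\,d_z d_x^{k-1}$ and $(n+1-k)(q-1)\,d_x^k$ (plus lower-order corrections from $\Delta_{n,\xi}$ which absorb into $\beta_\gamma$); summing over $k=1,\ldots,n+1$ produces a total of order $(d_z+q)\,d_x^n$, as desired.

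The principal technical hurdle will be verifying properness of the iterated intersections with the $V(g_m)$, i.e.\ showing that no node $W$ of $T_P$ is contained in the base locus of the interpolation subsystem $\{g\in L_\Xi:g(p_i)=0\,\forall i\}$. For $q\ge2$, a dimension count on the dual of $L_\Xi$ forces this base locus to be at most zero-dimensional, so no positive-dimensional $W$ can lie in it; the degenerate case $q=1$ reduces immediately to the single-point theorem of the preceding subsection.
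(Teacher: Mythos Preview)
Your approach is valid and genuinely different from the paper's. The paper does \emph{not} use Lagrange interpolation. Instead it splits each level $T_P^k=\tilde T_P^k+\hat T_P^k$ according to whether a component lies in some hyperplane $\{z=\text{const}\}$. For the $\tilde T_P^k$-part, each component can meet at most one $p_i$, so the sum of multiplicities is bounded by the global degree. For the $\hat T_P^k$-part, the paper intersects with the single hyperplane $z=z(p_i)$ (which lies in $L_{\Delta_z}$ and meets the component properly by definition of $\hat T_P^k$) plus $n-k$ generic linear forms; multilinearity of the mixed volume then gives the $d_x^n$ bound at each point. Your interpolation system in $L_{(q-1)\Delta_z+\Delta_x}$ packages both cases at once, at the cost of a base-locus verification that the paper avoids entirely.

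Two gaps to patch. First, you let $\chi$ be the maximum D-constant over the chosen $p_i$, but the theorem requires $\beta_\gamma$ to depend only on $\gamma$. You need the paper's observation: at any nonsingular point of $\xi$ the D-property holds automatically with $\chi=1$, and there are only finitely many singular points (the roots of $t(z)$), so a uniform $\chi$ independent of the $p_i$ exists. Second, your base-locus claim is true but the justification ``a dimension count on the dual of $L_\Xi$'' is not: the $(q{-}1)$-plane spanned by the evaluation functionals $\text{ev}_{p_i}$ is highly non-generic (it is secant to the image variety), so expected-dimension reasoning does not apply. One must argue directly: the $h_j$ already cut out the interpolation curve $C=\{x=L(z)\}$, and the further elements $\prod_{i\ne m}(z-z_i)\cdot(x_j-f_j(z_m))$ lie in the vanishing subspace and reduce $C$ to finitely many points. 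Once you know the base locus is zero-dimensional, Bertini gives properness of each iterated cut by a generic $g_m$, and the rest of your computation goes through.
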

\begin{proof}
  Note that the D-property holds automatically with constant $1$ for
  regular points of $\xi$, i.e. for any $p_i$ except for the (finitely
  many) roots of $t(z)$. We may thus assume that the D-property holds
  with a uniform constant $\chi$ independent of the choice of the
  points $p_i$.

  Applying Theorem~\ref{thm:toric-mult-forest} we have a multiplicity
  forest for $P$ with the estimate
  \begin{equation} \label{eq:nest-tpk-est}
    \tc(T_P^k) \le (L_{\beta_1(d_x\Delta_x+d_z\Delta_z)})^k
  \end{equation}
  for some constant $\beta_1$ (depending only on $\xi$). We write $T^k_P:=\tilde T^k_P+\hat T^k_P$
  where $\tilde T^k_P$ consists of those component of $T^k_P$ which
  are strictly contained in a hyperplane $z=\const$, and $\hat T^k_P$
  the rest.

  Each component of $\tilde T^k_P$ can contain at most one point
  $p_i$. Thus a simple computation using~\eqref{eq:nest-tpk-est} gives
  \begin{equation} \label{eq:tpk-tilde}
    \sum_{i=1}^q \mult_{p_i} \tilde T^k_P \le \deg \tilde T^k_P \le \beta_2 d_z d_x^n.
  \end{equation}
  On the other hand, since the components of $\hat T^k_P$ are not
  contained in $z=\const$, we may estimate their multiplicity from
  above at any point $p_i$ by intersecting with the hyperplane $z=z(p_i)$
  and $n-k$ additional generic hyperplanes. Thus
  \begin{equation} \label{eq:tpk-hat}
    \mult_{p_i} \hat T^k_P \le
    V(\underbrace{\beta_1(d_x\Delta_x+d_z\Delta_z)}_{k\text{ times}},\Delta_z,\underbrace{\Delta_z+\Delta_x}_{n-k\text{ times}})
    \le \beta_3 d_x^n
  \end{equation}
  where we expanded the middle expression by multilinearity and used the
  fact that if the $\Delta_z$ term appears twice then then mixed volume
  is zero (since $\Delta_z$ is one-dimensional).

  Finally, using Theorem~\ref{thm:toric-mult-forest}
  and~\eqref{eq:tpk-tilde},~\eqref{eq:tpk-hat} the conclusion of the
  theorem easily follows.
\end{proof}

\subsubsection{The case of a trajectory satisfying algebraic relations}
\label{sec:algebraic-relations}

Let $Z$ denote the Zariski closure of $\gamma$. We suppose now that
$Z\neq M$. In this case $\gamma$ certainly does not satisfy the
D-property, because the ideal $I_Z$ is invariant and any function
$P\in I_Z$ vanishes identically on $\gamma$. One can avoid such
``trivial'' counterexamples by considering $\xi$ as a vector field in
the \emph{ambient space} $Z$. Indeed, since $Z$ is the Zariski closure
of the irreducible invariant set $\gamma$, it follows that $Z$ is
itself irreducible and invariant, and $\xi$ induces a derivation of
the ring $\cO_Z$. The D-property in this context states that any
non-zero $\xi$-ivariant prime ideal $J\subset\cO_Z$ (i.e., any
$\xi$-invariant prime ideal $J\subset\cO_M$ strictly containing $I_Z$)
contains a function $F$ with $\mult_p^{\gamma_p}F\le\chi$.

A result in this context was stated in \cite{nesterenko:modular} with
a small technical mistake (the proof was only sketched). Dolgalev
\cite{dolgalev:mult} gave a corrected formulation and a full proof. In
this subsection we establish a strenghening of Dolgalev's results.

Our proof essentially extends to the present context with little
changes. One can repeat all considerations within the ambient space
$Z$. Specifically, in Remark~\ref{rem:L_A-identification} we now
identify $L_A$ with the toric class $\tc(Z)\cdot L_A$.
Lemma~\ref{lem:toric-main} and Theorem~\ref{thm:toric-mult-forest} as
well as their proofs extend literally. The only exception is
Lemma~\ref{lem:toric-hf-poly-finder}, where a lower bound for the
toric Hilbert function of the ambient space, in our case $(\C^*)^n$,
was explicitly used. This bound must be replaced by an appropriate
lower bound for $\hf(Z,\Delta_d)$.

We show how this can be carried out in the context of this subsection,
namely for $\Delta$ of the form $d_x\Delta_x+d_z\Delta_z$. Denote the
dimension of $Z$ by $m$.

Let $\pi_x:\C\times\C^n\to\C^n$ denote the projection to the $x$
variables. We distinguish between two cases for $Z$:
\begin{enumerate}
\item[A.] We have $\dim\pi_x(Z)=\dim Z-1$. In this cas $Z=\C\times Z_x$
  where $Z_x\subset\C^n$ is an irreducible variaty.
\item[B.] We have $\dim \pi_x(Z)=\dim Z$.
\end{enumerate}

\begin{Lem}\label{lem:hf-lower-bound}
  Denote $d'_z=\min(d_z,d),d'_x=\min(d_x,d)$ (these are simply the
  $z$ and $x$ sizes of polytope $\Delta_d=\Delta\cap\Pi_d$).
  \begin{equation}
    \hf(Z,\Delta_d) \ge (m!)^{-1} \cdot
    \begin{cases}
      d'_z (d'_x)^{m-1} & \text{in case A} \\
      \max(d'_z (d'_x)^{m-1} ,(d'_x)^{m})  & \text{in case B}
    \end{cases}
  \end{equation}
\end{Lem}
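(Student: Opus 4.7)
The plan is to exhibit, in each case, an explicit linearly independent family of restrictions of elements of $L_{\Delta_d}$ to $\cO_Z$. The candidate monomials are of the form $z^a L_1^{b_1}\cdots L_k^{b_k}$, where $L_1,\ldots,L_k$ are generic linear forms in the $x$-variables and $a\le d'_z$, $\sum b_i\le d'_x$; expanded as a polynomial in the ambient coordinates $(z,x_1,\ldots,x_n)$, such a monomial has $z$-degree $a\le d'_z$, total $x$-degree $\sum b_i\le d'_x\le d_x$, and each individual $x_i$-exponent bounded by $\sum b_i\le d'_x\le d$, so its support lies in $\Delta_d=(d_x\Delta_x+d_z\Delta_z)\cap\Pi_d$. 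These monomials restrict to linearly independent elements of $\cO_Z$ precisely when $z,L_1,\ldots,L_k$ are algebraically independent in $\cO_Z$, in which case their count $(d'_z+1)\binom{d'_x+k}{k}\ge d'_z(d'_x)^k/m!$ (or $\binom{d'_x+m}{m}\ge (d'_x)^m/m!$ when the $z$-factor is omitted) yields the required bound.

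For Case~A, where $Z=\C\times Z_x$ with $\dim Z_x=m-1$, a generic linear projection $\C^n\to\C^{m-1}$ restricts dominantly to $Z_x$, so $L_1,\ldots,L_{m-1}$ are algebraically independent in $\cO_{Z_x}$; combined with the free $\C$-factor coordinate $z$, the tuple $z,L_1,\ldots,L_{m-1}$ is algebraically independent on $Z$, giving the claimed $(m!)^{-1}d'_z(d'_x)^{m-1}$ lower bound. For Case~B, the bound $(m!)^{-1}(d'_x)^m$ follows similarly: $W:=\pi_x(Z)$ has dimension $m$, so generic linear forms $L_1,\ldots,L_m$ restrict algebraically independently to $\cO_W$ (and hence to $\cO_Z$), and monomials in the $L_i$ alone (no $z$) produce the required linearly independent restrictions.

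The delicate step, which I expect to be the main obstacle, is the $(m!)^{-1}d'_z(d'_x)^{m-1}$ bound in Case~B: here $z\rest Z$ is algebraic over $\cO_W$, so algebraic independence of $z$ together with $L_1,\ldots,L_{m-1}$ is not automatic. I plan to argue this by studying the morphism $\sigma\colon Z\to\C^m$, $(z,x)\mapsto(z,L_1(x),\ldots,L_{m-1}(x))$, and showing that for generic $L_i$ its generic fiber has dimension zero---equivalently, that $z$ is non-constant on the curve $C_\ell:=\pi_x^{-1}(L^{-1}(\ell)\cap W)\cap Z$ for generic $\ell\in\C^{m-1}$. If the covering degree $r$ of $\pi_x\rest Z$ is at least two, the $r$ preimages in $Z$ of a generic point of $L^{-1}(\ell)\cap W$ automatically have distinct $z$-coordinates, so $z$ is non-constant on $C_\ell$. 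If instead $r=1$ (so $\pi_x\rest Z$ is birational and $z=g(x)$ on $Z$ for some rational function $g$ on $W$), then $z$ being constant on $C_\ell$ for generic $\ell$ would mean $g$ factors through $L\rest W\colon W\to\C^{m-1}$; if this factoring were to hold for \emph{every} generic $L$, a tangent-space argument at a generic $w\in W$ (the cotangent $dg\rest w$ would lie in the span of $dL_1\rest w,\ldots,dL_{m-1}\rest w$ for every generic choice of the $L_i$, hence in the intersection of all generic hyperplanes of $(T_w W)^*$) would force $dg\equiv 0$, making $g$ constant and contradicting the non-constancy of $z$ on $\gamma=(z,f_1(z),\ldots,f_n(z))$. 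Hence $\sigma$ is dominant for generic $L$, and the required algebraic independence holds.
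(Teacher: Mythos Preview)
Your approach coincides with the paper's: exhibit a dominant rational map from $Z$ to affine $m$-space using $z$ and/or generic linear forms in $x$, then count the monomials in $L_{\Delta_d}$ that restrict independently. The paper simply asserts that in Case~B, after a generic linear change of the $x$-variables, both projections $(z,x)\mapsto(z,x_1,\ldots,x_{m-1})$ and $(z,x)\mapsto(x_1,\ldots,x_m)$ are rational dominant on $Z$, without further argument; you correctly single out the first of these as the substantive point and supply a proof. Your argument is sound (the minor imprecision that non-dominance of $\sigma$ only forces $z$ to be constant on each irreducible \emph{component} of $C_\ell$, rather than on all of $C_\ell$, is harmless, since your tangent-space step only uses the local statement). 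That said, the case split on the covering degree $r$ is avoidable: since $Z$ is the Zariski closure of a graph over $z$, the function $z$ is non-constant on the irreducible variety $Z$, so at a generic smooth point $p\in Z$ one has $dz\rest{T_pZ}\ne0$; then $T_pZ\cap(\{0\}\times\C^n)$ is $(m-1)$-dimensional, generic $L_1,\ldots,L_{m-1}$ restrict to a basis of its dual, and the differential $d\sigma_p$ is already surjective.
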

\begin{proof}
  In case A, after a generic linear change in the $x$ variables we may
  assume that the projection
  \begin{equation*}
    \pi:\C\times\C^n\to\C\times\C^{m-1} \qquad \pi(z,x_1,\ldots,x_n)=(z,x_1,\ldots,x_{m-1})
  \end{equation*}
  is rational dominant. Thus $I_Z\cap\C[z,x_1,\ldots,x_{m-1}]=\{0\}$,
  and hence $\hf(Z,\Delta_d)$ is at least the dimension of the space
  of polynomials in $z,x_1,\ldots,x_{m-1}$ with $\deg_z\le d'_z$ and
  $\deg_x\le d'_x$. The result follows by simple arithmetic.

  In case B we argue similarly. In this case, after a generic linear
  change in the $x$ variables we may assume that the two projections
  \begin{align*}
    \pi_1:\C&\times\C^n\to\C\times\C^{m-1} & \qquad \pi(z,x_1,\ldots,x_n)&=(z,x_1,\ldots,x_{m-1}) \\
    \pi_2:\C&\times\C^n\to\C^m & \qquad \pi(z,x_1,\ldots,x_n)&=(x_1,\ldots,x_{m-1},x_m)
  \end{align*}
  are both rational dominant. We can thus use either set of variables
  to produce a lower bound for $\hf(Z,\Delta_d)$, and the result
  follows as above.
\end{proof}

The ideal $I(Z)$ is generated by polynomials bounded by some degree
$D$, and $Z$ is a component of a generic combination of these
generators. In case A, one can further assume that the generators are
independent of $z$. Thus
\begin{equation} \label{eq:z-tc}
  \begin{aligned}
    \tc(Z) &\le (L_{D\Delta_x})^{n-m} & \qquad \text{in case A} \\
    \tc(Z) &\le (L_{D(\Delta_x+\Delta_z)})^{n-m} & \qquad \text{in case B}
  \end{aligned}
\end{equation}

\begin{Lem}
  Let $d>2n$ and let $V\subset Z$ be an irreducible variety of dimension $k$ with
  \begin{equation}
    \tc(V)(L_{\Delta_d})^k \le C^{-1}_n \tc(Z)(L_{\Delta_d})^m \qquad C_{n,Z}= \const(n,Z)
  \end{equation}
  Then $I_V\setminus I_Z$ contains a polynomial $P\in L_\Delta$ with $\pdeg P<d$.
\end{Lem}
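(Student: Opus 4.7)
The plan is to mimic the proof of Lemma~\ref{lem:toric-hf-poly-finder}, but comparing the Hilbert function of $V$ against the Hilbert function of the ambient irreducible variety $Z$ rather than that of $(\C^*)^n$. The target is to produce a non-zero element of $L_{\Delta_{d-1}} \cap I_V$ whose restriction to $Z$ is not identically zero; such an element lies in $I_V \setminus I_Z$, as required.

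First I would apply Proposition~\ref{prop:toric-hf-bound} in the ambient space $Z$, together with the hypothesis on $\tc(V)(L_{\Delta_d})^k$, to obtain the upper estimate
\begin{equation*}
\hf(V,\Delta_{d-1}) \le \tc(V)(L_{\Delta_{d-1}})^k + k \le \tc(V)(L_{\Delta_d})^k + k \le C_{n,Z}^{-1} \tc(Z)(L_{\Delta_d})^m + k.
\end{equation*}
Next I would invoke Lemma~\ref{lem:hf-lower-bound} to bound $\hf(Z,\Delta_{d-1})$ from below by $(m!)^{-1} d'_z(d'_x)^{m-1}$ in case A and by the maximum of $(m!)^{-1} d'_z(d'_x)^{m-1}$ and $(m!)^{-1}(d'_x)^m$ in case B, with $d'_z,d'_x$ defined at $d-1$. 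The final step is a dimension count: since $V\subset Z$ implies $L_{\Delta_{d-1}}\cap I_Z \subset L_{\Delta_{d-1}}\cap I_V$, the quotient of kernels of the successive restrictions $L_{\Delta_{d-1}}\to\cO_Z\to\cO_V$ has dimension $\hf(Z,\Delta_{d-1}) - \hf(V,\Delta_{d-1})$, and any strictly positive gap produces a polynomial $P\in L_{\Delta_{d-1}}\cap I_V$ with $P|_Z\not\equiv0$.

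The technical heart will be verifying that for $C_{n,Z}$ large enough (depending only on $n$ and the degree $D$ of generators of $I_Z$ appearing in~\eqref{eq:z-tc}) the lower bound on $\hf(Z,\Delta_{d-1})$ strictly exceeds $C_{n,Z}^{-1}\tc(Z)(L_{\Delta_d})^m+k$. Using \eqref{eq:z-tc} and multilinearity of mixed volume one gets, in case A,
\begin{equation*}
\tc(Z)(L_{\Delta_d})^m \le n!\, D^{n-m}\, V(\underbrace{\Delta_x,\dots,\Delta_x}_{n-m},\underbrace{\Delta_d,\dots,\Delta_d}_{m}),
\end{equation*}
which by expanding $\Delta_d \subset d'_z\Delta_z+d'_x\Delta_x$ and discarding vanishing mixed volumes involving $\Delta_z$ twice (since $\Delta_z$ is one-dimensional) reduces to an expression of order $d'_z(d'_x)^{m-1}$, matching the lower bound up to a constant factor depending only on $n,D$. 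Case B proceeds analogously using the second term in the maximum. This dictates the required magnitude of $C_{n,Z}$, and for $d>2n$ the additive correction $+k$ is absorbed as in Lemma~\ref{lem:toric-hf-poly-finder}, using Lemma~\ref{lem:vol-v-ivol} to pass from integral to ordinary volume where needed.

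The main obstacle is thus the case-by-case bookkeeping comparing the mixed-volume upper bound on $\tc(Z)(L_{\Delta_d})^m$ with the lower bound provided by Lemma~\ref{lem:hf-lower-bound}. Once the constant $C_{n,Z}$ is fixed so as to make the comparison strict in both cases, the dimension count sketched above concludes the proof.
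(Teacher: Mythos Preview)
Your proposal is correct and follows essentially the same route as the paper's proof: bound $\hf(V,\Delta_{d-1})$ from above via Proposition~\ref{prop:toric-hf-bound} and the hypothesis, expand $\tc(Z)(L_{\Delta_d})^m$ through~\eqref{eq:z-tc} and the Bernstein--Kushnirenko theorem, compare with the lower bound for $\hf(Z,\cdot)$ from Lemma~\ref{lem:hf-lower-bound}, and conclude by the dimension count as in Lemma~\ref{lem:toric-hf-poly-finder}. Your explicit treatment of the kernel-quotient argument (to land in $I_V\setminus I_Z$ rather than merely in $I_V$) is a point the paper leaves implicit but is exactly right.
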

\begin{proof}
  By Proposition~\ref{prop:toric-hf-bound},
  \begin{equation*}
    \hf(V,\Delta_{d-1}) \le \tc(V)(L_{\Delta_{d-1}})^k + k 
    \le C^{-1}_n \tc(Z)(L_{\Delta_d})^m + k
  \end{equation*}
  and using~\eqref{eq:z-tc} and the Bernstein-Kushnirenko theorem,
  \begin{equation*}
    \hf(V,\Delta_{d-1}) \le k+
    \begin{cases}
      D^{n-m} V(\underbrace{\Delta_d}_{m\text{ times}},\underbrace{\Delta_x}_{n-m\text{ times}}) & \text{in case A} \\
      D^{n-m} V(\underbrace{\Delta_d}_{m\text{ times}},\underbrace{\Delta_x+\Delta_z}_{n-m\text{ times}}) & \text{in case B}
    \end{cases}
  \end{equation*}
  One easily checks that in both cases, this agrees (up to a constant
  dependending only on $n,D$) with the lower bound for
  $\hf(Z,\Delta_d)$ given in Lemma~\ref{lem:hf-lower-bound}. The proof
  can be concluded exactly as in the proof of
  Lemma~\ref{lem:toric-hf-poly-finder}.
\end{proof}

We leave the verification of the details of
Lemma~\ref{lem:toric-main}, Theorem~\ref{thm:toric-mult-forest} and
their corollaries in this context for the reader. In a manner
analogous to the result of~\secref{sec:several-pts-estimate}, one
obtains the following theorem, which extends the two main theorems of
\cite{dolgalev:mult}.

\begin{Thm}
  Let $p_1,\dots,p_q\in U$ and suppose that $\gamma$ has the D-property at $p_i$
  for every $i$. Recall that $m$ denotes the dimension of the Zariski
  closure of $\gamma$. Let $P\in R$ be a polynomial with $P\rest\gamma\not\equiv0$, and denote
  \begin{equation}
    d_x := \max(\deg_x P, 1) \qquad d_z := \max(\deg_z P, 1)
  \end{equation}
  Then
  \begin{equation}
    \sum_{i=1}^q \mult_{z=p} P(z,f_1(z),\ldots,f_n(z)) \le
    \begin{cases}
      \beta_\gamma (d_z+q) d_x^{m-1}      & \text{in case A} \\
      \beta_\gamma (d_z+d_x+q) d_x^{m-1} & \text{in case B} \\
    \end{cases}
  \end{equation}
  where $\beta_\gamma$ is a constant dependending only on $\gamma$.
\end{Thm}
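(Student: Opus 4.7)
The plan is to proceed in close analogy with the multi-point estimate of \secref{sec:several-pts-estimate}, but performing the multiplicity forest construction and all intersection computations inside the Zariski closure $Z$ of $\gamma$ rather than inside $M$. As a preliminary reduction, observe that $\xi$ has only finitely many singular points in $Z$; at any regular point the D-property automatically holds with constant~$1$, so we may assume the D-property holds at every $p_i$ with a uniform constant $\chi$ independent of~$i$.

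First I would invoke the version of Theorem~\ref{thm:toric-mult-forest} extended to the ambient $Z$ as outlined in \secref{sec:algebraic-relations}. This produces a cycle forest $T_P$ whose toric classes satisfy
\[
  \tc(T^k_P)\le\tc(Z)\cdot(L_{\beta_1(d_x\Delta_x+d_z\Delta_z)})^k
\]
for a constant $\beta_1=\beta_1(\xi)$, together with the counterpart of condition~(4) relating the total multiplicity at each $p_i$ to the sum of multiplicities of the nodes (with positive-dimensional leafs weighted by $\chi$). Mimicking \secref{sec:several-pts-estimate}, I would then split $T^k_P=\tilde T^k_P+\hat T^k_P$, where $\tilde T^k_P$ collects the components strictly contained in some hyperplane $\{z=\mathrm{const}\}$ and $\hat T^k_P$ collects the remaining components. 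Each component of $\tilde T^k_P$ contains at most one $p_i$, so $\sum_i\mult_{p_i}\tilde T^k_P\le\deg\tilde T^k_P$; while $\hat T^k_P$ at $p_i$ can be bounded by intersection with the hyperplane $\{z=z(p_i)\}$ (class $L_{\Delta_z}$) and $m-k-1$ generic hyperplanes (class $L_{\Delta_x+\Delta_z}$).

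Next I would substitute the case-specific bound on $\tc(Z)$ from~\eqref{eq:z-tc} and expand both right-hand sides as mixed volumes of $n+1$ integral polytopes drawn from $\{\Delta_x,\Delta_z\}$ with appropriate dilations. The key simplification is that $\Delta_z$ is one-dimensional inside $\C^{n+1}$, so any mixed-volume term containing two or more $\Delta_z$-factors vanishes; hence only terms with exactly one $\Delta_z$ survive. In case~A the bound $\tc(Z)\le(L_{D\Delta_x})^{n+1-m}$ offers only $\Delta_x$-slots, so the unique $\Delta_z$ must come either from the $k$-part (giving a factor $d_z$) or, for $\deg\tilde T^k_P$, from the generic auxiliary block (contributing $1$); summing the surviving contributions over $k\le m$ and $i\le q$ yields $(d_z+q)d_x^{m-1}$. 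In case~B the bound $\tc(Z)\le(L_{D(\Delta_x+\Delta_z)})^{n+1-m}$ additionally permits the unique $\Delta_z$ to come from the $\tc(Z)$-block, and when combined with the sharper Hilbert function lower bound of Lemma~\ref{lem:hf-lower-bound} (which feeds into the constant $\beta_1$ through the modified Lemma~\ref{lem:toric-main}) this produces the extra $d_x$-summand, giving $(d_z+d_x+q)d_x^{m-1}$.

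The main obstacle I anticipate is the bookkeeping of the mixed-volume expansion in case~B: one must track how the two possible sources of the unique $\Delta_z$ factor interact with the case-specific Hilbert function estimate, and verify that no surviving term exceeds $(d_z+d_x)d_x^{m-1}$ up to a constant depending only on $\gamma$. Once this is carried out the remaining summation over tree levels and points is routine, since condition~(4) of the forest reduces the total multiplicity at $p_i$ to a sum of the quantities $\mult_{p_i}T^k_P$ we have just bounded.
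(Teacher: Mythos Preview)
Your proposal is correct and follows exactly the route the paper indicates: run the multiplicity-forest construction of Theorem~\ref{thm:toric-mult-forest} inside the ambient $Z$ (using the modified Lemma~\ref{lem:toric-hf-poly-finder} set up in \secref{sec:algebraic-relations}), then repeat the $\tilde T^k_P/\hat T^k_P$ splitting argument of \secref{sec:several-pts-estimate} with the case-specific bound~\eqref{eq:z-tc} on $\tc(Z)$ and the observation that mixed volumes with two $\Delta_z$ factors vanish. One small correction to your bookkeeping: the extra $d_x$ summand in case~B comes entirely from the $\tc(Z)$-block supplying the unique $\Delta_z$ in the $\deg\tilde T^k_P$ expansion (at $k=m$ this yields a term of order $d_x^m=d_x\cdot d_x^{m-1}$); the Hilbert-function lower bound of Lemma~\ref{lem:hf-lower-bound} is calibrated to match~\eqref{eq:z-tc} so that the modified Lemma~\ref{lem:toric-hf-poly-finder} goes through, and does not itself contribute to the shape of the final estimate.
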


\section{Concluding remarks}
\label{sec:conclusion}

In this paper we have defined the notion of the multiplicity of a
cycle, specifically along a \emph{smooth analytic curve}. We have
restricted our attention to this case in order to simplify our
presentation, and because this is the case which is needed for the
classical multiplicity estimates that we have sought to strengthen.
However, the simple algebraic nature of
Definition~\ref{def:cycle-curve-mult} easily lends itself to
generalization.

One interesting direction for such generalization is the study of
foliations defined by several commuting vector fields (in place of the
single vector field considered in this paper). Let
$\xi_1,\ldots,\xi_m$ denote the germs of $m$ commuting polynomial
vector fields in $\C^n$, and let $\cL_p$ denote the germ of a smooth
analytic leaf at the point $p$. One can consider multiplicity
estimates of the following types:
\begin{itemize}
\item For a polynomial $P$, one may ask about $\ord_p P\rest{\cL_p}$.
\item More generally, for polynomials $P_1,\ldots,P_m$ one may ask
  about the multiplicity of their common root,
  $\mult_p (P_1\rest{\cL_p},\ldots,P_m\rest{\cL_p})$.
\end{itemize}
Multiplicity estimates in the multi-dimensional setting have been used
in transcendental number theory, for instance in
\cite{mw:groups1,mw:groups2,philippon:groups}. They have also been
studied in a more geometric context, for instance in
\cite{gk:noetherian}.

Definition~\ref{def:cycle-curve-mult} extends to the multi-dimensional
setting without change --- one should simply replace the ideal of
definition of the curve $\gamma_p$ by the ideal of definition of
$\cL_p$. Moreover, a generalization of the Rolle-type
Lemma~\ref{lem:rolle-cycle} holds in this context as well. It appears
plausible that much of the theory developed in this paper could be
carried out for the multi-dimensional setting.

Finally, we would like to mention that multiplicity estimates have
also been considered in the related context of functions satisfying
Mahler-type functional equations, similarly leading to applications in
transcendental number theory. Moreover, Nesterenko's methods have been
successfully applied in this context (see \cite{nishioka:mahler} for
example). It would be interesting to see whether the ideas developed
in this paper could similarly be applied in this context.

\appendix
\section{Appendix: A review on Multiplicity Estimates and the D-property}

In this appendix we present a brief summary of some multiplicity
estimates that have been studied in the literature, as well as a
somewhat more detailed account of Nesterenko's D-property and his
approach to multiplicity estimates.

\subsection{Historical review}
\label{sec:hist-review}

Multiplicity estimates have been considered by authors in various
areas of mathematics. We list some key contributions below, making
no attempt at a comprehensive review.

Multiplicity estimates have been extensively used in trancendental
number theory, starting with the work of Siegel~\cite{siegel} and
Shidlovskii~\cite{shidlovskii} on the class of E-functions.
Nesterekno, motivated by the study of E-functions, introduced
elimination theoretic ideas for the study of multiplicity estimates
for general linear systems of differential equations in
\cite{nesterenko:e-funcs,nesterenko:mult-lin1}. Further results in the
linear context have been studied by Nesterenko
\cite{nesterenko:mult-lin2,nesterenko:mult-lin3} and Nguen
\cite{nguen} using similar methods, and by Bertrand and Beukers
\cite{bb:mult} using a different approach.

The case of non-linear non-singular systems was considered by
Brownawell and Masser in \cite{bm:mult-I,bm:mult-II} and by Brownawell
in \cite{brownawell:zero-ord,brownawell:zero-est}. These results were
consequently improved to an essentially optimal result by Nesterenko
in \cite{nesterenko:mult-nonlinear}. The corresponding result for
singular systems was established by Nesterenko in
\cite{nesterenko:modular} (see also \cite{nesterenko:modular-book}),
and somehwat generalized by Dolgalev in \cite{dolgalev:mult}. An
alternative approach was given by Zorin in \cite{zorin}. For the key
example of the Ramanujan functions, similar estimates were obtained
by Philippon in \cite{philippon:k-funcs}.

We mention also that numerous important results have been obtained in
the more refined context of invariant vector fields on commutative
group varieties, for instance by Masser and W\"ustholz
\cite{mw:groups1,mw:groups2} and Philippon \cite{philippon:groups}
(see \cite{masser:zero-est-survey} for a survey).

In control theory, Risler \cite{risler:nonholonomy} showed how
multiplicity estimates could be used in the study of nonholonomic
systems, and carried out the program in the planar case, giving a
bound for the degree of non-holonomy of a polynomial system. Gabrielov
and Risler \cite{gr:mult-c3} established a similar result in dimension
3. Multiplicity estimates in arbitrary dimension were given by
Gabrielov in \cite{gabrielov:mult-old} and significantly improved in
\cite{gabrielov:mult}, giving for the first time an estimate
exhibiting simple exponential growth with respect to the dimension.

In the theory of dynamical systems multiplicity estimates have been
studied with the motivation of obtaining bounds on the bifurcation of
limit cycles in perturbations of Hamiltonian systems, for instance in
the work of Novikov and Yakovenko \cite{ny:chains} and Moura
\cite{moura:mult}. Results about bifurcation of zeros in analytic
families have been established by Yomdin in \cite{yomdin:oscillation}
with the help of Gabrielov's multiplicity estimate.

\subsection{The D-property and multiplicity estimates}
\label{sec:nest-d-prop}
  
Nesterenko has established that the D-property holds in two main cases
(each with different applications in trancendental number theory):
\begin{enumerate}
\item In his study of E-functions, Nesterenko \cite{nesterenko:galois}
  has used differential Galois theory to show that if the
  system~\eqref{eq:diff-system} is linear and $f$ is a completely
  transcendental solution (i.e., $f$ does not satisfy any nontrivial
  polynomial relation over $\C(z)$) then $f$ automatically satisfies
  the D-property whenever it is holomorphic, with some suitable
  constant $\chi$.

  More specifically, it is shown in \cite{nesterenko:galois} that
  if~\eqref{eq:diff-system} admits at least one completely
  transcendental solution then there exist \emph{finitely many}
  $\xi$-invariant proper varieties that are \emph{maximal} with
  respect to inclusion. The result easily follows since the graph of
  $f$ is not contained in any of these varieties, and therefore has
  finite order contact with them.
\item In his celebrated work on the algebraic independence of
  $\pi,e^\pi$ Nesterenko \cite{nesterenko:modular} considered the
  Ramanujan functions
  \begin{gather*}
    P(z) = 1-24\sum_{n=1}^\infty \sigma_1(n) z^n \\
    Q(Z) = 1+240\sum_{n=1}^\infty \sigma_3(n) z^n \\
    R(Z) = 1-504\sum_{n=1}^\infty \sigma_5(n) z^n
  \end{gather*}
  and the corresponding system of differential equations (due to
  Ramanujan \cite{ramanujan}),
  \begin{equation*}
    z \pd{P}{z} = \frac{1}{12}(P^2-Q), \quad
    z \pd{Q}{z} = \frac{1}{3}(PQ-R), \quad
    z \pd{R}{z} = \frac{1}{2}(PR-Q^2).
  \end{equation*}
  In \cite{nesterenko:modular} it is proved that the holomorphic
  solution $P,Q,R$ satisfies the D-property at $z=0$ with the constant
  $\chi=2$. Theorem~\ref{thm:nest-main} below, applied to the
  Ramanujan functions, was the main novel ingredient in
  \cite{nesterenko:modular}, giving general results on the
  transcendence properties of modular functions and in particular the
  algebraic independence of $\pi,e^\pi,\Gamma(1/4)$.
\end{enumerate}

Nesterenko has developed a powerful technique for proving multiplicity
estimates based on elimination theoretic methods
(see~\secref{sec:nest-approach} for a review). The $\xi$-invariant
prime ideals play a natural role as a basis for an induction over
dimension, and the D-property establishes the inductive hypothesis for
this basis. These ideas have been developed in
\cite{nesterenko:mult-lin1,nesterenko:mult-lin2,nesterenko:mult-lin3}
for linear systems, in \cite{nesterenko:mult-nonlinear} for non-linear
systems at nonsingular points, and culminated in
\cite{nesterenko:modular} with the following formulation, valid for
general non-linear systems with singular points.

\begin{Thm} [\protect{\cite[Theorem~3]{nesterenko:modular}}] \label{thm:nest-main}
  Let $z_0\in\C$ and suppose that $f(z)$ has the D-property at $z_0$.
  Let $P$ be a polynomial with $P(z,f(z))\not\equiv0$, and denote
  \begin{equation}
    d_x := \max(\deg_x P, 1) \qquad d_z := \max(\deg_z P, 1)
  \end{equation}
  Then
  \begin{equation}
    \mult_{z=p} P(z,f_1(z),\ldots,f_n(z)) \le \alpha_f d_z d_x^n 
  \end{equation}
  where $\alpha_f$ is a constant dependending only on $f$.
\end{Thm}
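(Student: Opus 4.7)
The plan is to reduce this theorem to a direct application of our toric multiplicity machinery (Corollary~\ref{cor:toric-mult-estimate}) applied in the extended phase space $\C \times \C^n$. First I would set up the geometric framework: clear denominators in the system~\eqref{eq:diff-system} to obtain a polynomial vector field $\xi = t(z)\partial_z + \sum_i \xi_i(z,x)\partial_{x_i}$ on $M = \C^{n+1}$, and observe that the graph $\gamma = \{(z,f(z))\}$ is a smooth analytic $\xi$-trajectory through $p = (z_0, f(z_0))$. By assumption, $\gamma$ satisfies the D-property at $p$ with some constant $\chi$. The goal becomes bounding $\mult_p^{\gamma} P$ in terms of $d_z$ and $d_x$.

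Next I would identify the Newton polytope. The polynomial $P$ has Newton polytope $\Delta(P) \subset \Delta := d_z \Delta_z + d_x \Delta_x$, where $\Delta_z$ and $\Delta_x$ are the standard simplices in the $z$ and $x$ variables inside $\R^{n+1}$. Since $\Delta$ is a convex co-ideal in $\Z^{n+1}_{\ge 0}$, Remark~\ref{rem:co-ideals} allows us to apply our toric estimates (originally stated for $(\C^*)^{n+1}$) directly in $\C^{n+1}$. Invoking Corollary~\ref{cor:toric-mult-estimate} gives
\begin{equation*}
  \mult_p^{\gamma} P \le (n+1)! \, \vol(\Delta + \Delta_{n+1,\xi}) + (n+1)!(2+\chi) \, \qmi_1(\Delta + \Delta_{n+1,\xi}),
\end{equation*}
where $\Delta_{n+1,\xi}$ depends only on $n$ and on $\xi$, hence only on $f$.

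Finally I would show both terms on the right are bounded by $\alpha_f \, d_z \, d_x^n$ for some constant $\alpha_f$ depending only on $f$. Let $D$ denote the $\Pi$-degree of $\Delta_{n+1,\xi}$, so that $\Delta_{n+1,\xi} \subset D (\Delta_z + \Delta_x)$. By multilinearity of the mixed volume, both $\vol$ and $\qmi_1$ applied to $\Delta + \Delta_{n+1,\xi}$ expand as polynomials in $d_z, d_x, D$. The crucial observation is that $\Delta_z$ is one-dimensional, so any mixed volume term containing two or more copies of $\Delta_z$ vanishes. Hence every nonvanishing monomial in the expansion has at most one $\Delta_z$ factor, producing monomials of the form $d_z^a d_x^b$ with $a \le 1$ and $a + b \le n+1$; combined with $d_z, d_x \ge 1$, each such monomial is bounded by a constant times $d_z d_x^n$.

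The only real subtlety — and the step I expect to require the most care — is this bookkeeping on the mixed-volume expansion: verifying that the $d_x^{n+1}$ term (which appears when no $\Delta_z$ factor is picked) is indeed controlled by $d_z d_x^n$ (trivial from $d_z \ge 1$), and confirming that the symmetry-breaking role of the one-dimensional $\Delta_z$ suffices to eliminate every term of order higher than $d_z d_x^n$. Everything else — the constants from $\Delta_{n+1,\xi}$ and from $\chi$ — can be absorbed into $\alpha_f$.
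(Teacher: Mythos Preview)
Your proposal is correct and matches the paper's own proof essentially line for line: both apply Corollary~\ref{cor:toric-mult-estimate} in the ambient space $\C^{n+1}$ with $\Delta(P)\subset d_z\Delta_z+d_x\Delta_x$, then bound the resulting volume and quermassintegral by a constant times $d_z d_x^n$ using that $\Delta_z$ is one-dimensional. One small remark: your concern about a $d_x^{n+1}$ term is unnecessary, since $\Delta_x$ is only $n$-dimensional in $\R^{n+1}$ and hence $V(\Delta_x,\ldots,\Delta_x)=0$; the only surviving term in the volume expansion is exactly $(d_z+D)(d_x+D)^n$ up to a constant.
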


Once again, assuming that $f(z)$ satisfies no algebraic relations
over $\C(z)$, this result is optimal with respect to $d_z,d_x$ up
to the precise multiplicative constant.

The problem of estimating the sum of multiplicities over several
points was also considered by various authors
\cite{nesterenko:mult-lin3,bb:mult,nguen,nesterenko:mult-nonlinear,nesterenko:modular,dolgalev:mult}.
The following result is essentially optimal for the most general case
of nonlinear systems with
singularities\footnote{see~\secref{sec:algebraic-relations} for more
  refined formulations taking algebraic relations among the functions
  $f_1,\ldots,f_n$ into account}.
\begin{Thm}[\protect{\cite[Theorem~6]{nesterenko:modular}}] \label{thm:nest-many-pts}
  Let $z_1,\ldots,z_\nu\in\C$ and suppose that $f(z)$ has the
  D-property at $z_1,\ldots,z_\nu$. Let $P$ be a polynomial with
  $P(z,f(z))\not\equiv0$, and denote
  \begin{equation}
    d_x := \max(\deg_x P, 1) \qquad d_z := \max(\deg_z P, 1)
  \end{equation}
  Then
  \begin{equation}
    \sum_{i=1}^\nu \mult_{z=p_i} P(z,f_1(z),\ldots,f_n(z)) \le \alpha_f (d_z+\nu) d_x^n 
  \end{equation}
  where $\alpha_f$ is a constant dependending only on $f$ and the
  points $p_1,\ldots,p_\nu$.
\end{Thm}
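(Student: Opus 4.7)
The plan is to deduce this theorem from Theorem~\ref{thm:toric-mult-forest} applied in the ambient space $M = \C \times \C^n$, viewing $\xi$ as a polynomial vector field there. The key trick is to split the multiplicity forest at each level into components lying inside a slice $\{z = \mathrm{const}\}$ and components transverse to such slices, and bound their contributions to the total multiplicity in two qualitatively different ways.

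First I note that at any regular point of $\xi$ the D-property holds automatically with constant $1$; since $t(z)$ has only finitely many roots, all but finitely many of the $z_i$ are regular, and so we may assume a single D-property constant $\chi$ holds uniformly at all the points $p_i = (z_i, f(z_i))$. Next I apply Theorem~\ref{thm:toric-mult-forest} to $P$: since $\Delta(P) \subset d_x \Delta_x + d_z \Delta_z$, this produces a cycle forest $T_P$ whose $k$-th level satisfies
\begin{equation*}
  \tc(T_P^k) \;\le\; \bigl(L_{\beta_1(d_x \Delta_x + d_z \Delta_z)}\bigr)^k
\end{equation*}
for some constant $\beta_1$ depending only on $\xi$, and whose leaves contribute only through the $\chi$-weighted sum of their multiplicities at the $p_i$'s.

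Now for each level $k$, decompose $T_P^k = \tilde T_P^k + \hat T_P^k$, where $\tilde T_P^k$ gathers those irreducible components contained in some hyperplane $\{z = c\}$ and $\hat T_P^k$ gathers the rest. For $\tilde T_P^k$, each component lies in a unique slice $\{z = c\}$ and so contains at most one of the $p_i$; therefore
\begin{equation*}
  \sum_{i=1}^\nu \mult_{p_i} \tilde T_P^k \;\le\; \deg \tilde T_P^k,
\end{equation*}
and the degree is estimated by $\tc(\tilde T_P^k) \cdot (L_{\Delta_x + \Delta_z})^{n+1-k}$, which using the Bernstein--Kushnirenko theorem and the slice structure yields a bound of the form $\beta_2 d_z d_x^n$. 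For $\hat T_P^k$, every component intersects each hyperplane $\{z = z(p_i)\}$ properly, so $\mult_{p_i} \hat T_P^k$ is bounded by the mixed volume
\begin{equation*}
  V\bigl(\underbrace{\beta_1(d_x\Delta_x + d_z\Delta_z)}_{k \text{ times}}, \Delta_z, \underbrace{\Delta_x + \Delta_z}_{n-k \text{ times}}\bigr).
\end{equation*}
Expanding by multilinearity and using that $\Delta_z$ is one-dimensional (so any mixed volume in which $\Delta_z$ occurs twice vanishes), all surviving terms contain exactly one $\Delta_z$, giving a bound independent of $d_z$ of the form $\beta_3 d_x^n$; summing over $\nu$ points yields $\beta_3 \nu d_x^n$.

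Combining these two estimates across all levels $k = 1, \ldots, n$ and invoking the forest-based inequality from Theorem~\ref{thm:toric-mult-forest}(4) gives the desired bound
\begin{equation*}
  \sum_{i=1}^\nu \mult_{z = z_i} P(z, f(z)) \;\le\; \alpha_f (d_z + \nu) d_x^n.
\end{equation*}
The main subtlety of the argument is obtaining a per-point bound on $\hat T_P^k$ that is \emph{independent} of $d_z$; this is exactly what permits the linear (rather than multiplicative) dependence of the estimate on $\nu$, and it hinges on the one-dimensionality of $\Delta_z$ together with the genericity of the slice $\{z = z(p_i)\}$ intersection. The $\tilde T_P^k$ component, by contrast, contributes the $d_z$-dependent term but only once (not $\nu$ times) because each slice can harbor at most one $p_i$.
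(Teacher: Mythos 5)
Your proof is essentially identical to the paper's own argument in the subsection on estimates for multiple points: the same reduction to a uniform D-constant $\chi$, the same application of Theorem~\ref{thm:toric-mult-forest}, the same decomposition $T_P^k = \tilde T_P^k + \hat T_P^k$ into slice-contained and transverse components, and the same mixed-volume vanishing argument using the one-dimensionality of $\Delta_z$ to make the $\hat T_P^k$ contribution independent of $d_z$. No substantive differences.
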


\subsection{An overview of Nesterenko's approach to multiplicity estimates}
\label{sec:nest-approach}

Nesterenko's approach follows the same three-step paradigm outlined in
the begining of~\secref{sec:synopsis-pure}. We briefly sketch how each
of the steps is realized in his work, and point the reader to the
analogous results in the present paper for comparison. When possible
we have given references uniformly to \cite{nesterenko:modular} in
order to allow the reader to follow all references with fixed notations,
although many of the statements appear originally in Nesterenko's
earlier works. We refer the reader to \cite{nesterenko:modular} for
the original references.

We remark generally that in Nesterenko's approach one considers
projective ideals and proves the main results in this context. The
results in the original affine context later follow by a
projectivization argument. For simplicity we speak of \emph{unmixed
  ideals} below without qualification, and it is to be understood that
these ideals are taken to be projective ideals, and some technical
details related to projectivization are omitted.

\emph{Step 1.} The basis of Nesterenko's approach is a notion of a
multiplicity associated to an ideal, based on elimination-theoretic
ideas. Given a smooth analytic trajectory $\gamma_p$ at the point
$p\in M$, Nesterenko associates to each unmixed ideal $I$ the notion
of the \emph{order of $I$ along $\gamma_p$}, denoted
$\ord I(\gamma_p)$. The definition of this order is somewhat involved
(see \cite[Section~3]{nesterenko:modular}; cf.
Definition~\ref{def:cycle-curve-mult}). A rough idea (reinterpreted
from Nesterenko's formulation) for the construction is as follows. One
first considers the Chow form associated canonically to $I$. One then
associates to this Chow form a \emph{canonical system} of equations
for $I$, following a construction due to Chow and van der Waerden (see
\cite[3.2.C]{gkz}). The order of $I$ along $\gamma_p$ is defined to be
the minimal order of any of these canonical equations along
$\gamma_p$.

For this notion it is important that the ambient space is $\C P^n$, so
that unmixed ideals can be parametrized by Chow forms. In order to
study the case of mixed degrees, where the natural ambient space is
rather $\C P^1\times \C P^n$, Nesterenko considers it as a projective
$n$-space over the field $\C(z)$. One can then consider Chow forms
over the field $\C(z)$ and carry out the preceding construction in a
similar manner (although Nesterenko makes some technical
modifications).

In the case of $\C P^n$, each unmixed ideal has a naturally associated
\emph{degree}: its degree as a cycle in $\C P^n$. In the mixed case,
each unmixed ideal has two associated numbers: \emph{degree} and
\emph{height} (see \cite[Section~3]{nesterenko:modular}). They
essentially correspond to its two cohomological components in the
two-dimensional cohomology of $\C P^1\times\C P^n$ (with respect to
the Kunneth generators). In either case, we shall refer to these
numbers as the \emph{degrees} of $I$.

Nesterenko shows that the order of a principal ideal $\<P\>$ along
$\gamma_p$ is bounded in terms of the order of $P$ along $\gamma_p$
(see \cite[Proposition~1]{nesterenko:mult-lin3}; cf.
Proposition~\ref{prop:divisor-mult}), thus completing the first step.

\emph{Step 2.} To accomplish this step, Nesterenko uses two main
results. First, he proves (see \cite[Lemma~5.1]{nesterenko:modular};
cf. Lemma~\ref{lem:rolle-cycle}) that if $I$ is an unmixed ideal,
$P\in I$ and $Q=\xi P\not\in I$ then there is an unmixed ideal $J$
whose zeros coincide with the zeros of $I+\<Q\>$, such that:
\begin{itemize}
\item The degrees of $J$ are appropriately bounded in terms of the
  degrees of $I$ and $Q$.
\item $\ord I(\gamma_p)\le \ord J(\gamma_p)+C(I,Q)$ where $C(I,Q)$
  denotes a certain expression depending on the degrees of $I$ and
  $Q$.
\end{itemize}

Next, Nesterenko proves that for any unmixed ideal $I$ not containing
a proper $\xi$-invariant ideal, one can always choose a polynomial $P$
as above with the degrees of $P$ appropriately bounded in terms of the
degrees of $I$ (see \cite[Lemma~5.4]{nesterenko:modular}; cf.
Lemma~\ref{lem:main} and Lemma~\ref{lem:toric-main}). This lemma is
the deepest and most technical part of the proof. It has appeared in
various forms of increasing complexity in the work of Nesterenko: in
the linear case \cite{nesterenko:mult-lin3}, in the non-linear
pure-degree case \cite{nesterenko:mult-nonlinear} and finally in the
singular case with mixed degrees in \cite{nesterenko:modular}.

\begin{Rem}
  The lemma above, in addition to being the deepest part of the proof,
  also plays the dominant role in determining the size of the
  multiplicative constant $\alpha_f$ appearing in
  Theorem~\ref{thm:nest-main}. Namely, this constant grows
  doubly-exponentially with the dimension $n$. It is known from the
  work of Gabrielov \cite{gabrielov:mult} that the correct growth with
  respect to $n$ is singly-exponential, at least in the non-singular
  case.

  Our proof of this lemma follows a different approach, relying on the
  local nature of Lemma~\ref{lem:rolle-cycle} and, at one crucial
  moment, on Gabrielov's result (or the more refined form given in
  \cite{mult-morse}). This allows us to give constants growing
  singly-exponentially with $n$, and also to extend the result to the
  case of general Newton polytopes.
\end{Rem}

The combinations of these two lemmas allows one to construct from an
unmixed ideal $I$ a new unmixed ideal $J$ of smaller dimension, such
that the order of $\ord I(\gamma_p)$ is bounded in terms of
$\ord J(\gamma_p)$ and such the degrees of $J$ are bounded in terms of
the degrees of $I$ --- as long as $I$ is positive dimensional and
doesn't contain a proper $\xi$-invariant ideal. This concludes step 2.

\emph{Step 3.} Assume now that the D-property is satisfied with
constant $\chi$. The final step is accomplished by showing that if $J$
is a zero dimensional ideal or an unmixed ideal contained in a proper
$\xi$-invariant variety, then $\ord J(\gamma_p)$ is bounded by a
constant depending on $\chi$ (this is rougly
\cite[Lemma~5.3]{nesterenko:modular} formulated in the contrapositive;
cf. Proposition~\ref{prop:cycle-dist-vs-mult}).

\bibliographystyle{plain}
\bibliography{nrefs}

\end{document}